\newcounter{q3}
\newcommand{\dsm}[3]{
{\if#20{\if#31{\frac{\partial #1}{\partial y}}\else
          {\frac{\partial^{#3} #1}{\partial y^{#3}}}
        \fi}\else
  {\if#30{\if#21{\frac{\partial #1}{\partial x}}\else
            {\frac{\partial^{#2} #1}{\partial x^{#2}}}
          \fi}\else
    {\setcounter{q3}{#2}\addtocounter{q3}{#3}
    \frac{\partial{\if{1}\arabic{q3}^{}\else^{ \arabic{q3} }\fi}#1}
    {{\if#20\else{\partial x{\if#21\else{^{#2}}\fi}}\fi}
     {\if#30\else{\partial y\if#31\else{^{#3}}\fi}\fi} }}
   \fi}
\fi} }
\newcommand{\dzm}[2]{
{\if#10{\if#21{\frac{\partial}{\partial\overline{\zeta}}}\else
          {\frac{\partial^{#2}}{\partial\overline{\zeta}^{#2}}}
        \fi}\else
  {\if#20{\if#11{\frac{\partial}{\partial\zeta}}\else
            {\frac{\partial^{#1}}{\partial\zeta^{#1}}}
          \fi}\else
    {\setcounter{q3}{#1}\addtocounter{q3}{#2}
    \frac{\partial{\if{1}\arabic{q3}^{}\else^{ \arabic{q3} }\fi}}
    {{\if#10\else{\partial\zeta{\if#21\else{^{#1}}\fi}}\fi}
     {\if#20\else{\partial\overline{\zeta}\if#21\else{^{#2}}\fi}\fi} }}
   \fi}
\fi} }
\newcounter{q2}
\newtheoremstyle{theor}
  {\medskipamount}
  {\medskipamount}
  {\itshape}
  {}
  {\bfseries}
  {.}
  {.5em}
  {}
\newtheorem{definition}{Definition}[section]
\newtheorem{theorem}[definition]{Theorem}
\newtheorem{lemma}[definition]{Lemma}
\newtheorem{proposition}[definition]{Proposition}
\newtheorem{corollary}[definition]{Corollary}
\theoremstyle{definition}
\newtheorem{remark}[definition]{Remark}
\newtheorem{example}[definition]{Example}
\numberwithin{equation}{section}
\newtheoremstyle{remarks}
  {0mm}
  {0mm}
  {\itshape}
  {}
  {\itshape}
  {.}
  {.5em}
  {}
\makeatletter \@addtoreset{equation}{section} \makeatother
\begin{document}

\subsection*{\center A KIND OF LOCAL STRONG ONE-SIDE POROSITY }\begin{center}\textbf{O. Dovgoshey and V. Bilet  } \end{center}
\parshape=5
1cm 13.5cm 1cm 13.5cm 1cm 13.5cm 1cm 13.5cm 1cm 13.5cm \noindent \small {\bf Abstract.}
  We define and study the completely strongly porous at 0 subsets of $[0, \infty).$ Several characterizations of these subsets are obtained, among them the description via an universal property and structural one.

\parshape=5
1cm 13.5cm 1cm 13.5cm 1cm 13.5cm 1cm 13.5cm 1cm 13.5cm \noindent \small {\bf Key words:} one-side porosity, local strong porosity.

\parshape=2
1cm 13.5cm 1cm 13.5cm  \noindent \small {\bf }

 \bigskip
\textbf{ AMS 2010 Subject Classification: } 28A10, 28A05


\large\section {Introduction } \hspace*{\parindent}
 The basic ideas concerning the notion of set porosity for the first time appeared in some early works of Denjoy \cite{D1},  \cite{D2} and Khintchine  \cite{Kh} and then were arisen independently in the study of cluster sets in 1967 (Dol\v{z}enko \cite{Dol1}). Denjoy was interested in obtaining a classification of perfect sets on the real line in terms of the relative sizes of the complementary intervals. Khintchine had required a convenient way of describing certain arguments that use density considerations. The notion of a set of $\sigma$-porosity was defined by E.~P.~Dol\v{z}enko  \cite{Dol1}. The basic structure of porous sets and $\sigma$-porous sets has been studied in \cite{H}, \cite{HV} and \cite{Tk}. A useful collection of facts related to the notion of porosity can be found in \cite{Th}. A number of theorems exists in the theory of cluster sets which use the notion of $\sigma$-porosity (see, for example, \cite{Y},\cite{Yo1}, \cite{Yo2}, \cite{Z}). No less important is a question about relationship between porosity and dimension. In many applications the information on the dimension of certain sets is obtained via porosity. See the use of porosity, for example, in connection with free boundaries \cite{KPS} and complex dynamics \cite{PR}. Estimates of dimension in terms of porosity were obtained for a wide variety of notions of porosity (and dimension) in \cite{BS}, \cite{EJ}, \cite{KR}, \cite{OV}, \cite{M}, \cite{TN}, \cite{TR}, etc.  The porosity (in an appropriate sense) of many natural sets and measures was investigated in \cite{BS}, \cite{VC}, \cite{KR}, \cite{MU}. Moreover, the relationship between porosity and other geometric concepts such as conical densities and singular integrals was explored in \cite{VC}, \cite{AV}, \cite{M}. Porosity is also a property which is preserved, for example, under quasisymmetric maps \cite{JV}. Thereby the notion of set porosity plays an implicit role in different questions of analysis.

Many nontrivial modifications of the notion of porosity are used at present. The comparison of different definitions, and a survey of results can be found in \cite{Z3}.  Our paper is also a contribution to this line of studies and we introduce a new subclass of strongly porous at 0 subsets of $\mathbb{R^{+}}=[0, +\infty).$  

Let us recall the definition of the right porosity.  Let $E$ be a subset of $\mathbb R^{+}.$
\begin{definition}\label{D1}
The right porosity of $E$ at 0 is the quantity
\begin{equation}\label{L1}
p^{+}(E,0):=\limsup_{h\to 0^{+}}\frac{\lambda(E,0,h)}{h}
\end{equation}
where $\lambda(E,0,h)$ is the length of the largest open subinterval of $(0,h)$ that
contains no point of $E$. The set $E$ is strongly porous on the right at 0 if $p^{+}(E,0)=1.$
\end{definition}
Let $\tilde \tau=\{\tau_n\}_{n\in\mathbb N}$ be a sequence of real numbers. We shall say
that $\tilde \tau$ is \emph{almost decreasing} if the inequality $\tau_{n+1}\le\tau_{n}$ holds
for sufficiently large $n.$ Write $\tilde E_{0}^{d}$ for the set of almost decreasing
sequences $\tilde \tau$ with $\mathop{\lim}\limits_{n\to\infty}\tau_{n}=0$ and having
$\tau_{n}\in E\setminus \{0\}$ for $n\in\mathbb N.$

We use the symbols $Ext E$ and $ac E$ to denote the exterior and, respectively, the set of all accumulation points  (relative to the space $\mathbb R^{+}$ with the standard topology) of a set $E\subseteq \mathbb R^{+}.$

\begin{remark}\label{rem*}
The set $\tilde E_{0}^{d}$ is empty if and only if $0\not \in ac E.$
\end{remark}

Define $\tilde I_{E}$ to be the set of sequences $\{(a_n, b_n)\}_{n\in\mathbb
N}$ of open intervals $(a_n,b_n)\subseteq
\mathbb R^{+}$ meeting the following conditions.

\bigskip
$\bullet$ \emph{Every $a_n$ is strictly positive.}

\bigskip
$\bullet$ \emph{Every interval $(a_n, b_n)$ is a connected component of $Ext E,$ i.e., $(a_n,b_n)~\cap~E=\varnothing$ but for every
$(a,b)\supseteq(a_n,b_n)$ we have $$((a,b)\ne (a_n, b_n))\Rightarrow((a,b)\cap E \ne
\varnothing).$$}

 \emph{$\bullet$ The limit relations
$\mathop{\lim}\limits_{n\to\infty}a_{n}=0$ and
$\mathop{\lim}\limits_{n\to\infty}\frac{b_n-a_n}{b_n}=1$ hold.}

\begin{remark}\label{1.3**}
$\tilde I_{E}\ne\varnothing$ if and only if $0\in ac E$ and $p^{+}(E,0)=1.$
\end{remark}

Define also an equivalence $\asymp$ on the set of sequences of strictly positive
numbers as follows. Let $\tilde a=\{a_n\}_{n\in\mathbb N}$ and
$\tilde{\gamma}=\{\gamma_n\}_{n\in\mathbb N}.$ Then $\tilde a \asymp \tilde {\gamma}$ if
there are  constants $c_1, c_2
>0$ such that
\begin{equation}\label{equiv1}
c_1 a_n \le \gamma_n \le c_2 a_n
\end{equation}
for sufficiently large  $n\in\mathbb N.$
\begin{definition}\label{D2*}
Let $E\subseteq\mathbb R^{+}$ and $\tilde \gamma
\in \tilde E_{0}^{d}.$ The set $E$ is $\tilde \gamma$-strongly porous if there is a
sequence $\{(a_n, b_n)\}_{n\in\mathbb N}\in\tilde I_{E}$ such that
\begin{equation}\label{equiv2}
\tilde\gamma \asymp \tilde a
\end{equation}
where $\tilde a=\{a_n\}_{n\in\mathbb N}.$ The set $E$ is completely strongly porous (at 0)
if $E$ is $\tilde \gamma$-strongly porous for every $\tilde \gamma \in \tilde
E_{0}^{d}.$
\end{definition}

\begin{remark}\label{1.3*}
If $0\not\in ac E,$ then $E$ is completely strongly porous because $\tilde E_{0}^{d}=\varnothing.$
\end{remark}

In what follows the set of all completely strongly porous subsets of $\mathbb R^{+}$ will be denoted by  $\textbf{\emph{CSP}}.$

The main results of the paper can be informally described by the following way.
\newline $\bullet$ $\textbf{\emph{CSP}}$ - sets are uniformly strongly porous (Theorem~\ref{ImpTh}), in the sense that the constants in \eqref{equiv1} can be chosen independently of $\tilde\gamma\in\tilde E_{0}^{d}$ if $E\in \textbf{\emph{CSP}}.$
\newline $\bullet$ If $E\in$ $\textbf{\emph{CSP}},$ then there is an universal $\tilde L \in \tilde I_{E}$ such that every $\tilde A\in\tilde I_{E}$ is a ``subsequence'' of $\tilde L$ (Theorem~\ref{ImpTh}).
\newline $\bullet$ A description of the structure of strongly porous on the right at 0 sets $E\subseteq\mathbb R^{+}$ having an universal $\tilde L\in\tilde I_{E}$ (Theorem~\ref{Th2.24}).
\newline $\bullet$ An explicit design generating all $\textbf{\emph{CSP}}$ - sets (Theorem~\ref{descr}).

 Olli Martio's question concerning interconnections between the infinitesimal structure of a metric space $(X,d)$ at a point $p\in X$ and the porosity of the distance set $\{d(x,p): x\in X\}$ was a starting point in our studies of $\textbf{\emph{CSP}}$ - sets. Some results in this direction can be found in \cite{ADK} and \cite{BD}.


\section{The \textbf{\emph{CSP}} - sets}
\hspace*{\parindent} We start in our investigations of the $\textbf{\emph{CSP}}$ - sets from the following two examples.
\begin{example}\label{ex1}
Let $\{x_n\}_{n\in\mathbb N}$ be a strictly decreasing sequence of positive real numbers
with $\mathop{\lim}\limits_{n\to\infty}\frac{x_{n+1}}{x_{n}}=0.$ Define a set $W$ by
the rule
$$(x\in W)\Leftrightarrow (\, \mbox{either} \,\, x=0 \,\,\mbox{or there is} \,\, n\in\mathbb N \,\,\mbox{such that} \,\,
x=x_{n}).$$ Then $W$ is a closed $\textbf{\emph{CSP}}$ - set and the sequence
$\{(x_{n+1}, x_{n})\}_{n\in\mathbb N}$ belongs to $\tilde I_{W}.$
\end{example}
\begin{example}\label{ex2}
Let $q\in [1, \infty)$ and let $W$ be the set from the previous example. Write
$$W(q)=\bigcup_{x\in W}[x, qx],$$ where $[x, qx]$ is the closed interval with the
endpoints $x$ and $qx.$ Then $W(q)$ is a closed $\textbf{\emph{CSP}}$ - set. Let
$m_{0}\in\mathbb N$ be a number such that $qx_{n+1}<x_{n}$ for every $n\ge m_{0}.$ The
sequence $\{(qx_{m_{0}+n+1}, x_{m_{0}+n})\}_{n\in\mathbb N}$ belongs to $\tilde I_{W(q)}.$
\end{example}
\begin{lemma}\label{Lem2.3}
Let $E\subseteq\mathbb R^{+},$
$\tilde\gamma\in\tilde E_{0}^{d},$ $\{(a_n, b_n)\}_{n\in\mathbb N}\in\tilde I_{E}$
and let $\tilde a:=\{a_n\}_{n\in\mathbb N}.$ The weak equivalence
$\tilde\gamma\asymp\tilde a$ holds if and only if
\begin{equation}\label{L2*}
\limsup_{n\to\infty}\frac{a_n}{\gamma_n}<\infty
\end{equation} and
\begin{equation}\label{L2**}
\gamma_n \le a_n
\end{equation} for sufficiently large $n.$
\end{lemma}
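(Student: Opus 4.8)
The plan is to prove both implications directly from the definition of $\asymp$ together with the structural facts contained in the hypothesis $\{(a_n,b_n)\}\in\tilde I_E$ and $\tilde\gamma\in\tilde E_0^d$. First I would record the two pieces of information that will drive the argument: since $\tilde\gamma\in\tilde E_0^d$, every $\gamma_n$ lies in $E\setminus\{0\}$; since each $(a_n,b_n)$ is a connected component of $\operatorname{Ext}E$ with $a_n>0$, the interval $(a_n,b_n)$ contains no point of $E$, and moreover (because $(a_n,b_n)$ is a \emph{component}, not merely a subinterval, of $\operatorname{Ext}E$) the right endpoint $b_n$ satisfies $b_n\in E$ — or at least $b_n$ is a limit of points of $E$; what I actually need is the cleaner fact that no point of $E$ lies strictly between $a_n$ and $b_n$. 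The second ingredient is the defining limit relation $\frac{b_n-a_n}{b_n}\to 1$, equivalently $\frac{a_n}{b_n}\to 0$, which I will use to compare $a_n$ with $b_n$ asymptotically.

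For the ``only if'' direction, assume $\tilde\gamma\asymp\tilde a$, so there are $c_1,c_2>0$ with $c_1 a_n\le\gamma_n\le c_2 a_n$ for large $n$. The left inequality immediately gives $\limsup_n \frac{a_n}{\gamma_n}\le \frac{1}{c_1}<\infty$, which is \eqref{L2*}. For \eqref{L2**} I argue that $\gamma_n$, being a point of $E\setminus\{0\}$, cannot lie in the $E$-free interval $(a_n,b_n)$; hence either $\gamma_n\le a_n$ or $\gamma_n\ge b_n$. The second alternative would force $\gamma_n\ge b_n$, and combined with $\gamma_n\le c_2 a_n$ this yields $\frac{a_n}{b_n}\ge \frac{1}{c_2}$, contradicting $\frac{a_n}{b_n}\to 0$ once $n$ is large. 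Therefore $\gamma_n\le a_n$ for all sufficiently large $n$, which is \eqref{L2**}.

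For the ``if'' direction, assume \eqref{L2*} and \eqref{L2**}. Then \eqref{L2**} already supplies the upper bound $\gamma_n\le a_n$ (so one may take $c_2=1$), and \eqref{L2*} says $\limsup_n\frac{a_n}{\gamma_n}=:M<\infty$, so for large $n$ we have $\frac{a_n}{\gamma_n}\le M+1$, i.e.\ $\frac{1}{M+1}\,a_n\le \gamma_n$, giving the lower bound with $c_1=\frac{1}{M+1}$. Hence $\tilde\gamma\asymp\tilde a$.

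I expect the only genuinely delicate point to be the step in the ``only if'' direction where I exclude the possibility $\gamma_n\ge b_n$: this is exactly where the full strength of ``$(a_n,b_n)$ is a \emph{component} of $\operatorname{Ext}E$'' plus the porosity limit $\frac{b_n-a_n}{b_n}\to1$ enters, and care is needed to phrase it so that the conclusion $\gamma_n\le a_n$ holds for \emph{all} large $n$ simultaneously (uniformly), not just along a subsequence — this follows because the estimate $\frac{a_n}{b_n}\ge\frac1{c_2}$ would hold for infinitely many $n$ if the bad alternative occurred infinitely often, contradicting a genuine limit. Everything else is a routine unwinding of the definition of weak equivalence.
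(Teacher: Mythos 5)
Your proof is correct and follows essentially the same approach as the paper: both rely on $\gamma_n\in E\setminus\{0\}$, the emptiness of $(a_n,b_n)\cap E$, and the limit $\frac{a_n}{b_n}\to 0$ (equivalently $\frac{b_n}{a_n}\to\infty$) to rule out $\gamma_n\ge b_n$. The paper phrases this step directly (deriving $\gamma_n<b_n$ from $\frac{\gamma_n}{a_n}\le c_2$ and $\frac{b_n}{a_n}\to\infty$) while you phrase it as a contradiction, but these are the same argument; the easy direction and the derivation of \eqref{L2*} are also identical.
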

\begin{proof}
It is easily seen that \eqref{L2*} and \eqref{L2**} imply $\tilde \gamma\asymp\tilde a.$
Conversely suppose that $\tilde \gamma\asymp\tilde a.$ The membership $\{(a_n,
b_n)\}_{n\in\mathbb N}\in\tilde I_{E}$ yields $\frac{b_n}{a_n}\rightarrow\infty$
with $n\rightarrow\infty.$ Since $\tilde \gamma\asymp\tilde a,$ we
obtain $\frac{\gamma_n}{a_n}\le c_2$ for sufficiently large $n$ where $c_2$ is the constant from \eqref{equiv1}. Consequently there is
$N_0 \in \mathbb N$ such that the inequality \begin{equation}\label{L2***}\gamma_n<
b_n\end{equation} holds if $n\ge N_0.$ Since $(a_n,b_n)\cap E = \varnothing$ and
$\gamma_n \in E,$ inequality \eqref{L2***} implies \eqref{L2**}. To prove \eqref{L2*}
note that the left inequality in \eqref{equiv1} is equivalent to
$$\frac{1}{c_1}\ge\frac{a_n}{\gamma_n}$$ where $c_1$ is the constant from \eqref{equiv1}. Hence
\eqref{L2*} holds.
\end{proof}

\begin{corollary}\label{Col2.4}
Let $E\subseteq\mathbb R^{+}$
and let $\tilde\tau\in\tilde E_0^{d}.$ The set $E$ is
$\tilde\tau$-strongly porous if and only if there exists a
sequence $\{(a_n, b_n)\}_{n\in\mathbb N}\in\tilde I_{E}$ such that
$\mathop{\limsup}\limits_{n\to\infty}\frac{a_n}{\tau_n}<\infty$ and
$\tau_n\le a_n$ for sufficiently large $n.$
\end{corollary}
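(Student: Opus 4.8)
The plan is to obtain this as an immediate consequence of combining Definition~\ref{D2*} with Lemma~\ref{Lem2.3}, since the two displayed conditions in the corollary are exactly the conditions \eqref{L2*} and \eqref{L2**} of the lemma with $\tilde\gamma$ replaced by $\tilde\tau$.

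First I would unwind the definition of $\tilde\tau$-strong porosity. By Definition~\ref{D2*}, the set $E$ is $\tilde\tau$-strongly porous if and only if there exists a sequence $\{(a_n,b_n)\}_{n\in\mathbb N}\in\tilde I_E$ such that $\tilde\tau\asymp\tilde a$, where $\tilde a=\{a_n\}_{n\in\mathbb N}$. So the statement reduces to showing that, for a given $\{(a_n,b_n)\}_{n\in\mathbb N}\in\tilde I_E$, the weak equivalence $\tilde\tau\asymp\tilde a$ is equivalent to the conjunction $\limsup_{n\to\infty}\frac{a_n}{\tau_n}<\infty$ and $\tau_n\le a_n$ for all sufficiently large $n$.

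Next I would simply invoke Lemma~\ref{Lem2.3} with $\tilde\gamma:=\tilde\tau$: the hypotheses of that lemma are $E\subseteq\mathbb R^{+}$, $\tilde\tau\in\tilde E_0^{d}$, and $\{(a_n,b_n)\}_{n\in\mathbb N}\in\tilde I_E$, which are precisely the data we are given, and its conclusion is the desired equivalence. Threading this through both implications of Definition~\ref{D2*} yields the corollary. I do not expect any genuine obstacle here; the only point requiring a word of care is that the existential quantifier over $\{(a_n,b_n)\}_{n\in\mathbb N}\in\tilde I_E$ is the same in Definition~\ref{D2*} and in the statement of the corollary, so the equivalence supplied by Lemma~\ref{Lem2.3} may be applied inside the scope of that quantifier without any change of bound sequence.
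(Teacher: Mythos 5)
Your proposal is correct and is precisely the intended derivation: the paper states Corollary~\ref{Col2.4} without proof immediately after Lemma~\ref{Lem2.3}, since it follows at once by substituting Lemma~\ref{Lem2.3}'s characterization of $\tilde\tau\asymp\tilde a$ into the existential condition of Definition~\ref{D2*}. Your remark about keeping the equivalence inside the scope of the existential quantifier is exactly the right point of care, and nothing further is needed.
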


The following proposition does not have any applications in the paper but is used in \cite{BD} to describe the structure of bounded tangent spaces to general metric spaces.

\begin{proposition}\label{Pr5}
Let $E\subseteq\mathbb R^{+}$ and let
$\tilde \tau=\{\tau_n\}_{n\in\mathbb N}\in\tilde E_{0}^{d}.$ The following statements
are equivalent.
\begin{enumerate}
\item[\rm(i)]  $E$ is $\tilde\tau$-strongly porous.

\item[\rm(ii)] There is a constant $k\in (1, \infty)$ such that for every $K\in (k, \infty)$ there exists $N_{1}(K)\in \mathbb N$ such that
\begin{equation}\label{inters} (k\tau_n, K\tau_n)\cap E = \varnothing\end{equation} if $n
\ge N_1(K).$
\end{enumerate}
\end{proposition}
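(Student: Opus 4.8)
The plan is to obtain both implications from Corollary~\ref{Col2.4}, which says that $E$ is $\tilde\tau$-strongly porous if and only if some $\{(a_n,b_n)\}_{n\in\mathbb N}\in\tilde I_E$ satisfies $\limsup_{n\to\infty}\frac{a_n}{\tau_n}<\infty$ and $\tau_n\le a_n$ for all large $n$.

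For the implication $(i)\Rightarrow(ii)$ I would take such a sequence $\{(a_n,b_n)\}\in\tilde I_E$, put $L:=\limsup_{n\to\infty}\frac{a_n}{\tau_n}$, and note $L\ge 1$ since $\tau_n\le a_n$ eventually. Fix any $k\in(L,\infty)$; then $k>1$. From $\frac{b_n-a_n}{b_n}\to1$ we get $\frac{a_n}{b_n}\to0$, hence $\frac{b_n}{a_n}\to\infty$. Now let $K\in(k,\infty)$. For all sufficiently large $n$ we have simultaneously $a_n<k\tau_n$ (because $L<k$), $\tau_n\le a_n$, and $\frac{b_n}{a_n}>K$; the last two inequalities give $b_n>K a_n\ge K\tau_n$. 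Hence $(k\tau_n,K\tau_n)\subseteq(a_n,b_n)$ for $n$ large, and since $(a_n,b_n)\cap E=\varnothing$ this yields \eqref{inters}.

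For $(ii)\Rightarrow(i)$ I would fix $k$ as in (ii) and, for each large $n$, set
\[
a_n:=\sup\bigl(E\cap[0,k\tau_n]\bigr),\qquad b_n:=\inf\bigl(E\cap(k\tau_n,\infty)\bigr).
\]
Both are well defined for $n$ large: $E\cap[0,k\tau_n]\ni\tau_n$ because $k>1$, and $E\cap(k\tau_n,\infty)\ne\varnothing$ because $k\tau_n\to0$ while $\tau_1\in E\setminus\{0\}$. Clearly $0<\tau_n\le a_n\le k\tau_n\le b_n$, and applying \eqref{inters} with $K=k+1$ gives $b_n\ge(k+1)\tau_n$, so $a_n<b_n$. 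A direct check (distinguishing the cases $k\tau_n\in E$ and $k\tau_n\notin E$) shows $(a_n,b_n)\cap E=\varnothing$. Since $a_n$ and $b_n$ are a supremum and an infimum of nonempty subsets of $E$, they belong to $\overline E$; therefore no open $E$-free interval can contain $a_n$ or $b_n$, and a larger $E$-free open interval strictly containing $(a_n,b_n)$ would have to contain one of these two points, a contradiction. Hence $(a_n,b_n)$ is a connected component of $Ext E$. Applying \eqref{inters} once more, for every $K>k$ we get $b_n\ge K\tau_n$ eventually, so $\frac{b_n}{\tau_n}\to\infty$; combined with $0<\tau_n\le a_n\le k\tau_n$ this gives $\frac{a_n}{b_n}\to0$, i.e. $\frac{b_n-a_n}{b_n}\to1$, and also $a_n\to0$. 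Discarding the finitely many indices for which the construction fails and padding the initial terms with one fixed connected component of $Ext E$, we obtain an element of $\tilde I_E$ with $\limsup_{n\to\infty}\frac{a_n}{\tau_n}\le k<\infty$ and $\tau_n\le a_n$ for large $n$, and Corollary~\ref{Col2.4} gives (i).

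I expect the only genuine difficulty to be the step in $(ii)\Rightarrow(i)$ showing that the interval $(a_n,b_n)$ extracted from the one-parameter family $\{(k\tau_n,K\tau_n)\}_{K>k}$ of $E$-free intervals is actually a connected component of $Ext E$; this is precisely where the possible non-closedness of $E$ must be handled, the point being that $a_n,b_n\in\overline E$. Everything else reduces to routine manipulation of $\limsup$'s and of the defining conditions of $\tilde I_E$.
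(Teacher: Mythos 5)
Your proof is correct and follows essentially the same route as the paper: both directions reduce to Corollary~\ref{Col2.4}, the forward direction by promoting $\limsup a_n/\tau_n$ to a constant $k$ and using $b_n/a_n\to\infty$, and the reverse by taking for $(a_n,b_n)$ the connected component of $Ext\,E$ around $k\tau_n$ and reading off $\tau_n\le a_n\le k\tau_n$ and $b_n/a_n\to\infty$ from \eqref{inters}. The only difference is presentational: where the paper simply invokes the existence of a component containing $(k\tau_n,2k\tau_n)$, you realize it explicitly as $a_n=\sup(E\cap[0,k\tau_n])$, $b_n=\inf(E\cap(k\tau_n,\infty))$ and verify the endpoints lie in $\overline E$, which is a slightly more self-contained way to establish the same fact.
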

\begin{proof}
Suppose that $E$ is $\tilde\tau$-strongly porous. By Corollary~\ref{Col2.4} there
is a sequence
\begin{equation}\label{int}
\{(a_n, b_n)\}_{n\in\mathbb N}\in\tilde I_{E}
\end{equation} such that $\mathop{\limsup}\limits_{n\to\infty}\frac{a_n}{\tau_n}<\infty$
and $\tau_n \le a_n$ for sufficiently large $n.$ Write
$k=1+\mathop{\limsup}\limits_{n\to\infty}\frac{a_n}{\tau_n},$ then $\infty>k\ge 2$ and there is
$N_0 \in \mathbb N$ such that
\begin{equation}\label{L3*}
\tau_n\le a_n< k\tau_n
\end{equation} for $n\ge N_0.$
Let $K\in (k, \infty).$ Membership \eqref{int} implies the equality
$\mathop{\lim}\limits_{n\to\infty}\frac{b_n}{a_n}~=~\infty.$ The last equality and
\eqref{L3*} show that there is $N_1\ge N_0$ such that  $$a_n< k\tau_n < K\tau_n \le
b_n$$ if $n\ge N_1.$ Hence the inclusion \begin{equation}\label{L3**} (k\tau_n, K
\tau_n)\subseteq(a_n,b_n) \end{equation} holds if $n\ge N_1.$ Since
\begin{equation}\label{L3***} E\cap(a_n, b_n)=\varnothing,\end{equation}\eqref{L3**} implies \eqref{inters}. Thus (ii) follows from (i).

Conversely, assume that statement $\textrm{(ii)}$ holds. Let $K>1.$ Then for $K=2k$ there is $N_0\in\mathbb N$
such that $$(k\tau_n, 2k\tau_n)\cap E=\varnothing$$ if $n\ge N_0.$ Consequently, for
every $n\ge N_0,$ we can find a connected component $(a_n, b_n)$ of $Ext E$ meeting the
inclusion \begin{equation}\label{inql} (k\tau_n, 2k\tau_n)\subseteq (a_n, b_n).
\end{equation}
Write $(a_n, b_n)=(a_{N_0}, b_{N_0})$ for $n< N_0.$ Since, for $n\ge N_0,$ we have
$$\tau_n\in E,\, \tau_n < k\tau_n \, \, \mbox{and}\,\,(a_n, k\tau_n)\cap E=\varnothing,$$
the double inequality $\tau_n \le a_n< k\tau_n$ holds for such $n.$ To prove
$\textrm{(i)}$ it is sufficient to show that $$\{(a_n, b_n)\}_{n\in\mathbb N}\in \tilde
I_E.$$ All intervals $(a_n,b_n)$ are connected components of $Ext E$ and $\lim\limits_{n\to\infty}a_n = 0$ because $\lim\limits_{n\to\infty}\tau_n = 0,$ so that $\{(a_n, b_n)\}_{n\in\mathbb N}\in \tilde
I_E$ if and only if
\begin{equation}\label{infty} \lim_{n\to\infty}\frac{b_n}{a_n}=\infty.
\end{equation} Let $K$ be an arbitrary point of $(k, \infty).$ Applying \eqref{inters} we can find $N_1(K)\in\mathbb N$ such that $$(k\tau_n, K\tau_n)\subseteq (a_n, b_n)$$ for $n\ge N_1 (K).$
Consequently, for such $n,$ we have
$$\frac{b_n}{a_n}\ge\frac{K\tau_n}{k\tau_n}=\frac{K}{k}.$$ Letting $K\to\infty$ we see that \eqref{infty} follows.
\end{proof}
 It is clear that, if there is $\tilde\tau\in\tilde E_{0}^{d}$ such that $E$ is $\tilde\tau$-strongly porous, then $E$ is strongly porous on the right at 0. Conversely we have the following
\begin{proposition}\label{P1}
Let $E\subseteq\mathbb R^{+}$ and $0\in ac E.$
If $E$ is strongly porous on the right at 0, then there is $\tilde\tau\in\tilde
E_{0}^{d}$ for which $E$ is $\tilde\tau$-strongly porous.
\end{proposition}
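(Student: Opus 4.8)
The plan is to start from the hypothesis $p^{+}(E,0)=1$ together with $0\in ac E$ and to use Remark~\ref{1.3**} to extract a sequence $\{(a_n,b_n)\}_{n\in\mathbb N}\in\tilde I_E$. Recall this means every $a_n>0$, each $(a_n,b_n)$ is a connected component of $Ext\,E$, $a_n\to 0$, and $\frac{b_n-a_n}{b_n}\to 1$, equivalently $\frac{b_n}{a_n}\to\infty$. The natural candidate for $\tilde\tau$ is then a sequence of points of $E\setminus\{0\}$ lying just to the left of the intervals $(a_n,b_n)$: since $a_n$ is the left endpoint of a component of $Ext\,E$, the point $a_n$ itself lies in $E$ (the complement of the open exterior is closed in $\mathbb R^+$, so endpoints of exterior components that are interior to $\mathbb R^+$ belong to $E$; and $a_n>0$). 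So I would try $\tau_n:=a_n$.

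The first thing I would check is that $\tilde\tau=\{a_n\}\in\tilde E_0^d$. We have $a_n\in E$ and $a_n>0$, so $a_n\in E\setminus\{0\}$; and $a_n\to 0$. The only genuine requirement left is that $\tilde\tau$ be \emph{almost decreasing}. This need not hold for the sequence $\{a_n\}$ as originally given, so the key maneuver is to pass to a subsequence: since $a_n\to 0$, one can choose indices $n_1<n_2<\dots$ with $a_{n_{k+1}}<a_{n_k}$ for all $k$, obtaining a strictly decreasing subsequence. Passing to a subsequence of $\{(a_n,b_n)\}$ preserves all three defining conditions of $\tilde I_E$ (positivity of left endpoints, being connected components of $Ext\,E$, and the two limit relations $a_n\to 0$, $\frac{b_n-a_n}{b_n}\to 1$, since subsequences of convergent sequences converge to the same limit). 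So after relabeling I may assume $\{(a_n,b_n)\}_{n\in\mathbb N}\in\tilde I_E$ with $\{a_n\}$ strictly decreasing, and set $\tau_n:=a_n$.

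It then remains to verify that $E$ is $\tilde\tau$-strongly porous for this choice, and here I would invoke Corollary~\ref{Col2.4}: it suffices to exhibit a sequence in $\tilde I_E$ — namely $\{(a_n,b_n)\}$ itself — satisfying $\limsup_{n\to\infty}\frac{a_n}{\tau_n}<\infty$ and $\tau_n\le a_n$ for sufficiently large $n$. Both are immediate since $\tau_n=a_n$ gives $\frac{a_n}{\tau_n}=1$ and $\tau_n=a_n\le a_n$. This completes the argument.

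The only subtle point — hence the place to be careful rather than an actual obstacle — is justifying that $a_n\in E$: one must use that $(a_n,b_n)$ is a connected component of the \emph{open} set $Ext\,E=\mathbb R^+\setminus\overline{E}$... more precisely a component of $Ext\,E$ as defined in the paper (intervals disjoint from $E$ and maximal with that property). If $a_n\notin E$, then since $a_n>0$ a whole neighborhood of $a_n$ would have to be probed: there is a point of $E$ in every left neighborhood of $a_n$ by maximality of the component, and $a_n$ is an accumulation point of $E$ from the left, but $a_n\notin E$ together with $\overline{E}$ closed still forces points of $E$ arbitrarily close to $a_n$ — this does not by itself put $a_n$ in $E$. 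The clean fix is simply to replace $a_n$ by a point $\gamma_n\in E\setminus\{0\}$ with $a_n\le\gamma_n<a_n+\tfrac12(b_n-a_n)$, say, chosen because $a_n\in ac\,E$ hence there are points of $E$ in $(a_n-\varepsilon,a_n+\varepsilon)$ for all $\varepsilon>0$, and points of $E$ in $(a_n,b_n)$ are excluded, forcing such a point to satisfy $\gamma_n\le a_n$; combined with $\gamma_n>0$ this gives $\gamma_n=a_n$ after all when the component structure is used correctly, but even without that identification $\{\gamma_n\}$ (after passing to an almost decreasing subsequence as above) lies in $\tilde E_0^d$, satisfies $\gamma_n\le a_n$ and $\frac{a_n}{\gamma_n}\le\frac{a_n}{a_{n}-\varepsilon_n}\to 1$ for suitably small $\varepsilon_n$, so Corollary~\ref{Col2.4} applies. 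Thus the real content is just the subsequence trick to enforce almost-monotonicity; everything else is bookkeeping.
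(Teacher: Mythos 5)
Your overall plan is the natural one and, after your final patch, it is correct: the paper itself omits the proof, so there is nothing to compare step by step, but your final argument (take $\{(a_n,b_n)\}_{n\in\mathbb N}\in\tilde I_E$ via Remark~\ref{1.3**}, choose $\gamma_n\in E$ with $a_n-\varepsilon_n<\gamma_n\le a_n$, pass to a subsequence to make $\{\gamma_n\}$ almost decreasing, and invoke Corollary~\ref{Col2.4}) is exactly the kind of argument the authors have in mind --- they even allude to it later in the proof of Lemma~\ref{Lem2.15}.

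However, the route you take to get there contains a genuine error that you should not dismiss as merely ``subtle.'' The claim that $a_n\in E$ is simply false in general, and the justification you give --- that ``the complement of the open exterior is closed, so endpoints of exterior components $\ldots$ belong to $E$'' --- conflates $E$ with $\overline E$. The complement of $Ext\,E$ in $\mathbb R^{+}$ is $\overline E$, not $E$; maximality of $(a_n,b_n)$ only yields $a_n\in\overline E$. For a concrete counterexample take $E=\{0\}\cup\bigcup_n\bigl(\tfrac{1}{n!}-\tfrac{1}{(n!)^2},\,\tfrac{1}{n!}\bigr)$: this $E$ is strongly porous on the right at $0$, the component of $Ext\,E$ immediately to the right of $\tfrac{1}{(n+1)!}$ has left endpoint $a_n=\tfrac{1}{(n+1)!}$, and $a_n\notin E$. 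Your ``clean fix'' paragraph compounds the confusion: you first posit $\gamma_n\in E$ with $a_n\le\gamma_n<a_n+\tfrac12(b_n-a_n)$, then observe maximality forces $\gamma_n\le a_n$, and conclude $\gamma_n=a_n$ ``after all.'' What this really shows is that the interval $[a_n,a_n+\tfrac12(b_n-a_n))$ contains no point of $E$ at all when $a_n\notin E$, so that choice of $\gamma_n$ simply does not exist. Only the very last sentence --- choosing $\gamma_n\in E\cap(a_n-\varepsilon_n,a_n]$, which is nonempty because $a_n\in\overline E$ while $(a_n,b_n)\cap E=\varnothing$, picking $\varepsilon_n$ small enough (e.g.\ $\varepsilon_n<\min\{a_n/2,\,a_n-a_{n+1}\}$ after first making $\{a_n\}$ strictly decreasing) to secure positivity and monotonicity, and then verifying $\gamma_n\le a_n$ together with $a_n/\gamma_n\to 1<\infty$ --- is the correct proof, and you should present that directly rather than via the detour through the false $a_n\in E$.
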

The proof is immediate and can be omitted.
\begin{remark}
If $0\not\in ac E,$ then $E$ is strongly porous on the right at 0 but there are no $\tilde\tau \in \tilde E_{0}^{d}$ because $\tilde E_{0}^{d}=\varnothing.$
\end{remark}
\begin{definition}\label{D4}
Let $E \subseteq \mathbb R^{+}.$ The set $E$ is
uniformly strongly porous (at 0) if there exists a constant $c>0$ such that for every
$\tilde \tau\in \tilde E_0^{d}$ there is $\{(a_n,b_n)\}_{n\in\mathbb N}\in\tilde I_{E},$
satisfying the following conditions:
\item[\rm(i)]\textit{$a_n\ge\tau_n$ for sufficiently large $n\in\mathbb N;$}
\item[\rm(ii)]\textit{the inequality $$\limsup_{n\to\infty} \frac{a_n}{\tau_n}\le c$$ holds.}
\end{definition}

\begin{remark}\label{2}
If $0\not\in ac E,$ then $E$ is uniformly strongly porous since $\tilde E_{0}^{d}=\varnothing.$
\end{remark}

If $E$ is uniformly strongly porous, then $E\in \textbf{\emph{CSP}}.$
The converse is also true and we prove this in Theorem~\ref{ImpTh} giving
below.

Define, for $\tilde \tau \in \tilde E_0^{d},$ a subset $\tilde I_{E} (\tilde \tau)$ of
the set $\tilde I_E$ by the rule:
$$(\{(a_n ,b_n)\}_{n\in\mathbb N}\in\tilde I_{E}(\tilde \tau))\Leftrightarrow (\{(a_n ,b_n)\}_{n\in\mathbb N}\in\tilde I_{E}\, \mbox{and}\, \tau_n\le a_n \, \mbox{for sufficiently large}\, n\in\mathbb N).$$
Write
\begin{equation}\label{L9}
C(\tilde \tau):=\inf(\limsup_{n\to\infty}\frac{a_n}{\tau_n})\quad \mbox{and}\quad
C_E:=\sup_{\tilde \tau\in\tilde E_0^{d}}C(\tilde\tau)
\end{equation} where the infimum in the left formula is taken over all $\{(a_n ,b_n)\}_{n\in\mathbb N}\in\tilde I_{E}(\tilde \tau).$
\begin{proposition}\label{P2}
Let $E \subseteq \mathbb R^+$ and let $0\in ac E.$ The set $E$ is
strongly porous at 0 if and only if
\begin{equation}\label{L10}
\tilde I_{E}(\tilde \tau)\ne\varnothing
\end{equation}for every $\tilde \tau\in\tilde E_0^{d}.$ The set $E$ is completely strongly
porous if and only if $C(\tilde \tau)<\infty$ for every $\tilde\tau\in\tilde
E_0^{d}.$ The set $E$ is uniformly strongly porous if and only if $C_E<\infty.$
\end{proposition}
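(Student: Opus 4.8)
The plan is to prove the three equivalences one at a time, in the order they are stated, since each builds naturally on the characterizations already established in Corollary~\ref{Col2.4} and Definition~\ref{D4}. For the first equivalence, I would note that by Corollary~\ref{Col2.4} the set $E$ is $\tilde\tau$-strongly porous precisely when there is a sequence $\{(a_n,b_n)\}_{n\in\mathbb N}\in\tilde I_E$ with $\limsup_{n\to\infty}\frac{a_n}{\tau_n}<\infty$ and $\tau_n\le a_n$ for large $n$; the latter pair of conditions says exactly that $\{(a_n,b_n)\}_{n\in\mathbb N}\in\tilde I_E(\tilde\tau)$ (the finiteness of the $\limsup$ being automatic once a member of $\tilde I_E(\tilde\tau)$ is exhibited together with the $\limsup$ condition — here I should be careful: membership in $\tilde I_E(\tilde\tau)$ only requires $\tau_n\le a_n$, so I must observe that $\tilde I_E(\tilde\tau)\ne\varnothing$ already forces the existence of such a sequence, and that $\limsup a_n/\tau_n<\infty$ follows because $a_n\to 0$ is not enough — rather I invoke that $E$ being strongly porous at $0$ means $E$ is $\tilde\tau$-strongly porous for \emph{some} $\tilde\tau\in\tilde E_0^d$ by Proposition~\ref{P1}, and then leverage Proposition~\ref{Pr5} or a direct refinement argument). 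Concretely: if $\tilde I_E(\tilde\tau)\ne\varnothing$ for every $\tilde\tau$, pick any $\tilde\tau$ and any $\{(a_n,b_n)\}\in\tilde I_E(\tilde\tau)$; since $b_n/a_n\to\infty$, the sequence $\tilde\tau$ can be ``slowed down'' if necessary, but in fact $E$ is then strongly porous on the right at $0$ because the components $(a_n,b_n)$ already witness $p^+(E,0)=1$. Conversely, if $E$ is strongly porous at $0$ and $\tilde\tau\in\tilde E_0^d$, I use Proposition~\ref{P1} to get \emph{some} $\tilde\sigma\in\tilde E_0^d$ for which $E$ is $\tilde\sigma$-strongly porous, then modify the witnessing sequence from $\tilde I_E$ to dominate the given $\tilde\tau$ (replacing $a_n$ by a nearby point of $E$ above $\tau_n$, using that the gaps shrink), landing in $\tilde I_E(\tilde\tau)$.

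For the second equivalence, I would argue that $E$ is completely strongly porous iff $E$ is $\tilde\tau$-strongly porous for every $\tilde\tau\in\tilde E_0^d$ (by Definition~\ref{D2*}), and by Corollary~\ref{Col2.4} together with the definition of $C(\tilde\tau)$, the set $E$ is $\tilde\tau$-strongly porous exactly when the infimum $C(\tilde\tau)=\inf\limsup_{n\to\infty}\frac{a_n}{\tau_n}$ over $\{(a_n,b_n)\}\in\tilde I_E(\tilde\tau)$ is finite — one direction being that a $\tilde\tau$-strongly porous witness gives a sequence in $\tilde I_E(\tilde\tau)$ with finite $\limsup$, hence $C(\tilde\tau)<\infty$; the other being that $C(\tilde\tau)<\infty$ forces $\tilde I_E(\tilde\tau)\ne\varnothing$ and, by definition of infimum, the existence of a sequence in $\tilde I_E(\tilde\tau)$ with $\limsup_{n}\frac{a_n}{\tau_n}<\infty$, which by Corollary~\ref{Col2.4} means $E$ is $\tilde\tau$-strongly porous. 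Quantifying over all $\tilde\tau$ yields the stated equivalence.

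The third equivalence is then almost formal: by Definition~\ref{D4}, $E$ is uniformly strongly porous iff there is a single constant $c>0$ such that for every $\tilde\tau\in\tilde E_0^d$ some $\{(a_n,b_n)\}\in\tilde I_E$ satisfies $a_n\ge\tau_n$ for large $n$ and $\limsup_n\frac{a_n}{\tau_n}\le c$; that is, $\tilde I_E(\tilde\tau)\ne\varnothing$ and there is a member realizing $\limsup\le c$, i.e. $C(\tilde\tau)\le c$ (the infimum being attained or approximated — here I should note $C(\tilde\tau)$ is an infimum, so $C(\tilde\tau)\le c$ for all $\tilde\tau$ is equivalent to $\sup_{\tilde\tau}C(\tilde\tau)\le c<\infty$, while conversely $C_E<\infty$ lets me take $c=C_E$ after verifying that each $C(\tilde\tau)$ is nearly achieved, which may require a small $\varepsilon$-adjustment or an argument that the infimum defining $C(\tilde\tau)$ is in fact attained). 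So $E$ is uniformly strongly porous iff $C_E=\sup_{\tilde\tau\in\tilde E_0^d}C(\tilde\tau)<\infty$.

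The main obstacle I anticipate is the subtle gap between ``infimum'' and ``attained minimum'' in the definition of $C(\tilde\tau)$: Definition~\ref{D4} asks for an actual witnessing sequence with $\limsup\le c$, whereas $C(\tilde\tau)$ is only an infimum, so passing from $C_E\le c$ to a uniform witness needs care — either one shows the infimum is attained (plausible, since one can often take the ``lowest'' components of $Ext\,E$ lying above $\tau_n$), or one absorbs an $\varepsilon$ and works with $c' = c+\varepsilon$, or one reformulates uniform strong porosity with a $\limsup\le c$ that tolerates the infimum. A second, smaller technical point is the modification step in the first equivalence — replacing the left endpoints $a_n$ of given components by points of $E$ dominating the prescribed $\tau_n$ while keeping the ratio $b_n/a_n\to\infty$ — which needs the strong porosity to guarantee that such adjusted components still have unbounded ratio; this should follow from the definition of strong porosity on the right combined with the fact that $\tau_n\to 0$, but it is the place where the argument is least mechanical.
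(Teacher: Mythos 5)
The paper gives no detailed argument here; it simply asserts that the proposition ``follows directly from definitions \ref{D1}, \ref{D2*}, \ref{D4}, Corollary~\ref{Col2.4} and formulas~\eqref{L9}.'' Your task was therefore to unpack those definitions, and your unpacking is essentially correct and in the same spirit as the authors' intent. The second equivalence is exactly Corollary~\ref{Col2.4} rewritten through the definition of $C(\tilde\tau)$ (with $\inf\varnothing=+\infty$), and the third reduces to matching Definition~\ref{D4} against $\sup_{\tilde\tau}C(\tilde\tau)$. The infimum-versus-minimum worry you raise there is real but resolves exactly as you suspect: if $C_E<\infty$, take $c=C_E+1$; then $C(\tilde\tau)<c$ for every $\tilde\tau$, and any infimum strictly below $c$ is witnessed by an actual member of $\tilde I_E(\tilde\tau)$ with $\limsup_{n}a_n/\tau_n<c$. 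No attained minimum is needed.

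Your treatment of the first equivalence is more convoluted than necessary, and the phrase ``replacing $a_n$ by a nearby point of $E$ above $\tau_n$'' is not quite the right picture, since the new $a'_n$ must again be left endpoints of connected components of $Ext\,E$. You do not need Proposition~\ref{P1} at all. Under the standing hypothesis $0\in ac\,E$, strong porosity at $0$ is equivalent to $\tilde I_E\ne\varnothing$ by Remark~\ref{1.3**}, which gives the $(\Leftarrow)$ direction immediately. For $(\Rightarrow)$, fix any $\{(a_n,b_n)\}_{n\in\mathbb N}\in\tilde I_E$ and any $\tilde\tau\in\tilde E_0^d$, and reindex: set $n(k)=\max\{n\in\mathbb N: a_n\ge\tau_k\}$. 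The set is finite because $a_n\to 0$; it is nonempty for all large $k$ because $\tau_k\to 0$ while $a_1>0$; it is eventually nondecreasing in $k$ because $\tilde\tau$ is almost decreasing; and it is unbounded, since if $n(k)\le M$ along an infinite set of $k$ one would get $a_{M+1}<\tau_k$ for those $k$ and hence $a_{M+1}=0$, a contradiction. Thus $n(k)\to\infty$, so $a_{n(k)}\to 0$ and $b_{n(k)}/a_{n(k)}\to\infty$, and $\{(a_{n(k)},b_{n(k)})\}_k\in\tilde I_E(\tilde\tau)$. This is precisely the step you flag as ``least mechanical,'' and it closes cleanly without appealing to $\tilde\sigma$-strong porosity or Proposition~\ref{Pr5}.
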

The proof follows directly from definitions \ref{D1}, \ref{D2*},
\ref{D4}, Corollary~\ref{Col2.4} and formulas~\eqref{L9}.
\begin{lemma}\label{Lem2.9}
Let $E\subseteq\mathbb R^{+}.$ If $\tilde
\tau=\{\tau_n\}_{n\in\mathbb N}\in\tilde E_{0}^{d} \quad \mbox{and} \quad \{(a_n, b_n)\}_{n\in\mathbb
N}\in\tilde I_{E}$ are sequences such that $\tilde a\asymp \tilde \tau,$ then $\tilde a$
and $\tilde b$ are almost decreasing.
\end{lemma}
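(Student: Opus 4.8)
The plan is to show first that $\tilde{b}$ is almost decreasing, and then deduce the same for $\tilde{a}$. For $\tilde{b}$: since the intervals $(a_n,b_n)$ are connected components of $Ext\,E$, each point $b_n$ either lies in $E$ or is $+\infty$-type endpoint; in any case $b_n \in \overline{E}$ (it is a boundary point of a complementary interval), and more importantly $[b_n, \infty)$ meets $E$ arbitrarily close to $b_n$ from the right whenever $b_n$ is finite — indeed $b_n$ is an accumulation point of $E$ from the right or $b_n \in E$. The key observation is that for distinct indices $m \neq n$ the open intervals $(a_m,b_m)$ and $(a_n,b_n)$, being distinct connected components of the open set $Ext\,E$, are disjoint; hence they are linearly ordered by their position on $\mathbb{R}^+$. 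So if $b_m \le b_n$ for some pair, then actually $b_m \le a_n$ (the whole component $(a_m,b_m)$ sits to the left of $(a_n,b_n)$), and conversely.

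Next I would use the hypothesis $\tilde{a} \asymp \tilde{\tau}$ together with the fact that $\tilde\tau$ is almost decreasing (as an element of $\tilde E_0^{d}$) and $\tau_n \to 0$. From $\tilde a \asymp \tilde\tau$ we get constants $c_1,c_2>0$ with $c_1\tau_n \le a_n \le c_2\tau_n$ for large $n$. I also want to invoke Lemma~\ref{Lem2.3} (applicable since $\tilde a \asymp \tilde\tau$), which gives $\limsup_{n\to\infty}\frac{a_n}{\tau_n}<\infty$ and $\tau_n \le a_n$ for large $n$; combined with $\frac{b_n}{a_n}\to\infty$ (which holds because $\{(a_n,b_n)\}\in\tilde I_E$), this yields $b_n \ge a_n \ge \tau_n$ and $a_n \le c_2\tau_n$ eventually. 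Now fix a large $N$. For $m,n \ge N$ with $m<n$, suppose toward contradiction that $a_n > a_m$. Since the components are disjoint and linearly ordered, $a_n > a_m$ forces $a_n \ge b_m$, so $\frac{b_m}{a_m} \le \frac{a_n}{a_m} \le \frac{c_2\tau_n}{\tau_m}$. But $\tilde\tau$ is almost decreasing, so for $m<n$ large we have $\tau_n \le \tau_m$, whence $\frac{b_m}{a_m}\le c_2$. This contradicts $\frac{b_m}{a_m}\to\infty$ once $m$ is large enough. Therefore $a_n \le a_m$ whenever $n>m \ge N$; that is, $\tilde a$ is almost decreasing.

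Finally, for $\tilde b$: with $m,n \ge N$, $m<n$, we just showed $a_n \le a_m$; I claim $b_n \le b_m$ as well. If not, $b_n > b_m$; combined with disjointness and ordering of components this would put $(a_m,b_m)$ entirely to the left of $(a_n,b_n)$, giving $b_m \le a_n \le a_m$, contradicting $a_m < b_m$. (Here one uses that $m\neq n$ implies the components are genuinely distinct, so they cannot overlap.) Hence $b_n \le b_m$ for $n > m \ge N$, and $\tilde b$ is almost decreasing too.

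The main obstacle I anticipate is making rigorous the "linearly ordered, hence $a_n \ge b_m$ or $b_n \le a_m$" dichotomy for two distinct components of the open set $Ext\,E$ — one must be careful that the two indices really give two distinct components (a priori the sequence could repeat a component, e.g. along a subsequence where $b_n/a_n$ fails to blow up, but $b_n/a_n\to\infty$ rules out infinite repetition, and for the almost-decreasing conclusion finitely many coincidences are harmless). Once that dichotomy is set up cleanly, the rest is the short quantitative argument above using $b_n/a_n \to \infty$ against the bounded ratio $c_2\tau_n/\tau_m \le c_2$.
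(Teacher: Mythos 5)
Your argument is correct and essentially matches the paper's proof: both use the disjointness of the connected components to turn an increase $a_n > a_m$ into $a_n \ge b_m$, and then play the bounded ratio $a_n/\tau_n$ (Lemma~\ref{Lem2.3}) together with the almost decrease of $\tilde\tau$ against the divergence $b_n/a_n\to\infty$. The paper argues by contradiction along an infinite set of consecutive indices $n, n+1$ and leaves the deduction about $\tilde b$ implicit behind ``it suffices to show $\tilde a$ is almost decreasing,'' whereas you argue directly for arbitrary pairs $m<n$ and spell out the $\tilde b$ step via the same disjointness observation, but the core idea is identical.
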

\begin{proof}
 It suffices to show that $\tilde a$ is almost decreasing. If $\tilde a$ is not almost decreasing, then there is an infinite $A\subseteq\mathbb N$
such that \begin{equation}\label{X1} a_{n+1}>a_n
\end{equation} for every $n\in A.$ Since $(a_n, b_n)\cap E=\varnothing,$ inequality
\eqref{X1} implies that $a_{n+1}\ge b_{n}>a_n.$ By Lemma \ref{Lem2.3} we have $a_n
\ge\tau_n$ for sufficiently large $n.$ In addition, for such $n,$ we may suppose also
$\tau_n\ge\tau_{n+1}$ because $\tilde\tau$ is almost increasing. Consequently, we obtain
\begin{equation}\label{X2}a_{n+1}\ge b_n>a_n\ge\tau_{n}\ge\tau_{n+1} \end{equation} for
sufficiently large $n\in A.$ Inequalities \eqref{X2} imply
$$\frac{b_n}{a_n}\le\frac{a_{n+1}}{\tau_{n+1}}.$$ Hence
$$\infty=\lim_{n\to\infty, n\in A}\frac{b_n}{a_n}\le\limsup_{n\to\infty, n\in A}\frac{a_{n+1}}{\tau_{n+1}}\le\limsup_{n\to\infty}\frac{a_{n+1}}{\tau_{n+1}},$$
contrary to Lemma~\ref{Lem2.3}.
\end{proof}
\begin{proposition}\label{Pr2.10}
Let $E\subseteq\mathbb R^{+},$ $\tilde \tau\in\tilde
E_{0}^{d},$ and let $\{(a_n^{(1)},b_n^{(1)})\}_{n\in\mathbb N},$
$\{(a_n^{(2)},b_n^{(2)})\}_{n\in\mathbb N}$ be two sequences belonging to $\tilde
I_{E}.$ If $\tilde a^{1}\asymp\tilde\tau$ and $\tilde a^{2}\asymp\tilde\tau,$ where
$\tilde a^{i}:=\{a_n^{(i)}\}_{n\in\mathbb N}, \, i=1,2,$ then there is $N_0\in\mathbb N$
such that
\begin{equation}\label{L11}
(a_{n}^{(2)},b_{n}^{(2)} )=(a_{n}^{(1)}, b_{n}^{(1)})
\end{equation}
for every $n\ge N_0.$
\end{proposition}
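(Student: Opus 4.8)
The plan is to show that the two sequences of intervals eventually coincide by comparing each $(a_n^{(1)}, b_n^{(1)})$ with $(a_n^{(2)}, b_n^{(2)})$ for large $n$ and using the fact that both left endpoints are comparable to $\tau_n$ while both right-to-left ratios blow up. First I would invoke Lemma~\ref{Lem2.3} on each of the two sequences: since $\tilde a^{1}\asymp\tilde\tau$ and $\tilde a^{2}\asymp\tilde\tau$, there are constants $c_1^{(i)}, c_2^{(i)}>0$ with $c_1^{(i)}\tau_n\le a_n^{(i)}\le c_2^{(i)}\tau_n$ for sufficiently large $n$, and moreover $\tau_n\le a_n^{(i)}$ eventually. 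Combining the two chains of inequalities gives constants $C_1, C_2>0$ such that $C_1 a_n^{(1)}\le a_n^{(2)}\le C_2 a_n^{(1)}$ for all large $n$, i.e. $\tilde a^{1}\asymp\tilde a^{2}$.

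Next I would use the defining property of $\tilde I_E$ that $\lim_{n\to\infty}\frac{b_n^{(i)}-a_n^{(i)}}{b_n^{(i)}}=1$, which (as already noted repeatedly in the excerpt, e.g. in the proof of Lemma~\ref{Lem2.3}) is equivalent to $\frac{b_n^{(i)}}{a_n^{(i)}}\to\infty$. So for large $n$ we have $b_n^{(2)} > C_2 a_n^{(1)} \ge a_n^{(2)}$ and likewise $b_n^{(1)} > a_n^{(1)}$ with a huge ratio. The key geometric observation is this: $a_n^{(1)}\in E$ (since $a_n^{(1)}$ is the left endpoint of a connected component of $Ext\,E$ with $a_n^{(1)}>0$, so $a_n^{(1)}\in ac E\subseteq \overline{E}$; more carefully, a left endpoint of a component of the open set $Ext\,E$ cannot itself lie in $Ext\,E$, hence lies in $E$), and similarly $a_n^{(2)}\in E$. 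Now suppose for contradiction that $(a_n^{(1)},b_n^{(1)})\ne (a_n^{(2)},b_n^{(2)})$ for infinitely many $n$. For such $n$, from $a_n^{(1)}\asymp a_n^{(2)}$ and both ratios $b_n^{(i)}/a_n^{(i)}\to\infty$ we get, for $n$ large, that the smaller of the two left endpoints is strictly less than the larger of the two right endpoints — so the two intervals overlap. Two distinct connected components of $Ext\,E$ are disjoint; hence one interval must be contained in the other, say $(a_n^{(1)},b_n^{(1)})\subsetneq (a_n^{(2)},b_n^{(2)})$. But then $a_n^{(1)}\in (a_n^{(2)},b_n^{(2)})\subseteq Ext\,E$, contradicting $a_n^{(1)}\in E$. (And the symmetric inclusion contradicts $a_n^{(2)}\in E$.) Since distinct components are either disjoint or nested, and overlap forces nesting, we conclude the intervals cannot be distinct for infinitely many $n$; therefore \eqref{L11} holds for all $n\ge N_0$ with a suitable $N_0$.

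The main obstacle, and the step needing the most care, is the quantitative argument that $\tilde a^{1}\asymp\tilde a^{2}$ together with the divergence of the two ratios forces the intervals to overlap for large $n$: one must check that the left endpoints are close on the scale of $\tau_n$ (within a bounded multiplicative factor $C_2$) while each interval extends multiplicatively far beyond its own left endpoint, so that whichever interval has the smaller left endpoint reaches past the larger left endpoint — which lies well inside that interval's right endpoint once $b_n^{(i)}/a_n^{(i)}$ exceeds $C_2$. Everything else (membership $a_n^{(i)}\in E$, disjointness/nesting of components of an open set) is topological bookkeeping. I expect the write-up to argue: fix $N_0$ beyond which all the ``for sufficiently large $n$'' clauses hold and beyond which $\min\bigl(b_n^{(1)}/a_n^{(1)},\, b_n^{(2)}/a_n^{(2)}\bigr) > C_2$; then for $n\ge N_0$ the intervals overlap, hence are nested, hence equal.
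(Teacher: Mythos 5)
Your proof is correct and takes a genuinely different route from the paper's. Both arguments begin by using Lemma~\ref{Lem2.3} to deduce $\tilde a^1 \asymp \tilde a^2$; they diverge afterwards. The paper passes to the closure $E^{1}$ of $E$, notes that each $a_n^{(i)}$ lies in $E^{1}$ (so that, together with Lemma~\ref{Lem2.9}, $\tilde a^{i}\in\tilde E_{0}^{1d}$), and applies Lemma~\ref{Lem2.3} to $E^{1}$ symmetrically --- once with $\tilde a^{1}$ in the role of $\tilde\gamma$ and $\tilde a^{2}$ as the sequence of left endpoints, once with the roles exchanged --- to obtain $a_n^{(1)}\le a_n^{(2)}$ and $a_n^{(2)}\le a_n^{(1)}$ for large $n$. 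You argue directly and geometrically: once $b_n^{(i)}/a_n^{(i)}$ exceeds the comparability constant, whichever interval has the smaller left endpoint reaches past the other's left endpoint, so the two open intervals overlap, and two overlapping connected components of the open set $Ext E$ must coincide. Your route avoids the closure trick and the second invocation of Lemma~\ref{Lem2.3}; it is arguably the more transparent of the two.

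Two small imprecisions, neither fatal. First, the left endpoint $a_n^{(1)}$ of a component of $Ext E$ satisfies $a_n^{(1)}\notin Ext E$, which gives only $a_n^{(1)}\in\overline{E}$, not $a_n^{(1)}\in E$ (nor is it automatic that $a_n^{(1)}\in ac E$). This does not hurt you: the contradiction $a_n^{(1)}\in(a_n^{(2)},b_n^{(2)})\subseteq Ext E$ is equally incompatible with $a_n^{(1)}\in\overline{E}$, since $Ext E\cap\overline{E}=\varnothing$. Second, the sentence ``Two distinct connected components of $Ext E$ are disjoint; hence one interval must be contained in the other'' is a non sequitur --- disjointness of distinct components is precisely what rules out overlap, rather than forcing nesting. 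The clean statement you need is simply that two overlapping connected components of an open set coincide; this contradicts the assumed distinctness directly and renders both the nesting detour and the $a_n^{(1)}\in\overline{E}$ appeal unnecessary.
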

\begin{proof}
Let us denote by $E^{1}$ the closure of $E$ in $\mathbb R^{+}.$ Using Remark~\ref{rem*} we see that $0\in ac E^{1}$ and $\tilde\tau\in\tilde E_{0}^{1d}.$  Since the sequences $\{(a_n^{(i)},b_n^{(i)})\}_{n\in\mathbb N}, \, i=1,2,$ belong to $\tilde I_{E},$ they
also belong to $\tilde I_{E^1}.$ By Lemma~\ref{Lem2.9}, we obtain $\tilde a^{i}\in\tilde
E_{0}^{1d}, \, i=1,2.$ We also have $\tilde\tau\asymp \tilde a^1,$ and $\tilde\tau\asymp
\tilde a^2.$ Consequently the weak equivalence $\tilde a^1 \asymp \tilde a^2$ holds.
Applying Lemma~\ref{Lem2.3} we can find $N_0\in\mathbb N$ such that $a_n^{(1)}\le
a_n^{(2)}$ and $a_n^{(2)}\le a_n^{(1)}$ for $n\ge N_{0}.$ Consequently
$a_n^{(1)}=a_n^{(2)}$ for $n\ge N_0$ which implies \eqref{L11} for such $n.$
\end{proof}
Define the set $\tilde I_{E}^{d}\subseteq\tilde I_{E}$ by the rule
$$(\{(a_n,b_n)\}_{n\in\mathbb N}\in\tilde I_{E}^{d})\Leftrightarrow(\{(a_n, b_n)\}_{n\in\mathbb N}\in\tilde I_{E} \mbox{\, and \,}  \{a_n\}_{n\in\mathbb N} \mbox{\, is almost decreasing}).$$

\begin{remark}\label{rem2.13}
Let $E\subseteq\mathbb R^{+}.$ If $\{(a_n,
b_n)\}_{n\in\mathbb N}\in\tilde I_{E}^{d},$ then there are
$\tilde\tau=\{\tau_n\}_{n\in\mathbb N}\in\tilde E_{0}^{d}$ and $\tilde \beta
=\{\beta_n\}_{n\in\mathbb N}\in\tilde E_0^{d}$ such that
\begin{equation}\label{y*}\mathop{\lim}\limits_{n\to\infty}\frac{\tau_n}{a_n}=\mathop{\lim}\limits_{n\to\infty}\frac{\beta_n}{b_n}~=~1.\end{equation}
\end{remark}

\begin{definition}\label{univ}Let $\tilde A:=\{(a_n, b_n)\}_{n\in\mathbb
N}~\in~\tilde I_{E}^{d}\quad\mbox{and}\quad \tilde L:=\{(l_n,m_n)\}_{n\in\mathbb
N}~\in~\tilde I_{E}^{d}.$ We write $\tilde A\preceq\tilde L$ if there are a natural
number $N_1 =N_1(\tilde A,\tilde L)$ and a function $f: \mathbb N_{N_{1}}\rightarrow\mathbb N,$ where $\mathbb N_{N_1}:=\{N_1, N_1 +1,...\},$ such that
\begin{equation}\label{L12}a_n = l_{f(n)}\end{equation} for every $n\in\mathbb N_{N_1}.$
We say that $\tilde L\in\tilde I_{E}^{d}$ is universal if $\tilde A \preceq \tilde
L$ for every $\tilde A\in\tilde I_{E}^{d}.$ \end{definition}

The first part of Definition~\ref{univ} can be reformulated as the following.
\begin{proposition}\label{univ*} Let $\tilde A=\{(a_n, b_n)\}_{n\in\mathbb
N}$ and $\tilde L=\{(l_n,m_n)\}_{n\in\mathbb N}$ belong to $\tilde I_{E}^{d}.$ $\tilde
A\preceq \tilde L$ if and only if there are $N_1 = N_{1}(\tilde A, \tilde L))$ and $f: \mathbb
N_{N_1}\rightarrow\mathbb N$ such that $$b_n = m_{f(n)} \,\, \mbox{for}\,\, n\in\mathbb
N_{N_1}.$$
\end{proposition}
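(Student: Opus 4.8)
The plan is to reduce everything to one elementary fact about the complement of $E$: a connected component of $Ext E$ is determined by either of its endpoints. Precisely, I would first record the observation that if $(c,d)$ and $(c,d')$ are both connected components of $Ext E$, then $d=d'$, and symmetrically, if $(c,d)$ and $(c',d)$ are both connected components of $Ext E$, then $c=c'$. This is immediate from the maximality built into the definition of a connected component used in the description of $\tilde I_{E}$: assuming, say, $d<d'$, the interval $(c,d')$ strictly contains $(c,d)$, so the maximality of $(c,d)$ forces $(c,d')\cap E\ne\varnothing$, which contradicts $(c,d')\cap E=\varnothing$. The right-endpoint case is the mirror image. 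Here the hypotheses $a_n>0$ and $l_n>0$ (part of the definition of $\tilde I_{E}$) guarantee that no degeneracy at the left end $0$ of $\mathbb R^{+}$ intervenes.

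With this in hand the equivalence reformulating Definition~\ref{univ} follows, and in fact with the same pair $(N_1,f)$. If $\tilde A\preceq\tilde L$, choose $N_1$ and $f\colon\mathbb N_{N_1}\to\mathbb N$ with $a_n=l_{f(n)}$ for all $n\ge N_1$. Since $\tilde A,\tilde L\in\tilde I_{E}^{d}\subseteq\tilde I_{E}$, for each such $n$ the intervals $(a_n,b_n)$ and $(l_{f(n)},m_{f(n)})$ are connected components of $Ext E$ sharing the left endpoint $a_n=l_{f(n)}$; by the observation above they coincide, hence $b_n=m_{f(n)}$. Conversely, if $b_n=m_{f(n)}$ for $n\ge N_1$, then $(a_n,b_n)$ and $(l_{f(n)},m_{f(n)})$ are connected components of $Ext E$ with a common right endpoint, so again they coincide and $a_n=l_{f(n)}$ for $n\ge N_1$, i.e. $\tilde A\preceq\tilde L$.

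I do not expect any real obstacle: the whole content sits in the one-line remark about components, and the only point that has to be checked with a little care is precisely that two distinct connected components cannot share an endpoint — which is exactly what the maximality clause in the definition of $\tilde I_{E}$ supplies. Everything else is a direct transcription of Definition~\ref{univ}.
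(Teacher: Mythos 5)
Your proof is correct, and the paper in fact states Proposition~\ref{univ*} without any proof at all, treating it as an immediate reformulation of Definition~\ref{univ}. Your argument supplies exactly the right justification: two connected components of $Ext E$ sharing a left (or right) endpoint must coincide, which follows directly from the maximality clause in the definition of $\tilde I_E$, and then both directions of the equivalence carry over with the same pair $(N_1,f)$. One minor aside: the remark about $a_n>0$ preventing a "degeneracy at $0$" is harmless but unnecessary — the uniqueness-of-components argument works regardless of whether the shared endpoint is $0$, since two distinct components of an open set are disjoint, and here they would overlap.
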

\begin{remark}\label{refom}
The universality of $\tilde L\in\tilde I_{E}^{d}$ can be expressed in the language of arrows.
Let us denote by $Com$ the set of the connected components of $Ext E$. An element
$\tilde L\in\tilde I_{E}^{d}$ is universal if for every $\tilde A\in\tilde I_{E}^{d}$
there are $N_1 \in\mathbb N$ and $f:\mathbb N_{N_1}\rightarrow \mathbb N$ such the diagram
\begin{equation*}
\xymatrix{ \mathbb{N}_{N_1}\ar[r]^{in}
 \ar[dr]^f &\mathbb{N}\ar[r]^{\tilde{A}}& Com \\
&\mathbb{N}\ar[ur]^{\tilde{L}} }
\end{equation*}
is commutative. Here $in$ is the natural inclusion of $\mathbb
N_{N_{1}}$ in $\mathbb N,$ $in(n)=n$ for $n\in\mathbb N_{N_1}.$
\end{remark}
The following proposition describes the universal elements as the largest elements of the suitable posets.
\begin{proposition}\label{Pr2.11}
Let $E\subseteq\mathbb R^{+}$ be strongly porous on the right at 0 and let $0\in ac E.$ The relation $\preceq$ is a preorder on the set $\tilde
I_{E}^{d}.$
\end{proposition}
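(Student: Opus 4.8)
The plan is to verify directly that $\preceq$ is reflexive and transitive on $\tilde I_E^d$. Reflexivity is immediate: for any $\tilde A=\{(a_n,b_n)\}_{n\in\mathbb N}\in\tilde I_E^d$ take $N_1=1$ and $f=\mathrm{id}_{\mathbb N}$, so that $a_n=a_{f(n)}$ for every $n$, hence $\tilde A\preceq\tilde A$. The substance of the proof is transitivity, so I would spend essentially all the effort there.

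For transitivity, suppose $\tilde A=\{(a_n,b_n)\}\preceq\tilde B=\{(c_n,d_n)\}$ and $\tilde B\preceq\tilde L=\{(l_n,m_n)\}$, all in $\tilde I_E^d$. By Definition~\ref{univ} there are $N_1\in\mathbb N$ and $f:\mathbb N_{N_1}\to\mathbb N$ with $a_n=c_{f(n)}$ for $n\ge N_1$, and there are $N_2\in\mathbb N$ and $g:\mathbb N_{N_2}\to\mathbb N$ with $c_k=l_{g(k)}$ for $k\ge N_2$. The naive composition $g\circ f$ is not quite legal because the values $f(n)$ need not lie in $\mathbb N_{N_2}$, so the first step is to show that $f(n)\ge N_2$ for all sufficiently large $n$. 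This is where the almost-decreasing hypothesis enters: since $\tilde B\in\tilde I_E^d$, the sequence $\{c_k\}$ is almost decreasing and tends to $0$, so for each fixed $N_2$ only finitely many indices $k$ satisfy $c_k>c_{N_2}$; equivalently, $c_k\le c_j$ for all $k\ge j$ once $j$ is large enough. Meanwhile $a_n=c_{f(n)}\to 0$ as $n\to\infty$, and since distinct connected components of $Ext\,E$ have distinct left endpoints, $f$ takes each value only finitely often; together these force $f(n)\to\infty$, hence $f(n)\ge N_2$ for $n\ge N_3$ for some $N_3\ge N_1$. Then setting $N_1':=N_3$ and $h:=g\circ f:\mathbb N_{N_3}\to\mathbb N$ we get $a_n=c_{f(n)}=l_{g(f(n))}=l_{h(n)}$ for $n\ge N_3$, so $\tilde A\preceq\tilde L$.

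The main obstacle is precisely the argument that $f(n)\to\infty$ (equivalently that $f(n)\ge N_2$ eventually), since the definition of $\preceq$ imposes no growth condition on $f$ a priori. I expect to handle it exactly as sketched: the key observations are that $a_n=c_{f(n)}\to 0$, that the map sending a connected component of $Ext\,E$ to its left endpoint is injective (two distinct components $(a,b)$, $(a',b')$ of an open set with $a=a'$ would have to coincide), so each value of $c$ is attained for only finitely many indices of $\tilde B$ and hence $f^{-1}(\{k\})$ is finite for each $k$. A finitely-to-one map into $\mathbb N$ whose composition with a sequence tending to $0$ also tends to $0$ must itself tend to $\infty$ on a set of sufficiently large arguments — or one can phrase it directly: if $f(n_j)\le N_2$ along a subsequence, then $a_{n_j}$ ranges over the finite set $\{c_1,\dots,c_{N_2}\}$, contradicting $a_{n_j}\to 0$. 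Once this is in place the rest is bookkeeping, and reflexivity plus this transitivity argument completes the proof.
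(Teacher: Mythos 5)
Your proof is correct, and it takes a slightly different (and arguably cleaner) route to transitivity than the paper does. Where you show directly that $f(n)\ge N_2$ for all sufficiently large $n$ --- by observing that $a_n=c_{f(n)}\to 0$ while $\{c_1,\dots,c_{N_2}\}$ is a finite set of strictly positive numbers, hence bounded below away from $0$ --- the paper instead first asserts (parenthetically, using that both $\{a_n\}$ and $\{l_n\}$ are almost decreasing) that $f$ can be replaced by an \emph{increasing} function, and then concludes that $\{n:f(n)\le N_2\}$ is finite because an increasing unbounded $f$ can take values $\le N_2$ only finitely often. Your ``direct'' version avoids having to justify the existence of such an increasing replacement, which is the less trivial step in the paper's argument, so it is the tidier of the two. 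One caveat: your intermediate paragraph about the left-endpoint map being injective does not quite establish what you want. Injectivity of the map $Com\ni(a,b)\mapsto a$ only says distinct components have distinct left endpoints; it does not by itself prevent the sequence $\tilde B$ from repeating a component (and hence a value of $c$) infinitely often, nor does ``each value of $c$ attained finitely often'' directly translate into $f^{-1}(\{k\})$ being finite. That finitely-to-one detour is a red herring, but it does no harm because your final direct contradiction ($a_{n_j}\in\{c_1,\dots,c_{N_2}\}$ yet $a_{n_j}\to0$) needs none of it and is completely rigorous on its own.
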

\begin{proof}
We must show that $\preceq$ is reflexive and transitive. The reflexivity of $\preceq$ is
evident. To prove that $\preceq$ is transitive note that if $\tilde A \preceq \tilde L,$
then there is an \emph{increasing} function $f: \mathbb N_{N_1}\rightarrow \mathbb N$
such that \eqref{L12} holds. (The existence  of an increasing $f$ meeting \eqref{L12}
follows because the sequences $\{a_n\}_{n\in\mathbb N}$ and $\{l_n\}_{n\in\mathbb N}$
are almost decreasing.) Suppose that $\tilde A\preceq\tilde L$ and $\tilde
L\preceq\tilde T,$ $\tilde T=\{(t_n, p_n)\}_{n\in\mathbb N}\in\tilde I_{E}^{d}.$ Let $f:
\mathbb N_{N_1}\rightarrow\mathbb N$ and $g:\mathbb N_{N_2}\rightarrow\mathbb N$ be two
functions such that $$a_n=l_{f(n)} \quad \mbox{for} \quad n\ge N_1 \quad \mbox{and}
\quad l_n=t_{g(n)} \quad \mbox{for}\quad n\ge N_2.$$ Put $M:=\max\{n\in\mathbb N:
f(n)\le N_2\}.$ Since $f$ is increasing and unbounded, we have $M<\infty.$ Define
$$N_3:=\max\{M, N_{1}\}$$ with $N_3:=N_1$ if $\{n\in\mathbb N: f(n)\le N_2\}=\varnothing.$ Then
the inequality $N_3<\infty$ holds. In accordance with the construction, we have $f(n)\ge
N_2$ for every $n\in\mathbb N_{N_3}.$ Consequently we obtain
$$a_n=l_{f(n)}=t_{g(f(n))}$$ for such $n.$ Thus $\tilde A\preceq\tilde L$ and $\tilde
L\preceq\tilde T$ imply $\tilde A\preceq\tilde T.$ \end{proof} Using the standard facts
from the ordered sets theory we may prove that the preorder $\preceq$ generates an
equivalence $\equiv$  on $\tilde I_{E}^{d}$ if we put
\begin{equation}\label{L12*}(\tilde A\equiv \tilde T)\Leftrightarrow(\tilde
A\preceq\tilde T \,\, \mbox{and} \,\, \tilde T\preceq\tilde A).\end{equation} Moreover, if the
relations $\tilde A\equiv\tilde S$ and $\tilde L \equiv \tilde T$ hold, then $\tilde
A\preceq\tilde L$ if and only if $\tilde S\preceq\tilde T.$ Going over to the factor set induced by $\equiv$ we obtain a partially
ordered set (poset). \emph{The preordered set $(\tilde I_{E}^{d}, \preceq)$ has an
universal element if and only if this poset has the largest element.}

Let $\tilde L=\{(l_n, m_n)\}_{n\in\mathbb N}\in\tilde I_{E}^{d}$ be universal. Let us
define the quantity
\begin{equation}\label{L13}
M=M(\tilde L):=\limsup_{n\to\infty}\frac{l_n}{m_{n+1}}.
\end{equation}

We say that a sequence $\tilde a=\{a_n\}_{n\in\mathbb N},$ $a_n \in \mathbb R,$ is
\emph{almost strictly decreasing} if $a_{n+1} < a_n$ for sufficiently large $n$. Write
$\tilde I_{E}^{sd}$ for the set of $\{(a_n, b_n)\}_{n\in\mathbb N}\in\tilde I_{E}^{d}$
having almost strictly decreasing $\{a_n\}_{n\in\mathbb N}.$
\begin{theorem}\label{ImpTh}
Let $E\subseteq\mathbb R^{+}$ be strongly porous on the right at 0 and let $0\in ac E.$  The following conditions are equivalent.
\item[\rm(i)]\textit{$E$ is a \textbf{CSP} - set.}
\item[\rm(ii)]\textit{The preodered set $(\tilde I_{E}^{d}, \preceq)$ contains an universal element $\tilde L=\{(l_n, m_n)\}_{n\in\mathbb N}\in \tilde I_{E}^{sd}$ with \begin{equation}\label{L14} M(\tilde L)<\infty.\end{equation}}
\item[\rm(iii)]\textit{$E$ is uniformly strongly porous.}
\end{theorem}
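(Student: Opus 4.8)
The plan is to prove the cyclic chain $\mathrm{(i)}\Rightarrow\mathrm{(ii)}\Rightarrow\mathrm{(iii)}\Rightarrow\mathrm{(i)}$; the last implication is recorded in the text just before Definition~\ref{D4}. For $\mathrm{(ii)}\Rightarrow\mathrm{(iii)}$ I would invoke Proposition~\ref{P2}: it suffices to bound $C_E$. Given $\tilde L=\{(l_n,m_n)\}\in\tilde I_E^{sd}$ with $M:=M(\tilde L)<\infty$ and an arbitrary $\tilde\tau\in\tilde E_0^{d}$, note that since $\{l_n\}$ is almost strictly decreasing with $l_n\to 0$, for all large $n$ there is a unique index $k(n)$ with $l_{k(n)+1}<\tau_n\le l_{k(n)}$, and $k(n)\to\infty$; as $\tau_n\in E$ lies outside the component $(l_{k(n)+1},m_{k(n)+1})$ one gets $m_{k(n)+1}\le\tau_n$. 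Putting $(a_n,b_n):=(l_{k(n)},m_{k(n)})$ for large $n$ (and any admissible interval otherwise), one checks $\{(a_n,b_n)\}\in\tilde I_E(\tilde\tau)$ — here $k(n)\to\infty$ yields $a_n\to 0$ and $b_n/a_n=m_{k(n)}/l_{k(n)}\to\infty$ — and $a_n/\tau_n\le l_{k(n)}/m_{k(n)+1}\le M+1$ eventually, so $C(\tilde\tau)\le M+1$. As $\tilde\tau$ was arbitrary, $C_E<\infty$. (Universality of $\tilde L$ is not needed for this step.)

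The whole weight of the theorem falls on $\mathrm{(i)}\Rightarrow\mathrm{(ii)}$, and I would run it on one recurring device. Suppose $\tilde\tau\in\tilde E_0^{d}$ and $E$ is $\tilde\tau$-strongly porous, witnessed (via Corollary~\ref{Col2.4}) by $\{(a_n,b_n)\}\in\tilde I_E(\tilde\tau)$ with $c:=\limsup_n a_n/\tau_n<\infty$; then for large $n$ the point $a_n$ is a left endpoint of a connected component of $Ext E$ lying in $[\tau_n,(c+1)\tau_n]$. If the configuration forces $(a_n,b_n)$ to be a component whose ratio $a_n/b_n$ is bounded below by a fixed $\varepsilon>0$ for infinitely many $n$, then $b_n/a_n\le 1/\varepsilon$ infinitely often, contradicting the defining property $b_n/a_n\to\infty$ of $\tilde I_E$; hence by Proposition~\ref{P2} $E\notin\textbf{\emph{CSP}}$. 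Two remarks make this usable. First, a component endpoint squeezed between two components of $Ext E$ is an isolated point of $E$; together with the density of $E$ in its closure this guarantees that next to any component endpoint one can place a point of $E$ arbitrarily close to it, on the side where no further component intervenes. Second, since $E$ is strongly porous on the right at $0$, $0$ is a cluster value of the set $R$ of ratios $\{p/q:\,(p,q)\text{ a component of }Ext E,\ 0<p<1\}$, and for $t\in(0,1)$ the components realizing a cluster value $t$ cannot have their left endpoints bounded away from $0$ (otherwise infinitely many disjoint intervals would converge to one fixed interval), so one may always take them with endpoints tending to $0$.

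Using the device I would first show that $\mathrm{(i)}$ implies $0$ is \emph{isolated} in $R$. If not, pick cluster values $t_j\downarrow 0$, $t_j>0$, and for each $j$ infinitely many components $C^{(j,m)}$ with ratio in $(t_j/2,2t_j)$ and endpoints $\to 0$; for each put $\tau^{(j,m)}\in E$ immediately to the left of the left endpoint of $C^{(j,m)}$ with no component in between, and interleave over $(j,m)$, sorting decreasingly, to get $\tilde\tau\in\tilde E_0^{d}$. For any witnessing $\{(a_n,b_n)\}$ and each fixed $j$, at the (arbitrarily large) indices $n$ corresponding to $\tau^{(j,m)}$ the left endpoint $a_n$ is either the left endpoint of $C^{(j,m)}$, giving $b_n/a_n\approx 1/t_j$, or it lies beyond $C^{(j,m)}$, giving $a_n/\tau_n\gtrsim 1/t_j$; the first option is killed by $b_n/a_n\to\infty$, so $c\gtrsim 1/t_j$ for every $j$ and thus $c=\infty$ — impossible. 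Hence $R\cap(0,\varepsilon_0)=\varnothing$ for some $\varepsilon_0>0$; fix $\varepsilon\in(0,\varepsilon_0)$ and let $\tilde L=\{(l_n,m_n)\}$ be the components of $Ext E$ with positive left endpoint and ratio $<\varepsilon$, listed with decreasing left endpoints. This list is genuinely a decreasing-to-$0$ sequence because at most one such component can have left endpoint in any interval $[\delta,\delta/\varepsilon]$, so only finitely many have left endpoint $\ge\delta$; the ratios along $\tilde L$ have no nonzero cluster value and hence tend to $0$, so $\tilde L\in\tilde I_E^{sd}$; and $\tilde L$ is universal since any $\tilde A=\{(a_n,b_n)\}\in\tilde I_E^{d}$ has $a_n/b_n<\varepsilon$ for large $n$, so its large-index terms are among the components making up $\tilde L$.

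Finally, $M(\tilde L)<\infty$. If $l_{n_k}/m_{n_k+1}\to\infty$ along a subsequence, then the region $[m_{n_k+1},l_{n_k}]$ between the consecutive members $(l_{n_k+1},m_{n_k+1})$ and $(l_{n_k},m_{n_k})$ of $\tilde L$ has multiplicative width $\to\infty$, and by construction every component of $Ext E$ with left endpoint strictly inside this region has ratio $\ge\varepsilon$. Taking points $\tau_k\in E$ immediately to the right of $m_{n_k+1}$, assembling a $\tilde\tau\in\tilde E_0^{d}$, and applying the device: for large $n$ the left endpoint $a_n$ satisfies $m_{n_k+1}\le a_n\le(c+1)m_{n_k+1}<l_{n_k}$, so it is the left endpoint of a component of ratio $\ge\varepsilon$, forcing $b_n/a_n\le1/\varepsilon$ infinitely often — impossible. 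This gives $M(\tilde L)<\infty$ and completes $\mathrm{(i)}\Rightarrow\mathrm{(ii)}$. The step I expect to be the main obstacle is precisely the local analysis packaged into the two remarks above: verifying that the relevant components can be enumerated decreasingly (clusters of components at points $>0$, which have ratio near $1$ and can only occur at bounded index in elements of $\tilde I_E^{d}$, must be discarded) and that in each local configuration — a solid sub-arc of $\overline E$, a pair of touching components, or an isolated point of $E$ — a point of $E$ with the required position relative to the neighbouring components genuinely exists; granting this, the three steps above are routine applications of Lemma~\ref{Lem2.3}, Corollary~\ref{Col2.4} and Proposition~\ref{P2}.
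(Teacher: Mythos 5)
Your reduction $\mathrm{(i)}\Rightarrow\mathrm{(ii)}\Rightarrow\mathrm{(iii)}\Rightarrow\mathrm{(i)}$ is a legitimate and genuinely different organization: your $\mathrm{(ii)}\Rightarrow\mathrm{(iii)}$ step (the direct bound $C(\tilde\tau)\le M(\tilde L)$ uniformly in $\tilde\tau$, via the bracketing $m_{k(n)+1}\le\tau_n\le l_{k(n)}$) is correct and bypasses the paper's Lemma~\ref{Lem2.15}, and your construction of the universal element is structural (all sufficiently ``long'' components near $0$, in the spirit of Theorem~\ref{Th2.24}) rather than the paper's construction from the dyadic net sequence $\tilde u$ followed by Lemma~\ref{lem2.14} and Corollary~\ref{Col2.16}. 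The final verification of $M(\tilde L)<\infty$ by placing $\tau_k$ just above $m_{n_k+1}$ is also sound once $\tilde L$ is known to belong to $\tilde I_{E}^{sd}$.

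The genuine gap is in Step 1/Step 2 of $\mathrm{(i)}\Rightarrow\mathrm{(ii)}$, in the handling of the ratio set $R$. The asserted conclusion ``$R\cap(0,\varepsilon_0)=\varnothing$'' cannot be right: as you yourself note, strong porosity at $0$ forces ratios $p/q$ arbitrarily close to $0$, so $R$ meets every interval $(0,\varepsilon_0)$. What your contradiction device actually excludes is nonzero \emph{cluster points of the set} $R$ in $(0,\varepsilon_0)$, and that is too weak for Step 2: a single value $r\in(0,\varepsilon)$ attained by infinitely many components whose left endpoints tend to $0$ is not a cluster point of the set $R$, yet it puts infinitely many members of ratio exactly $r$ into your list $\tilde L$, so the ratios along $\tilde L$ do not tend to $0$, i.e. $m_n/l_n\not\to\infty$ and $\tilde L\notin\tilde I_{E}$ at all. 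This really occurs for $\textbf{\emph{CSP}}$ sets: with $x_{n+1}/x_n\to0$ and a fixed small $r$, the set $E=\{0\}\cup\bigcup_{n}\{x_n,\,r^{-1}x_n\}$ is completely strongly porous (e.g. by Theorem~\ref{descr} with $q>r^{-1}$), its components $(x_n,r^{-1}x_n)$ all have ratio $r$ with left endpoints tending to $0$, while the only cluster point of the set $R$ is $0$; so your Step 1 allows $\varepsilon>r$ and then ``the ratios along $\tilde L$ have no nonzero cluster value and hence tend to $0$'' fails (the constant value $r$ is a cluster value of the \emph{sequence} of ratios, not of the set). The repair is to work throughout with $R_0:=$ the set of subsequential limits of $p_k/q_k$ over sequences of components with $p_k\to0^{+}$ (so repeated values count). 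Your device proves, essentially verbatim, that $\mathrm{(i)}$ forces $R_0\cap(0,\varepsilon_0)=\varnothing$ for some $\varepsilon_0>0$ (for $t\in R_0$ the required components with ratio in $(t/2,2t)$ and left endpoints $\to0$ exist by definition, so the pigeonhole parenthesis is not even needed), and then every subsequential limit of the ratios along $\tilde L$ lies in $R_0\cap[0,\varepsilon]=\{0\}$, which is exactly what Step 2 needs. Two smaller points: restrict $\tilde L$ to components with left endpoint below a fixed threshold so that the decreasing enumeration is well defined when $E$ is unbounded; and in choosing $\tau^{(j,m)}$ you only need $\tau\in E$ with $\tau/p\to1$ (available because $p\in\overline{E}$ and $(p,q)\cap E=\varnothing$) --- the clause ``no component in between'' need not be achievable and is not used by the case analysis.
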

To prove Theorem~\ref{ImpTh} we need some additional lemmas.
\begin{lemma}\label{lem2.14}
Let $E\subseteq\mathbb R^{+}.$ If $\tilde L=\{(l_n, m_n)\}_{n\in\mathbb N}\in\tilde I_{E}^{d}$ is universal, then there is a subsequence
$\tilde L'=\{(l_{n_{k}}, m_{n_{k}})\}_{k\in\mathbb N}$ of $\tilde L$ such that $\tilde
L'$ is also universal and $\tilde L'\in\tilde
I_{E}^{sd}.$
\end{lemma}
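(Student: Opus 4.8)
The plan is to extract $\tilde L'$ from $\tilde L=\{(l_n,m_n)\}_{n\in\mathbb N}$ by discarding the indices where the sequence $\{l_n\}$ fails to strictly decrease, and to verify that what remains is still a sequence in $\tilde I_E^d$ — in fact in $\tilde I_E^{sd}$ — and is still universal. Since $\tilde L\in\tilde I_E^d$, the sequence $\{l_n\}$ is almost decreasing, so $l_{n+1}\le l_n$ for all $n\ge N$ for some $N$. The only obstruction to $\tilde L$ lying in $\tilde I_E^{sd}$ is the presence of infinitely many ``flat'' indices, i.e.\ $n$ with $l_{n+1}=l_n$. But if $l_{n+1}=l_n$ with $n\ge N$, then $(l_n,m_n)$ and $(l_{n+1},m_{n+1})$ are connected components of $\mathop{Ext}E$ sharing a left endpoint, hence they coincide; so a maximal run of equal left endpoints corresponds to a single repeated interval. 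First I would define $n_1<n_2<\dots$ to be the indices $n\ge N$ at which $l_{n+1}<l_n$ together, if necessary, with one representative index from the eventual tail, and set $\tilde L':=\{(l_{n_k},m_{n_k})\}_{k\in\mathbb N}$; equivalently, $\tilde L'$ is obtained from the tail $\{(l_n,m_n)\}_{n\ge N}$ by deleting all but the last term of each maximal block of repetitions.

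Next I would check $\tilde L'\in\tilde I_E^{sd}$. Membership in $\tilde I_E$ is immediate: every $(l_{n_k},m_{n_k})$ is a positive-left-endpoint connected component of $\mathop{Ext}E$, these are already in $\tilde L$; $\lim_k l_{n_k}=0$ because $\lim_n l_n=0$; and $\lim_k \frac{m_{n_k}-l_{n_k}}{m_{n_k}}=1$ because the corresponding subsequence of a sequence converging to $1$ still converges to $1$. Since we have thrown away all equalities, $l_{n_{k+1}}<l_{n_k}$ for all $k$, so $\{l_{n_k}\}$ is strictly decreasing, giving $\tilde L'\in\tilde I_E^{sd}\subseteq\tilde I_E^d$.

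Finally I would prove universality of $\tilde L'$. Let $\tilde A=\{(a_n,b_n)\}_{n\in\mathbb N}\in\tilde I_E^d$ be arbitrary. By universality of $\tilde L$ there are $N_1$ and $f:\mathbb N_{N_1}\to\mathbb N$ with $a_n=l_{f(n)}$ for $n\ge N_1$. Enlarging $N_1$ we may assume $f(n)\ge N$ for all $n\ge N_1$. For each $n\ge N_1$, the index $f(n)$ lies in some maximal block of repeated left endpoints; let $g(n)$ be the index $n_k$ representing that block (the largest index in the block). Then $l_{g(n)}=l_{f(n)}=a_n$, and $g(n)$ is one of the indices defining $\tilde L'$, so writing $h(n)$ for the position of $g(n)$ inside the subsequence enumeration (i.e.\ $g(n)=n_{h(n)}$) we get $a_n=l_{n_{h(n)}}$ for every $n\ge N_1$. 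This is exactly $\tilde A\preceq\tilde L'$ in the sense of Definition~\ref{univ}. As $\tilde A$ was arbitrary, $\tilde L'$ is universal.

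The only place requiring a little care — the ``main obstacle'' — is ensuring the repeated-interval blocks behave well: one must confirm that equal left endpoints among components of $\mathop{Ext}E$ force the intervals to be literally equal (so that collapsing a block loses no information about $\mathop{Com}$ and does not disturb the limit $\frac{m_n-l_n}{m_n}\to 1$), and that each block is finite so that the representative index $n_k$ is well defined — finiteness follows since $\{l_n\}$ is almost decreasing with limit $0$, hence eventually cannot stay constant forever without contradicting $l_n\to 0$. Once these observations are in place the construction of $f,g,h$ is routine bookkeeping.
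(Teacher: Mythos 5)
Your proposal is correct and follows essentially the same strategy as the paper: both arguments collapse each maximal block of indices with a common left endpoint to a single representative index, observe that equal left endpoints among components of $Ext\,E$ force the components to coincide, and transport the function from Definition~\ref{univ} through this collapse. The only cosmetic differences are that the paper selects the \emph{first} index of each block (via the recursion $n_{k+1}=\min\{n\ge n_k: l_n<l_{n_k}\}$) and deduces universality of $\tilde L'$ by first proving $\tilde L\preceq\tilde L'$ and then invoking transitivity from Proposition~\ref{Pr2.11}, whereas you select the \emph{last} index of each block and compose the maps explicitly; these choices are interchangeable.
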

\begin{proof}
We construct $\tilde L'$ by induction. Since $\{l_n\}_{n\in\mathbb N}$ is almost
decreasing, there exists $n_1 \in\mathbb N$ such that $l_{n+1}\le l_n$ for $n\ge n_1.$
The limit relation $\mathop{\lim}\limits_{n\to\infty}l_n =0$ implies that there is $n\ge
n_1$ such that $l_n < l_{n_1}.$ Write $$n_2: =\min\{n\in\mathbb N_{n_1}: l_n <
l_{n_1}\}.$$ Similarly we set
\begin{equation}\label{L15}
n_{k+1}:=\min\{n\in\mathbb N_{n_k}: l_n < l_{n_k}\}
\end{equation} for $k=2,3,4... \, .$
For every $n\ge n_1$ there is the unique $k\in\mathbb N$ such that
\begin{equation}\label{L16}
n_{k}\le n < n_{k+1}.
\end{equation}
Furthermore, the decrease of the sequence $\{l_n\}_{n\in\mathbb N_{n_1}}$ implies that
\begin{equation}\label{L17}
l_{n_{k}}=l_n
\end{equation}if $n$ satisfies \eqref{L16}. Let us define $g:\mathbb N_{n_1}\rightarrow \mathbb
N$ by the rule $g(n)=k$ where $k$ is the unique index satisfying \eqref{L16}. In fact, it was proved above that
$\tilde L \preceq \tilde L'.$ By Proposition~\ref{Pr2.11} the relation $\preceq$
is transitive. Since $\tilde L$ is universal, we have $\tilde T\preceq\tilde L$ for
every $\tilde T \in \tilde I_{E}^{d}.$ Consequently $\tilde T\preceq\tilde L'$ for every
$\tilde T\in\tilde I_{E}^{d},$ i.e., $\tilde L'$ is universal. It still remains to note
that \eqref{L15} implies the inequality $l_{n_k}>l_{n_{k+1}}$ for every $k\in\mathbb N.$
Hence $\{l_{n_k}\}_{k\in\mathbb N}$ is a strictly decreasing sequence. Thus $\tilde
L'\in\tilde I_{E}^{sd}.$
\end{proof}
\begin{remark}\label{rm2.19*}
If $\tilde L = \{(l_n, m_n)\}_{n\in\mathbb N}\in \tilde I_{E}^{sd}$ and $\tilde A =
\{(a_n, b_n)\}_{n\in\mathbb N}\in \tilde I_{E}^{sd},$  then
 \emph{$\tilde L\equiv\tilde A$ if and only if there exist $N_1,\, N_2\in \mathbb N$ such that $$(l_{n+N_{1}}, m_{n+N_{1}})=(a_{n+N_{2}}, b_{n+N_{2}})$$
for every $n\in\mathbb N,$ where $\equiv$ is defined by \eqref{L12*}.}

\bigskip We do not use this affirmation in the sequel and omit the proof here.
\end{remark}
\begin{lemma}\label{Lem2.15}
Let $E$ be a \textbf{CSP} - set. If $\tilde L=\{(l_n, m_n)\}_{n\in\mathbb N}\in\tilde
I_{E}^{sd}$ is universal,
then
\begin{equation}\label{L18}
M(\tilde L)=C(E)
\end{equation}
where the quantities $M(\tilde L)$ and $C(E)$ are defined by \eqref{L13} and \eqref{L9}
respectively.
\end{lemma}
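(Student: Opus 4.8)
The plan is to prove the two inequalities $C_E\le M(\tilde L)$ and $C_E\ge M(\tilde L)$ separately (I write $C_E=C(E)$ for the quantity in \eqref{L9}); that $M(\tilde L)<\infty$ will fall out of the second inequality together with the standing hypothesis $E\in\textbf{\emph{CSP}}$. Throughout I will use that $\tilde L\in\tilde I_{E}^{sd}$ makes $\{l_n\}$ strictly decreasing past some index $n_0$; since consecutive components of $Ext E$ satisfy $m_{n+1}\le l_n<m_n$, the sequence $\{m_n\}$ is then strictly decreasing past $n_0$ as well, and $m_{n+1}/m_n\le l_n/m_n\to 0$ because $\{(l_n,m_n)\}\in\tilde I_{E}$.

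To prove $C_E\le M(\tilde L)$, fix $\tilde\tau\in\tilde E_{0}^{d}$ and, for each sufficiently large $n$, put $k(n):=\max\{k\ge n_0:\ l_k\ge\tau_n\}$; this set is nonempty (since $\tau_n<l_{n_0}$ eventually) and finite (since $l_k\to 0$), and $l_{k(n)}\ge\tau_n>l_{k(n)+1}$. Set $(a_n,b_n):=(l_{k(n)},m_{k(n)})$ for such $n$ and let $(a_n,b_n)$ be an arbitrary fixed component of $Ext E$ on the finite exceptional set. I would then check $\{(a_n,b_n)\}\in\tilde I_{E}(\tilde\tau)$: these are components of $Ext E$ with positive left endpoints; $\tau_n\le a_n$ by construction; $l_{k(n)+1}<\tau_n\to 0$ forces $k(n)\to\infty$, hence $a_n=l_{k(n)}\to 0$; and $a_n/b_n=l_{k(n)}/m_{k(n)}\to 0$. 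Since $\tau_n\in E$, $\tau_n>l_{k(n)+1}$, and $(l_{k(n)+1},m_{k(n)+1})\cap E=\varnothing$, we get $\tau_n\ge m_{k(n)+1}$, so $a_n/\tau_n\le l_{k(n)}/m_{k(n)+1}$; letting $n\to\infty$ (so $k(n)\to\infty$) gives $\limsup_n a_n/\tau_n\le\limsup_k l_k/m_{k+1}=M(\tilde L)$. Hence $C(\tilde\tau)\le M(\tilde L)$, and as $\tilde\tau$ was arbitrary, $C_E\le M(\tilde L)$.

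For $C_E\ge M(\tilde L)$, the idea is to test the definition of $C(\tilde\tau)$ against a sequence $\tilde\tau^{*}$ lying just above the right endpoints $m_{n+1}$, so that universality of $\tilde L$ forces every competitor to be large. For each $n$ choose $\tau^{*}_n\in[m_{n+1},\,m_{n+1}(1+\tfrac1n))\cap E$; this intersection is nonempty because, $(l_{n+1},m_{n+1})$ being a maximal subinterval of $Ext E$, the set $E$ meets $[m_{n+1},m_{n+1}+\varepsilon)$ for every $\varepsilon>0$. Using $m_{n+2}/m_{n+1}\to 0$ one checks $\tilde\tau^{*}\in\tilde E_{0}^{d}$ and $\tau^{*}_n/m_{n+1}\to 1$. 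Now let $\{(a_n,b_n)\}\in\tilde I_{E}(\tilde\tau^{*})$ be arbitrary; if $\limsup_n a_n/\tau^{*}_n=\infty$ then trivially $\limsup_n a_n/\tau^{*}_n\ge M(\tilde L)$, so assume it is finite. Then $\tau^{*}_n\le a_n$ and Lemma~\ref{Lem2.3} give $\tilde a\asymp\tilde\tau^{*}$, Lemma~\ref{Lem2.9} gives $\{(a_n,b_n)\}\in\tilde I_{E}^{d}$, and universality of $\tilde L$ furnishes $N_1$ and a nondecreasing $f\colon\mathbb N_{N_1}\to\mathbb N$ with $a_n=l_{f(n)}$, $b_n=m_{f(n)}$ for $n\ge N_1$; moreover $f(n)\to\infty$ because $l_{f(n)}=a_n\to 0$. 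For large $n$ we then have $m_{n+1}\le\tau^{*}_n\le a_n=l_{f(n)}<m_{f(n)}$, so strict decrease of $\{m_k\}$ yields $f(n)\le n$, whence $a_n=l_{f(n)}\ge l_n$. Consequently
\begin{equation*}
\limsup_{n\to\infty}\frac{a_n}{\tau^{*}_n}\ \ge\ \limsup_{n\to\infty}\frac{l_n}{\tau^{*}_n}\ =\ \limsup_{n\to\infty}\Bigl(\frac{l_n}{m_{n+1}}\cdot\frac{m_{n+1}}{\tau^{*}_n}\Bigr)\ =\ M(\tilde L),
\end{equation*}
since $m_{n+1}/\tau^{*}_n\to 1$. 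Thus $C(\tilde\tau^{*})\ge M(\tilde L)$; in particular $M(\tilde L)<\infty$, because $C(\tilde\tau^{*})$ is finite by Proposition~\ref{P2}. Hence $C_E\ge M(\tilde L)$, and combining with the first part gives $M(\tilde L)=C_E$.

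The verifications that the two constructed sequences actually lie in $\tilde I_{E}(\tilde\tau)$, respectively in $\tilde E_{0}^{d}$, are routine bookkeeping. The step I expect to be the crux is the implication $f(n)\le n$ in the second part: it hinges on placing $\tilde\tau^{*}$ on the $E$-side of $m_{n+1}$ (so that $\tau^{*}_n\ge m_{n+1}$) while still keeping $\tau^{*}_n/m_{n+1}\to 1$, and on the strict — not merely eventual — decrease of $\{m_k\}$, which is exactly where the hypothesis $\tilde L\in\tilde I_{E}^{sd}$ (and not just $\tilde I_{E}^{d}$) is needed; with only an almost decreasing $\{m_k\}$ one would still obtain $f(n)\le n$ up to finitely many ties, at the cost of one extra line.
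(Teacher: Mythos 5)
Your proof is correct, and it takes a genuinely cleaner route than the paper's on one of the two halves. For $C_E\le M(\tilde L)$ the paper starts from the sequence $\tilde a\asymp\tilde\tau$ supplied by the \textbf{\emph{CSP}} hypothesis, invokes universality to write $a_n=l_{f(n)}$, and then runs a contradiction argument to show $f(n)=k(n)$ eventually (equation \eqref{L29}); you instead directly define a competitor $(a_n,b_n):=(l_{k(n)},m_{k(n)})$ from $\tilde L$, verify it lies in $\tilde I_E(\tilde\tau)$, and read off $\limsup a_n/\tau_n\le M(\tilde L)$. This bypasses both the \textbf{\emph{CSP}} assumption and universality in this direction — only $\tilde L\in\tilde I_E^{sd}$ is used — and avoids the somewhat delicate ``pass to a subsequence to make $f$ strictly increasing'' step. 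For $C_E\ge M(\tilde L)$ the two arguments choose essentially the same test sequence $\tilde\tau^*$ near the right endpoints $m_{n+1}$, but the paper again invokes \eqref{L29} to get $a_n=l_n$, whereas you deduce $f(n)\le n$ directly from $m_{n+1}\le a_n<m_{f(n)}$ and the strict decrease of $\{m_k\}$ and then only need the one-sided bound $a_n\ge l_n$. The net effect is a proof that is shorter, makes each half self-contained, and makes visible exactly where each hypothesis ($\tilde I_E^{sd}$ versus universality versus \textbf{\emph{CSP}}) is actually used; the price is a small amount of new bookkeeping (checking $k(n)\to\infty$ and the nonemptiness of $[m_{n+1},m_{n+1}(1+1/n))\cap E$, both of which you handle correctly).
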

\begin{proof}
Let $\tilde L\in\tilde I_{E}^{sd}$ be universal. We shall first prove the inequality
\begin{equation}\label{L19}
M(\tilde L)\ge C(E).
\end{equation}
Inequality \eqref{L19} holds if and only if
\begin{equation}\label{L20}
M(\tilde L)\ge C(\tilde \tau)
\end{equation} for every $\tilde \tau\in \tilde E_{0}^{d},$ where $C(\tilde \tau)$ was defined in \eqref{L9}. Let $\tilde\tau \in \tilde E_{0}^{d}. $ By condition of the lemma, $E$ is completely strongly porous. Hence there is $\{(a_n,
b_n)\}_{n\in\mathbb N}\in\tilde I_{E}$ such that $\tilde\tau\asymp\tilde a.$ By
Lemma~\ref{Lem2.3} we have the inequality
\begin{equation}\label{L21}
\limsup_{n\to\infty}\frac{a_n}{\tau_n}<\infty
\end{equation} and, for sufficiently large $n$, the inequality
\begin{equation}\label{L22}
\tau_n \le a_n.
\end{equation} Proposition~\ref{Pr2.10} and the definition of
$C(\tilde\tau)$ imply
\begin{equation}\label{L23}
C(\tilde\tau)=\limsup_{n\to\infty}\frac{a_n}{\tau_n}.
\end{equation}
Hence to prove \eqref{L20} we must show that
\begin{equation}\label{L24}
M(\tilde L)\ge\limsup_{n\to\infty}\frac{a_n}{\tau_n}.
\end{equation}
Be Lemma~\ref{Lem2.9}  we have
\begin{equation}\label{L25}
\tilde A:=\{(a_n, b_n)\}_{n\in\mathbb N}\in\tilde I_{E}^{d}.
\end{equation}
Since $\tilde L$ is universal, from
\eqref{L25} follows that $\tilde A \preceq \tilde L.$ Consequently there are $N_1 \in
\mathbb N$ and the increasing function $f: \mathbb N_{N_1}\rightarrow\mathbb N$ such that
\begin{equation}\label{L26}
a_n \ge a_{n+1} \quad \mbox{and} \quad a_n =l_{f(n)}
\end{equation} for $n\ge N_1.$ Since $\tilde L=\{(l_n, m_n)\}_{n\in\mathbb N}\in\tilde I_{E}^{sd},$ we may suppose that $\tilde l=\{l_n\}_{\in\mathbb N}$ is strictly decreasing. Replacing $\tilde\tau$ by a suitable subsequence we
may assume that $\tilde\tau$ and $ \tilde a$ are also strictly decreasing, $f$ is
strictly increasing, and that the relations
\begin{equation}\label{L27}
\tau_1 \le l_1, \quad
\lim_{n\to\infty}\frac{a_n}{\tau_n}=\limsup_{n\to\infty}\frac{a_n}{\tau_n}
\end{equation} hold. The closed intervals $[m_{n+1}, l_n], \, n=1,2,...,$ together with
the half-open interval $[m_{1}, \infty)$ form a cover of the set
$E_{0}=E\setminus\{0\},$ i.e. $$E_{0}\subseteq[m_1,
\infty)\cup\left(\bigcup_{n\in\mathbb N}[m_{n+1}, l_n]\right).$$ Since the elements of
this cover are pairwise disjoint and $\tau_1 \le l_1,$ for every $n\in\mathbb N$ there
is a unique $k(n)\in\mathbb N$ such that
\begin{equation}\label{L28}
\tau_n\in[m_{k(n)+1}, l_{k(n)}].
\end{equation} We claim that the equality
\begin{equation}\label{L29}
k(n)=f(n)
\end{equation} holds for sufficiently large $n.$ Indeed, using \eqref{L22}, \eqref{L26} and
\eqref{L28} we obtain $$\tau_n\le l_{f(n)} \quad \mbox{and} \quad \tau_n \ge
m_{k(n)+1}.$$ These inequalities and $$m_{k(n)+1}>l_{k(n)+1}>l_{k(n)+2}>l_{k(n)+3}>...
$$ imply
\begin{equation}\label{L30}
f(n)\le k(n).
\end{equation}
Suppose that the last inequality is strict for $n$ belonging to an infinite set
$A\subseteq\mathbb N,$ i.e.
\begin{equation}\label{L31}
f(n)\le k(n)-1
\end{equation} for $n\in A.$ Since $\{a_n\}_{n\in\mathbb N}\asymp\{\tau_n\}_{n\in\mathbb
N}$ and $a_n = l_{f(n)},$ we can find a constant $c \in (0,1)$ such that
\begin{equation}\label{L32}
cl_{f(n)}\le \tau_n \le l_{f(n)}
\end{equation} for sufficiently large  $n.$ From \eqref{L28}, \eqref{L30} and \eqref{L32} it follows that
\begin{equation}\label{L33}
cl_{f(n)}\le \tau_n \le l_{k(n)}\le l_{f(n)}.
\end{equation}
Since $\tilde l=\{l_n\}_{n\in\mathbb N}$ is strictly increasing and $(l_n, m_n)\cap
(l_j, m_j)=\varnothing$ if $n\ne j,$ \eqref{L31} implies that $$l_{k(n)}<m_{k(n)}\le
l_{k(n)-1}\le l_{f(n)}<m_{f(n)}.$$ These inequalities and \eqref{L33} show that
$$cl_{f(n)}\le \tau_n \le l_{k(n)}<m_{k(n)}\le l_{k(n)-1}<l_{f(n)}$$ for $n\in A.$
Consequently we have
$$\frac{1}{c}=\lim_{n\to\infty}\frac{l_{f(n)}}{cl_{f(n)}}\ge\limsup_{n\to\infty, \, n\in
A}\frac{m_{k(n)}}{l_{k(n)}},$$ contrary to the limit relation
$$\lim_{n\to\infty}\frac{m_n}{l_n}=\infty.$$ Hence the set
of $n\in\mathbb N$ meeting the condition $f(n)<k(n)$ is finite. Thus \eqref{L29} holds for sufficiently large
$n.$

Now it is easy to prove \eqref{L24}. By \eqref{L26} and \eqref{L29} we have $$a_n =
l_{f(n)}=l_{k(n)}.$$ Relation \eqref{L28} implies $\tau_n \ge m_{k(n)+1}.$
Consequently
$$\frac{a_n}{\tau_n}\le \frac{l_{k(n)}}{m_{k(n)+1}}.$$ Hence $$\limsup_{n\to\infty}\frac{a_n}{\tau_n}\le \limsup_{n\to\infty}\frac{l_{k(n)}}{m_{k(n)+1}}\le \limsup_{n\to\infty}\frac{l_n}{m_{n+1}}=M(\tilde L).$$
Inequality \eqref{L24} follows, so that \eqref{L19} is proved.

To prove the inequality
\begin{equation}\label{L34}
 M(\tilde L) \le C(E)
\end{equation} we take a sequence $\tilde\tau=\{\tau_n\}_{n\in\mathbb N}\in\tilde E_{0}^{d}$
such that \eqref{L28} holds with $k(n)=n$ and
\begin{equation}\label{L35}
\lim_{n\to\infty}\frac{m_{n+1}}{\tau_n}=1.
\end{equation}
A desirable $\tilde\tau$ can be constructed as in the proof of Proposition~\ref{P1}. The
set $E$ is $\tilde\tau$-strongly porous because $E$ is a \textbf{\emph{CSP}} - set.
Hence there is $\tilde a\asymp \tilde \tau$ such that $\{(a_n , b_n)\}_{n\in\mathbb
N}\in\tilde I_{E}^{d}.$ By Lemma~\ref{Lem2.9} the sequence $\tilde a$ is almost
decreasing. Since $\tau_n \in [m_{n+1}, l_n],$
using \eqref{L29} we obtain
$$a_n = l_n$$ for sufficiently large $n.$ From \eqref{L23} and \eqref{L35} it follows
that
\begin{equation*}
C(\tilde\tau)=\limsup_{n\to\infty}\frac{a_n}{\tau_n}=\limsup_{n\to\infty}\frac{l_n}{m_{n+1}}\frac{m_{n+1}}{\tau_n}=\limsup_{n\to\infty}\frac{l_n}{m_{n+1}}\lim_{n\to\infty}\frac{m_{n+1}}{\tau_n}
\end{equation*}
\begin{equation}\label{L36}
=\limsup_{n\to\infty}\frac{l_n}{m_{n+1}}=M(\tilde L).
\end{equation}
Since $C(E)\ge C(\tilde\tau),$ inequality \eqref{L34} follows.

To complete the proof, it suffices to observe that \eqref{L19} and \eqref{L34} imply
\eqref{L18}. \end{proof} Directly from \eqref{L36} we obtain
\begin{corollary}\label{Col2.16}
Let $E\subseteq\mathbb R^{+}$ be a \textbf{CSP} - set. If $\tilde L=\{(l_n, m_n)\}_{n\in\mathbb N}\in\tilde
I_{E}^{sd}$ is universal, then $M(\tilde
L)<\infty.$
\end{corollary}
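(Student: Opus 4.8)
The plan is to read the bound off directly from the work already carried out in the proof of Lemma~\ref{Lem2.15}. In establishing the inequality $M(\tilde L)\le C(E)$ there, one produced a concrete sequence $\tilde\tau=\{\tau_n\}_{n\in\mathbb N}\in\tilde E_0^{d}$ for which the chain \eqref{L36} holds, i.e. $C(\tilde\tau)=\limsup_{n\to\infty}\tfrac{l_n}{m_{n+1}}=M(\tilde L)$. So it is enough to know that $C(\tilde\tau)<\infty$ for this particular $\tilde\tau$.

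First I would record that, since $\tilde L\in\tilde I_{E}^{sd}\subseteq\tilde I_{E}$, the set $\tilde I_{E}$ is nonempty, so Remark~\ref{1.3**} gives $0\in ac E$ and $p^{+}(E,0)=1$; thus $E$ is strongly porous on the right at $0$ and the set-up underlying the quantity $C(\tilde\tau)$ and the construction in Lemma~\ref{Lem2.15} is legitimate. Then, because $E$ is a \textbf{\emph{CSP}} - set, Proposition~\ref{P2} guarantees $C(\tilde\tau)<\infty$ for every $\tilde\tau\in\tilde E_0^{d}$, in particular for the $\tilde\tau$ above. Combining this with \eqref{L36} yields $M(\tilde L)=C(\tilde\tau)<\infty$, which is the assertion.

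There is no real obstacle here: the corollary is essentially a one-line harvest of Lemma~\ref{Lem2.15} (indeed of the single identity \eqref{L36}) together with the elementary finiteness of each $C(\tilde\tau)$ for \textbf{\emph{CSP}} - sets recorded in Proposition~\ref{P2}. The only mild point is to make sure the $\tilde\tau$ manufactured inside the proof of Lemma~\ref{Lem2.15} really belongs to $\tilde E_0^{d}$ and that $\tilde I_{E}(\tilde\tau)\ne\varnothing$, both of which hold because $0\in ac E$ and $E$ is $\tilde\tau$-strongly porous (via Corollary~\ref{Col2.4} and the definition \eqref{L9}).
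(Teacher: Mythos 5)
Your proof is correct and follows essentially the same route as the paper: the authors state the corollary ``Directly from \eqref{L36}'' and your argument makes explicit what that means, namely that equation \eqref{L36} identifies $M(\tilde L)$ with $C(\tilde\tau)$ for the $\tilde\tau$ constructed in Lemma~\ref{Lem2.15}, and Proposition~\ref{P2} guarantees $C(\tilde\tau)<\infty$ since $E$ is a \textbf{\emph{CSP}}-set. The preliminary check that $0\in ac E$ via Remark~\ref{1.3**} is a reasonable and correct added precaution.
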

\begin{remark}\label{Rem2.17}
As has been shown in Lemma~\ref{Lem2.15} the equality $M(\tilde L)=C(E)$ holds for every
universal $\tilde L\in\tilde I_{E}^{sd}.$ Suppose that $\tilde L\in\tilde I_{E}^{d}$ is
universal but $\tilde L\not\in \tilde I_{E}^{sd}.$ Define the set $A\subseteq\mathbb N$
by the rule $$(n\in A)\Leftrightarrow (n\in\mathbb N \,\, \mbox{and} \,\,
(l_{n+1},m_{n+1})=(l_{n},m_{n})).$$ Let $\tilde L^{'}\in\tilde I_{E}^{sd}$ be the
universal element of $(\tilde I_{E}^{d},\preceq)$ constructed from $\tilde L$ as in
Lemma~\ref{lem2.14}. Using the definition of the set $A$ we obtain $$M(\tilde
L)=\limsup_{n\to\infty}\frac{l_{n+1}}{m_{n}}=\limsup_{n\to\infty, \, n\in
A}\frac{l_{n+1}}{m_{n}}\vee\limsup_{n\to\infty, \, n\in \mathbb N\setminus
A}\frac{l_{n+1}}{m_{n}}$$ $$=\limsup_{n\to\infty,\, n\in A}\frac{l_{n}}{m_{n}}\vee
M(\tilde L')=0\vee M(\tilde L')=M(\tilde L').$$ Consequently if $\tilde L, \tilde S \in
\tilde I_{E}^{d}$ are universal, then $M(\tilde L)=M(\tilde S).$ Thus condition (ii) of Theorem~\ref{ImpTh}
can be formulated by the following equivalent way.

\bigskip
\noindent $\bullet$ \emph{The set of universal elements $\tilde L\in\tilde I_{E}^{d}$ is
nonempty and the inequality $M(\tilde L)<\infty$ holds for every $\tilde L$ from this
set.}\end{remark} \emph{Proof of Theorem~\ref{ImpTh}}. $\mathrm{(i)\Rightarrow (ii)}.$
Let $E$ be a \textbf{\emph{CSP}} - set. We shall first prove that there is a
sequence $\tilde u=\{u_n\}_{n\in\mathbb N}\in\tilde E_{0}^{d}$ such that for every
$\tilde \tau~=~\{\tau_k\}_{k\in\mathbb N}\in\tilde E_{0}^{d}$ can be found an almost
increasing function $f:\mathbb N\rightarrow\mathbb N$ satisfying the relation
\begin{equation}\label{L37}
\{\tau_k\}_{k\in\mathbb N}\asymp\{u_{f(k)}\}_{k\in\mathbb N}.
\end{equation} Let us define the sequence of sets $E_j, \, j\in\mathbb N,$ by the rule
\begin{equation}\label{L38}
E_{1}:=E\cap[1, \infty), \, E_{2}:=E\cap[\frac{1}{2}, 1),...,
E_{j}:=E\cap[\frac{1}{2^{j-1}}, \frac{1}{2^{j-2}}).
\end{equation} There is the unique subsequence $\{E_{j_n}\}_{n\in\mathbb N}$ of the
sequence $\{E_{j}\}_{j\in\mathbb N}$ such that $$
E\setminus\{0\}=\bigcup_{n\in\mathbb N}E_{j_n} \quad \mbox{and} \quad
E_{j_n}\ne\varnothing$$ for every $n\in\mathbb N.$ For convenience we set $A_n: =
E_{j_n}, \, n\in\mathbb N.$ Let $\{u_n\}_{n\in\mathbb N}$ be a sequence of positive real
numbers meeting the condition $u_n \in A_n$ for every $n\in\mathbb N.$ It is clear that
$\{u_n\}_{n\in\mathbb N}\in\tilde E_{0}^{d}.$ For every
$\tilde\tau=\{\tau_k\}_{k\in\mathbb N}\in\tilde E_{0}^{d},$ define $f: \mathbb
N\rightarrow\mathbb N$ by the rule $$f(k)=n \quad \mbox{if and only if} \quad \tau_k \in A_n.$$ The
function $f$ is well-defined because
$$E\setminus\{0\} =\bigcup_{n\in\mathbb N}A_n \quad \mbox{and} \quad A_j \cap A_i=\varnothing \quad
\mbox{if} \quad i\ne j.$$ It follows directly from \eqref{L38} that
\begin{equation*}\label{L39}
\frac{1}{2}\tau_k \le u_{f(k)}\le2 \tau_k
\end{equation*} if $f(k)\ge 2.$ Moreover, since $\tilde\tau$ and $\tilde u$ are almost
decreasing and $\mathop{\lim}\limits_{n\in\mathbb N}\tau_n =0,$ the function
$f: \mathbb N\rightarrow\mathbb N$ is almost increasing and the set $\{k\in\mathbb N:
f(k)=1\}$ is finite. Consequently there are some constants $c_1, \, c_2$ such that
\begin{equation*}\label{L40}
c_2\tau_k\le u_{f(k)}\le c_1 \tau_k
\end{equation*} for all $k\in\mathbb N$. Thus \eqref{L37} holds.

Let $\{u_n\}_{n\in\mathbb N}\in\tilde E_{0}^{d}$ be the sequence constructed above. Since $E$ is a \textbf{\emph{CSP}} - set,  $E$ is $\tilde
u$-strongly porous. Hence, there is $\tilde A:=\{(a_n, b_n)\}_{n\in\mathbb N}\in\tilde
I_{E}$ such that
\begin{equation}\label{L41}
\tilde a\asymp\tilde u.
\end{equation} Lemma~\ref{Lem2.9} implies that $\tilde a$ is almost decreasing, i.e., $\tilde A\in\tilde I_{E}^{d}.$ We claim that $\tilde A$ is universal.
Indeed, as was shown  for every $\tilde \tau=\{\tau\}_{k\in\mathbb N}\in\tilde E_{0}^{d}$ there is $f: \mathbb
N\rightarrow\mathbb N$ such that \eqref{L37} holds.
The relation $\{u_n\}_{n\in\mathbb N}\asymp\{a_{n}\}_{n\in\mathbb N}$ implies that
\begin{equation}\label{L43}
\{u_{f(k)}\}_{k\in\mathbb N}\asymp\{a_{f(k)}\}_{k\in\mathbb N}.
\end{equation} Every interval $(a_{f(n)}, b_{f(n)})$ is a connected component of $Ext E$ and, in addition,
$\mathop{\lim}\limits_{n\to\infty}\frac{b_n}{a_n}=\infty$ implies
$\mathop{\lim}\limits_{k\to\infty}\frac{b_{f(k)}}{a_{f(k)}}=\infty$ because
$\mathop{\lim}\limits_{n\to\infty}f(n)=\infty.$ Consequently we obtain
\begin{equation}\label{L44}
\{(a_{f(k)}, b_{f(k)})\}_{k\in\mathbb N}\in\tilde I_{E}.
\end{equation} Moreover, since $f$ is almost increasing and $\tilde A=\{(a_n, b_n)\}_{n\in\mathbb N}\in\tilde
I_{E}^{d},$ \eqref{L44} implies
\begin{equation}\label{L45}
\{(a_{f(k)}, b_{f(k)})\}_{k\in\mathbb N}\in\tilde I_{E}^{d}.
\end{equation} From \eqref{L37} and \eqref{L43} we obtain
\begin{equation}\label{L46}
\{\tau_k\}_{k\in\mathbb N}\asymp\{a_{f(k)}\}_{k\in\mathbb N}.
\end{equation} Using \eqref{L45}, \eqref{L46} and Remark~\ref{rem2.13}, we can prove
that $\tilde L\preceq\tilde A$ for every $\tilde L\in\tilde I_{E}^{d},$ as required.

By Lemma~\ref{lem2.14} we can find an universal element $\tilde L\in\tilde I_{E}^{sd}.$
In according with Corollary~\ref{Col2.16} we have $M(\tilde L)<\infty.$ Thus condition
$\mathrm{(i)}$ implies $\mathrm{(ii)}.$

The implication $\mathrm{(iii)\Rightarrow (i)}$ is evident. Moreover, using
Lemma~\ref{Lem2.15}, we can simply verify that the implication $\mathrm{((i)\&
(ii))}\Rightarrow \mathrm{(iii)}$ is true. Consequently to complete the proof is
suffices to show that $\mathrm{(ii)}\Rightarrow \mathrm{(i)}.$ Suppose that  condition
$\mathrm{(ii)}$ holds. Let $\tilde\tau=\{\tau_n\}_{n\in\mathbb N}\in \tilde E_{0}^{d}$
and let $\tilde L=\{(l_k, m_k)\}_{k\in\mathbb N}\in\tilde I_{E}^{sd}$ be universal.
As in the proof of Lemma~\ref{Lem2.15} we may suppose that $\{l_n\}_{n\in\mathbb N}$ is a strictly
decreasing sequence and that $\tau_1 \le l_1 .$ Then for every $n\in\mathbb N$ there is
a unique $k(n)\in\mathbb N$ such that
\begin{equation}\label{L47}
m_{k(n)+1}\le\tau_{n}\le l_{k(n)},
\end{equation} (see \eqref{L28}). Double inequality \eqref{L47} implies $$\limsup_{n\to\infty}\frac{l_{k(n)}}{\tau_n}\le\limsup_{n\to\infty}\frac{l_{k(n)}}{m_{k(n)+1}}\le\limsup_{k\to\infty}\frac{l_k}{m_{k+1}}=M(\tilde L)<\infty.$$
Since $\{(l_{k(n)}, m_{k(n)})\}_{n\in\mathbb N}\in\tilde I_{E}^{d},$ the set $E$ is
$\tilde\tau$-strongly porous by Lemma~\ref{Lem2.3}. Thus condition $\mathrm{(i)}$
follows from condition $\mathrm{(ii)}.$ $\qquad \qquad \qquad \qquad \qquad \qquad
\qquad \qquad \qquad \qquad \, \Box$

\begin{remark}\label{2.23*}
Conditions (i) and (iii) from Theorem~\ref{ImpTh} are equivalent for arbitrary $E\subseteq \mathbb R^{+}.$ Indeed, if $p^{+}(E,0)<1,$ then both (i) and (iii) are evidently false. If $p^{+}(E,0)=1$ but $0\not\in ac E,$ then (i) and (iii) are true (see Remark~\ref{1.3*} and Remark~\ref{2}). In this connection it should be pointed out that condition (ii) of Theorem~\ref{ImpTh} implies $\tilde I_{E}\ne\varnothing.$ Consequently, if (ii) holds, then $0\in ac E$ and $p^{+}(E,0)=1$ (see Remark~\ref{1.3**}).
\end{remark}

The following example shows that the existence of an universal $\tilde L \in \tilde
I_{E}^{sd}$ does not imply the inequality $M(\tilde L)<\infty.$
\begin{example}\label{ex3}
Let $\{x_n\}_{n\in\mathbb N}$ be a sequence of strictly decreasing positive real numbers
such that $$\lim_{n\to\infty}\frac{x_{n+1}}{x_{n}}=0.$$ Define the set $E$ as
$\mathop\bigcup\limits_{n\in\mathbb N}[x_{2n+1}, x_{2n}].$ It follows from Lemma~\ref{Lem2.3} that $E\not\in\textbf{\emph{CSP}}$ but, as easily seen, the sequence
$\{(x_{2n}, x_{2n-1})\}_{n\in\mathbb N}~\in~\tilde I_{E}^{sd}$ is universal. (See Fig.~1.).
\end{example}

\begin{figure}[h]
\centerline{\includegraphics[scale=0.7]{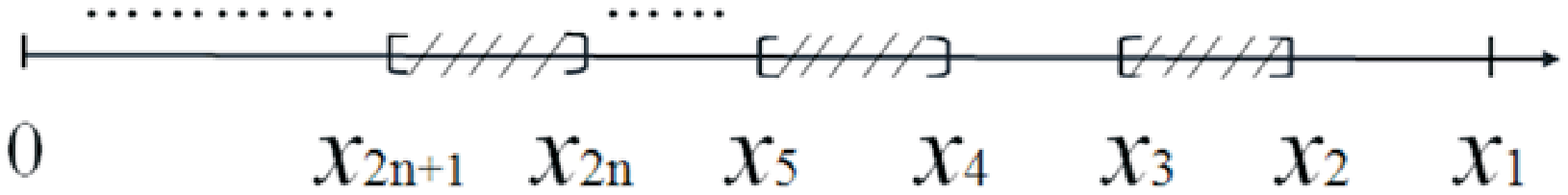}}%
\centerline{\emph{Fig. 1. The set E is shaded here}}
\end{figure}

The next theorem
describes the structure of sets $E\subseteq\mathbb R^{+}$ for which there is an universal $\tilde L\in\tilde
I_{E}^{sd}.$

As in Remark~\ref{refom} write $Com$ for the set of all connected components of $Ext E.$
\begin{theorem}\label{Th2.24} Let $E\subseteq\mathbb R^{+}$ be
strongly porous on the right at 0 and let $0 \in ac E.$ The preordered set
$(\tilde I_{E}^{d}, \preceq)$ contains an universal element if and only if there is a
constant $c>1$ such that for every $K>1$ there is $t>0$ for which the inequalities $t>a$
and $\frac{b}{a}>c$ imply the inequality $\frac{b}{a}>K$ for every $(a, b)\in Com.$
\end{theorem}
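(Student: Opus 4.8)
The plan is to prove both implications by recognizing that a universal element of $(\tilde I_E^{d},\preceq)$, when it exists, is essentially forced to be \emph{the list of all connected components $(a,b)$ of $Ext E$ that are close to $0$ (i.e.\ with $a$ below some fixed positive threshold) and whose ratio $b/a$ exceeds the constant $c$ in the statement, arranged in decreasing order of left endpoints}. Throughout I will use repeatedly that distinct components of $Ext E$ are pairwise disjoint open intervals, that $l_n\notin Ext E$ when $(l_n,m_n)$ is a component, and the elementary fact that components whose lengths are bounded away from $0$ cannot accumulate at any positive point (two such intervals with sufficiently close left endpoints would overlap). I will also use that a left-endpoint match $a_n=l_{f(n)}$ between components forces $(a_n,b_n)=(l_{f(n)},m_{f(n)})$, since two components sharing a left endpoint coincide (cf.\ Proposition~\ref{univ*}).

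For the ``if'' direction, assume the dichotomy holds with some $c>1$. Replacing the thresholds by smaller ones we get $t_m\downarrow 0$ such that $(a,b)\in Com$, $a<t_m$ and $b/a>c$ imply $b/a>m$, for every integer $m\ge 2$. Since $E$ is strongly porous on the right at $0$ and $0\in ac E$, Remark~\ref{1.3**} gives $\tilde I_E\ne\varnothing$, hence there are infinitely many components $(a,b)\in Com$ with $a<t_2$ and $b/a>c$; as these cannot accumulate at any positive point, they may be enumerated as $\tilde L=\{(l_n,m_n)\}_{n\in\mathbb N}$ with $l_1>l_2>\dots\to 0$. The dichotomy then gives $m_n/l_n\to\infty$, so $\tilde L\in\tilde I_E^{sd}$. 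Finally, if $\tilde A=\{(a_n,b_n)\}_{n\in\mathbb N}\in\tilde I_E^{d}$, then $a_n\to 0$ and $b_n/a_n\to\infty$, so for large $n$ we have $a_n<t_2$ and $b_n/a_n>c$; thus $(a_n,b_n)$ is one of the $(l_k,m_k)$, which yields $\tilde A\preceq\tilde L$. Hence $\tilde L$ is universal.

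For the ``only if'' direction, suppose $(\tilde I_E^{d},\preceq)$ has a universal element. By Lemma~\ref{lem2.14} we may fix a universal $\tilde L=\{(l_n,m_n)\}_{n\in\mathbb N}\in\tilde I_E^{sd}$ with $\{l_n\}$ strictly decreasing. The crucial step is to show there is $C_0<\infty$ such that every $(a,b)\in Com$ with $a<l_1$ and $b/a>C_0$ equals some $(l_n,m_n)$. Granting this, set $c:=\max\{C_0,2\}>1$; given $K>1$, use $m_n/l_n\to\infty$ to choose $N$ with $m_n/l_n>K$ for $n\ge N$ and put $t:=l_N>0$. If $(a,b)\in Com$, $a<t$ and $b/a>c\ge C_0$, then $(a,b)=(l_n,m_n)$ for some $n$, and $a=l_n<l_N$ together with strict monotonicity forces $n>N$, so $b/a=m_n/l_n>K$. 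This is exactly the required dichotomy.

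The main obstacle, and the only place where universality is genuinely invoked, is the crucial step. Suppose it fails: then for every $j\in\mathbb N$ there is $(\alpha_j,\beta_j)\in Com$ with $\alpha_j<l_1$, $\beta_j/\alpha_j>j$, and $(\alpha_j,\beta_j)\ne(l_n,m_n)$ for all $n$. A component of given ratio $r$ can occur in this list only for indices $j<r$, so there are infinitely many \emph{distinct} such components, and after passing to a subsequence of distinct ones we still have $\beta_j/\alpha_j\to\infty$. Since $l_1\notin Ext E$, each $(\alpha_j,\beta_j)\subseteq(0,l_1)$, so these are disjoint subintervals of a bounded interval with lengths $\beta_j-\alpha_j>\alpha_j(j-1)$; this forces $\alpha_j\to 0$, after which a further subsequence makes $\{\alpha_j\}$ strictly decreasing. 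Now $\{(\alpha_j,\beta_j)\}_{j\in\mathbb N}\in\tilde I_E^{d}$ (positive left endpoints tending to $0$, almost decreasing; ratios tending to $\infty$), so universality gives $\{(\alpha_j,\beta_j)\}\preceq\tilde L$, i.e.\ $\alpha_j=l_{f(j)}$ for all large $j$, whence $(\alpha_j,\beta_j)=(l_{f(j)},m_{f(j)})$ for those $j$ --- contradicting the choice of $(\alpha_j,\beta_j)$ as non-$\tilde L$ components. The only delicate bookkeeping is ensuring the extracted subsequence really lies in $\tilde I_E^{d}$, which is precisely guaranteed by the disjointness/length estimates used above.
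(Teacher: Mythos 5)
Your proof is correct and follows essentially the same route as the paper's: the ``if'' direction builds the universal element by enumerating, in decreasing order of left endpoints, the components near $0$ whose ratio exceeds $c$, and the ``only if'' direction argues by contradiction that if components of arbitrarily large ratio near $0$ escaped $\tilde L$, one could assemble them into an element of $\tilde I_E^{d}$ not $\preceq\tilde L$. Your explicit ``crucial step'' (a single threshold $C_0$ beyond which every nearby component must lie in $\tilde L$) is a slightly cleaner packaging of what the paper does via the negation \eqref{L50}--\eqref{L53}, but the underlying construction and the disjointness/length estimates are the same.
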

\begin{proof}
Suppose that there is an universal
element $\tilde L=\{(l_n, m_n)\}_{n\in\mathbb N}\in\tilde I_{E}^{d}.$ We must prove that
\begin{equation}\label{L48}
\exists \,\, c>1\, \forall\, K>1 \, \exists \,\, t>0 \, \forall \,(a,b) \in Com: (a<t)\&
\left(\frac{b}{a}>c\right)\Rightarrow\left(\frac{b}{a}>K\right).
\end{equation}
By Lemma~\ref{lem2.14} we may assume that $\{l_n\}_{n\in\mathbb N}$ is strictly
decreasing. Using the limit relations $$\lim_{n\to\infty}\frac{m_n}{l_n}=\infty \,\,\,
\mbox{and} \, \lim_{n\to\infty}l_{n}=0$$ and the strict decrease of
$\{l_n\}_{n\in\mathbb N}$ we obtain that
\begin{equation}\label{L49}
 \forall\, K>1 \, \exists \,\, t>0:\, \, \forall\, n\in\mathbb N\,\,  (l_n <t)\Rightarrow\left(\frac{m_n}{l_n}>K\right).
\end{equation}
If \eqref{L48} does not hold, then
\begin{equation}\label{L50}
\forall \,\, c>1\, \exists\, K=K(c)>1 \, \forall \,\, t>0 \, \exists \,(a,b) \in Com:
(t>a)\,\, \& \left(c<\frac{b}{a}\le K(c)\right).
\end{equation}
Using this formula with $c=j$ and $K=K(j),$ for $j=1,2,...,$ we see that
\begin{equation}\label{L51}
\forall \,\, t>0\,  \exists \,(a_j,b_j) \in Com: (a_j<t)\,\,  \& \left(j\le\frac{b_j}{a_j}\le
 K(j)\right).
\end{equation}
Formula \eqref{L49} implies that
\begin{equation}\label{L52}
\forall \,\, n\in\mathbb N \,\,\exists\,\, t_j >0: \, \, (l_n <t_j)\Rightarrow\left(\frac{m_n}{l_n}>K(j)\right).
\end{equation}
We can suppose also that $\mathop{\lim}\limits_{j\to\infty}t_{j}=0$ and $\{t_j\}_{j\in\mathbb N}$ is
strictly decreasing. From \eqref{L51} with $t=t_j$ it follows that
\begin{equation}\label{L53}
\forall \,\, j\in\mathbb N\,  \exists \,(a_j,b_j) \in Com: (a_j<t_j)\,\, \&
\left(j\le\frac{b_j}{a_j}\le
 K(j)\right).
\end{equation}
Consequently the sequence $\tilde A:=\{(a_j, b_j)\}_{j\in\mathbb N}$ belongs to $\tilde
I_{E}.$ Using the limit relation $\lim_{j\to\infty}t_{j}=0$ and passing on to a suitable
subsequence we can also claim that $\tilde A\in\tilde I_{E}^{d}.$ Formulas \eqref{L52}
and \eqref{L53} imply that $$(a_j, b_j)\ne (l_n, m_n)$$ for every element $(l_n, m_n)$
of $\tilde L.$ Consequently $\tilde L$ is not universal, contrary to the assumption.

Conversly, suppose that \eqref{L48} holds. Let us prove that there exists an universal
element in $\tilde I_{E}^{d}.$ Let $c$ be the constant satisfying \eqref{L48}. Define a subset $Com(c)$ of the set $Com$  by
the rule $$((a,b)\in Com(c))\Leftrightarrow \left((a,b)\in Com, \, a>1 \,\,
\mbox{and}\,\, \frac{b}{a}>c\right).$$ We
can enumerate of the intervals $(a,b)\in Com(c)$ in the sequence $$(a_1, b_1), \, (a_2,
b_2), \,..., (a_n, b_n), \,...$$ such that $\{a_n\}_{n\in\mathbb N}$ is strictly
decreasing. Condition \eqref{L48} implies that $\tilde A=\{(a_n,
b_n)\}_{n\in\mathbb N}\in\tilde I_{E}^{d}.$ The universality of $\tilde A$ follows
directly from Definition~\ref{univ} and \ref{L48}.
\end{proof}
As in Remark~\ref{2.23*} it should be noted that the existence of an universal $\tilde L\in\tilde I_{E}^{d}$ implies that $0\in ac E$ and $p^{+}(E,0)=1.$

\bigskip

\noindent \textbf{An illustrating model to Theorem~\ref{Th2.24}.} Let $E\subseteq(0,1]$
be closed and let $0\in ac E.$  Write $$W=-\ln
E:=\left\{\ln\left(\frac{1}{x}\right): x\in E \right\}.$$ We can consider $W$ as ``a photography of an one-dimensioned liquid'' with some ``gas bubbles'' $\left( \ln\left(\frac{1}{b}\right),
\ln\left(\frac{1}{a}\right) \right),$ where $(a,b)\in Com,$ which
move to $+\infty.$ Theorem~\ref{Th2.24} means that there is a critical value $\ln c$
such that if the size of gas bubbles are greater than $\ln c,$ then these bubbles
undergo an unbounded blow up during theirs motion.

\bigskip
The following simple proposition can be considered as a limit case of Theorem~\ref{Th2.24}.
\begin{proposition}\label{Pr2.27}
Let $E \subseteq\mathbb R^{+}$ and $\tilde L=\{(l_n, m_n)\}_{n\in\mathbb N}\in\tilde I_{E}^{d}.$ Suppose that for every $(a,b)\in Com$ there is $n\in\mathbb N$ such that $(a,b)=(l_n, m_n).$ Then $\tilde L$ is universal. 
\end{proposition}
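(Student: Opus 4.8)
The plan is to verify the definition of universality directly; under the stated hypothesis this amounts to little more than unwinding definitions, since $\tilde L$ already enumerates \emph{all} connected components of $Ext\, E$. First I would fix an arbitrary $\tilde A = \{(a_n, b_n)\}_{n\in\mathbb N} \in \tilde I_{E}^{d}$ and recall that, by the definition of $\tilde I_{E}$ (hence of $\tilde I_{E}^{d}$), each interval $(a_n, b_n)$ is a connected component of $Ext\, E$, i.e.\ $(a_n, b_n)\in Com$. By the hypothesis of the proposition, for every $n\in\mathbb N$ there is at least one index $k\in\mathbb N$ with $(a_n, b_n) = (l_k, m_k)$; I would set $f(n)$ equal to the smallest such $k$, which defines a function $f: \mathbb N\to\mathbb N$.

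By construction $a_n = l_{f(n)}$ for every $n\in\mathbb N$, so Definition~\ref{univ} is met with $N_1 = 1$ and this $f$, giving $\tilde A\preceq\tilde L$. Since $\tilde A$ was an arbitrary element of $\tilde I_{E}^{d}$, this shows that $\tilde L$ is universal, which is the assertion.

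I do not anticipate any genuine obstacle: Proposition~\ref{Pr2.27} is stated as the degenerate case of Theorem~\ref{Th2.24} in which the critical constant $c$ plays no role (so that $Com(c)$ can be replaced by $Com$), and the argument is essentially a tautology once the definitions are in place. The only points deserving a moment's care are that the sequence $\{l_n\}_{n\in\mathbb N}$ meets every component, so that $f$ is total, and that no nontrivial tail is needed, so that one may take $N_1 = 1$; both are immediate from the hypothesis. If desired, one could additionally arrange $f$ to be increasing (using that $\{a_n\}_{n\in\mathbb N}$ and $\{l_n\}_{n\in\mathbb N}$ are almost decreasing, as in the proof of Proposition~\ref{Pr2.11}), but this refinement is not required for the conclusion.
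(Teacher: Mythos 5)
Your argument is correct and is exactly what the paper means by ``follows directly from Definition~\ref{univ}'': since every $(a_n,b_n)$ in an arbitrary $\tilde A\in\tilde I_E^d$ is, by definition of $\tilde I_E$, a connected component of $Ext\,E$, the hypothesis hands you an index $k$ with $(a_n,b_n)=(l_k,m_k)$, and setting $f(n)=k$ (say, the smallest such) gives $a_n=l_{f(n)}$ on all of $\mathbb N$, so $\tilde A\preceq\tilde L$. Your remark that one could further make $f$ increasing is correct but, as you say, not needed here.
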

The proof follows directly from Definition~\ref{univ}.


\section{Another characterizations of \textbf{\emph{CSP}} - sets}
\hspace*{\parindent} Let $E$ be a subset of $\mathbb R^{+}.$ Define the set $\tilde
H=\tilde H(E)$ of the sequences $\tilde h=\{h_n\}_{n\in\mathbb N},$ $h_n >0,$
$\mathop{\lim}\limits_{n\to\infty}h_n = 0$ by the rule:
\begin{equation}\label{l3.1}
(\tilde h\in\tilde H)\Leftrightarrow \left ( \frac{\lambda(E,0,h_n)}{h_n}\rightarrow
p^{+}(E,0) \quad \mbox{with} \quad n\rightarrow\infty\right)
\end{equation}
where the quantities $p^{+}(E,0)$ and $\lambda(E,0, h_n)$ are the same as in
Definition~\ref{D1}.
\begin{theorem}\label{th3.1}
Let $E\subseteq\mathbb R^{+}$ be strongly porous on the right at 0. Then $E$ is a \textbf{\emph{CSP}} - set if and only if for every $\tilde \tau=\{\tau_n\}_{n\in\mathbb N}\in
\tilde E_0^{d}$ there is $\tilde h=\{h_n\}_{n\in\mathbb N}\in\tilde H(E)$ such that
$\tilde\tau\asymp\tilde h.$
\end{theorem}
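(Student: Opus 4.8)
\textbf{Proof proposal for Theorem~\ref{th3.1}.}
The plan is to connect the sequences $\tilde h\in\tilde H(E)$ with the
left endpoints $\tilde a$ of sequences $\{(a_n,b_n)\}_{n\in\mathbb N}\in\tilde I_E$
via the observation that, since $p^{+}(E,0)=1$, the condition
$\lambda(E,0,h_n)/h_n\to 1$ means that for each $n$ there is a large
$E$-free subinterval of $(0,h_n)$, and one can extend it to a connected
component $(a_n,b_n)$ of $\mathrm{Ext}\,E$ with $b_n$ comparable to $h_n$.
More precisely, I would first prove the \emph{forward} direction: assume
$E$ is a \textbf{\emph{CSP}} - set and let $\tilde\tau\in\tilde E_0^{d}$ be
given. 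By Definition~\ref{D2*} there is $\{(a_n,b_n)\}_{n\in\mathbb N}\in\tilde I_E$
with $\tilde a\asymp\tilde\tau$. I would set $h_n:=b_n$ (or a fixed multiple
of $b_n$); since $(a_n,b_n)$ is a component of $\mathrm{Ext}\,E$ with
$(b_n-a_n)/b_n\to 1$, the interval $(a_n,b_n)\subseteq(0,h_n)$ is $E$-free and
$\lambda(E,0,h_n)/h_n\ge (b_n-a_n)/b_n\to 1=p^{+}(E,0)$, so $\tilde h\in\tilde H(E)$.
It remains to check $\tilde\tau\asymp\tilde h$, i.e. $\tilde\tau\asymp\tilde b$;
this follows from $\tilde\tau\asymp\tilde a$ together with $a_n\le b_n$ and
$b_n/a_n\to\infty$ — so I would need the extra estimate that $b_n$ is not
\emph{too} large compared to $\tau_n$, which is exactly where the uniform
structure enters. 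Here I would invoke Theorem~\ref{ImpTh}: a
\textbf{\emph{CSP}} - set is uniformly strongly porous, hence has a universal
$\tilde L=\{(l_n,m_n)\}_{n\in\mathbb N}\in\tilde I_E^{sd}$ with $M(\tilde L)<\infty$,
and choosing the representative coming from $\tilde L$ (as in the proof of
Lemma~\ref{Lem2.15}) gives $b_n=m_{k(n)}$ with $l_{k(n)}/m_{k(n)+1}$ bounded, which
controls $m_{k(n)}/\tau_n$ and yields $\tilde h\asymp\tilde\tau$.

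For the \emph{converse}, assume that for every $\tilde\tau\in\tilde E_0^{d}$
there is $\tilde h\in\tilde H(E)$ with $\tilde\tau\asymp\tilde h$, and fix
$\tilde\tau\in\tilde E_0^{d}$. From $\tilde h\in\tilde H(E)$ and
$p^{+}(E,0)=1$ I would, for each $n$, pick the largest $E$-free open
subinterval $(a_n',b_n')$ of $(0,h_n)$, so that $b_n'-a_n'=\lambda(E,0,h_n)$
and $(b_n'-a_n')/h_n\to 1$; extend $(a_n',b_n')$ to the connected component
$(a_n,b_n)$ of $\mathrm{Ext}\,E$ containing it. Then $a_n\le a_n'$, $b_n\ge b_n'$,
$b_n'\le h_n$, and $a_n'\le h_n-\lambda(E,0,h_n)=o(h_n)$, so
$a_n\le a_n'=o(h_n)$ while on the other hand $a_n\ge$ (something $\asymp h_n$?)
— no: rather I get $b_n/a_n\ge b_n'/a_n'\to\infty$ and $a_n=o(h_n)$; combined with
$a_n\ge$ ... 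Here the delicate point is to show $\tilde a\asymp\tilde h$ (not just
$a_n=o(h_n)$). To get the lower bound $a_n\ge\mathrm{const}\cdot h_n$ I would argue
that $\tau_n\in E$, $\tau_n\asymp h_n$, and $\tau_n$ cannot lie in the gap
$(a_n,b_n)$; if $\tau_n\ge b_n$ then since $b_n\ge b_n'$ and $b_n'/h_n\to1$ one
has $\tau_n\gtrsim h_n$ anyway, and if $\tau_n\le a_n$ then $\tau_n=o(h_n)$
contradicts $\tau_n\asymp h_n$ — so for large $n$, $\tau_n\ge b_n\ge b_n'$, giving
$\tau_n\asymp h_n\asymp b_n$, hence $a_n\le\tau_n$ and
$\limsup a_n/\tau_n\le\limsup a_n/b_n'\cdot b_n'/\tau_n<\infty$ because
$a_n/b_n'\le a_n'/b_n'\to 0$. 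Thus $\{(a_n,b_n)\}\in\tilde I_E$ (the limit
$a_n\to0$ and $b_n/a_n\to\infty$ are clear) and $\tilde\tau\asymp\tilde a$ by
Lemma~\ref{Lem2.3}, so $E$ is $\tilde\tau$-strongly porous; since $\tilde\tau$
was arbitrary, $E\in\textbf{\emph{CSP}}$.

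The main obstacle I anticipate is the two-sided comparison between $h_n$,
$\tau_n$, $a_n$ and $b_n$: the hypotheses only give $a_n=o(h_n)$ and
$b_n/a_n\to\infty$ "for free", and turning $\tilde h\asymp\tilde\tau$ into
$\tilde a\asymp\tilde\tau$ requires pinning down where $\tau_n$ sits relative to
the gap $(a_n,b_n)$ — the argument above that forces $\tau_n\ge b_n$ for large $n$
is the crux. On the forward side, the analogous subtlety is that $\tilde h=\tilde b$
need not be $\asymp\tilde\tau$ for an \emph{arbitrary} choice of
$\{(a_n,b_n)\}\in\tilde I_E$ realizing $\tilde\tau\asymp\tilde a$; it becomes so
only after passing to the universal $\tilde L$ with $M(\tilde L)<\infty$, i.e. the
full strength of Theorem~\ref{ImpTh} is genuinely needed, which is why the theorem
is placed after it.
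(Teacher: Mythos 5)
Both directions of your proposal contain a genuine error, and in both cases it is the same issue: confusing control of the gap lying just \emph{above} $\tau_n$ with control of the gap lying just \emph{below} it. In the necessity part, the choice $h_n:=b_n$ cannot be repaired the way you suggest. If $\{(a_n,b_n)\}_{n\in\mathbb N}\in\tilde I_{E}$ realizes $\tilde a\asymp\tilde\tau$, then $b_n/\tau_n\ge b_n/(c_2a_n)\to\infty$, so $\tilde b\not\asymp\tilde\tau$ for \emph{every} such sequence, including the one extracted from a universal $\tilde L$. Writing, as in the proof of Lemma~\ref{Lem2.15}, $\tau_n\in[m_{k(n)+1},l_{k(n)}]$ and $(a_n,b_n)=(l_{k(n)},m_{k(n)})$, the quantity that $M(\tilde L)<\infty$ bounds is $l_{k(n)}/m_{k(n)+1}$; it does not control $m_{k(n)}/\tau_n$, because $m_{k(n)}/\tau_n\ge m_{k(n)}/l_{k(n)}\to\infty$ by the very definition of $\tilde I_{E}$, so your claimed estimate is false. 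The sequence that works, and the one the paper takes, is $h_n:=m_{k(n)+1}$, the right endpoint of the gap just below $\tau_n$: then $m_{k(n)+1}\le\tau_n\le l_{k(n)}$ together with $M(\tilde L)<\infty$ gives $\tilde\tau\asymp\{m_{k(n)+1}\}_{n\in\mathbb N}$, and $\{m_{k(n)+1}\}_{n\in\mathbb N}\in\tilde H(E)$ because the gap $(l_{k(n)+1},m_{k(n)+1})$ occupies asymptotically all of $(0,m_{k(n)+1})$.

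In the sufficiency part your construction correctly produces components $(a_n,b_n)$ of $Ext\, E$ with $\tau_n\ge b_n$, $\tilde b\asymp\tilde h\asymp\tilde\tau$ and $a_n/\tau_n\to 0$ (this is exactly the paper's Lemma~\ref{lm3.2}), but the concluding appeal to Lemma~\ref{Lem2.3} is invalid: that lemma requires \emph{both} $\limsup_{n\to\infty}a_n/\tau_n<\infty$ and $\tau_n\le a_n$ for large $n$, while your own argument forces $\tau_n\ge b_n>a_n$, so in fact $\tau_n/a_n\to\infty$ and $\tilde\tau\asymp\tilde a$ fails. For a \emph{fixed} $\tilde\tau$ the step is genuinely false, not just unproved: take $E=\bigcup_{n}[\tau_n,n\tau_n]$ with $\tau_n\to0$ so fast that $(n+1)\tau_{n+1}<\tau_n/n$; the components $((n+1)\tau_{n+1},\tau_n)$ give $\tilde\tau\asymp\tilde b$, yet for large $n$ no component has its left endpoint in $[\tau_n,C\tau_n]$, so $E$ is not $\tilde\tau$-strongly porous. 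What makes the theorem true is the hypothesis for \emph{all} $\tilde\tau\in\tilde E_0^{d}$ simultaneously: from ``$\tilde\tau\asymp\tilde b$ for every $\tilde\tau$'' the paper (Lemmas~\ref{lm3.5}--\ref{lm3.7}, which rerun the scheme of the proof of Theorem~\ref{ImpTh} with right endpoints in place of left ones) manufactures a universal element $\tilde L\in\tilde I_{E}^{sd}$ with $M(\tilde L)<\infty$, and only then concludes $E\in\textbf{\emph{CSP}}$ via Theorem~\ref{ImpTh}. So the passage from right-endpoint control to left-endpoint control, which you yourself identified as the crux, is exactly the part your argument is missing, and it cannot be obtained by a direct application of Lemma~\ref{Lem2.3}.
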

\noindent \emph{Proof.} The necessity is easy to prove. Suppose $E$ is a \textbf{\emph{CSP}} - set. Let $\tilde\tau \in \tilde E_0^{d}.$ By Theorem~\ref{ImpTh} there is an universal element $\tilde
L=\{(l_n, m_n)\}_{n\in\mathbb N}\in\tilde I_{E}^{sd}$ with $M(\tilde L)<\infty.$
Reasoning as in the proof of Theorem~\ref{ImpTh}, we can
find $k(n)\in\mathbb N$ such that
\begin{equation}\label{l3.2}
\tau_n \in [m_{k(n)+1}, l_{k(n)}]
\end{equation} for sufficiently large $n$ (see \eqref{L28}).
Membership \eqref{l3.2} implies the inequalities $$m_{k(n)+1}\le\tau_n \quad \mbox{and}
\quad \frac{\tau_n}{m_{k(n)+1}}\le\frac{l_{k(n)}}{m_{k(n)+1}}.$$ Thus we have
$$\limsup_{n\to\infty}\frac{\tau_n}{m_{k(n)+1}}\le \limsup_{n\to\infty}\frac{l_{k(n)}}{m_{k(n)+1}}\le M(\tilde L)<\infty.$$
Consequently there are $c_1 \ge 1$ and $N_1 \in\mathbb N$ such that
$m_{k(n)+1}\le\tau_n \le c_{1}m_{k(n)+1}$ for $n\ge N_1.$ If we set $m_{k(n)+1}:=m_{k(N_1)+1} \quad \mbox{for} \quad n<N_1,$
then it is easy to see that $\{\tau_n\}_{n\in\mathbb
N}\asymp\{m_{k(n)+1}\}_{n\in\mathbb N}.$ To be certain that
$\{m_{k(n)+1}\}_{n\in\mathbb N}\in\tilde H(E),$ it suffices to check that
\begin{equation}\label{l3.3}
\lim_{n\to\infty}\frac{\lambda(E,0,m_{k(n)+1})}{m_{k(n)+1}}=1.
\end{equation}
(Indeed, $p^{+}(E,0)=1$ because $E$ is strongly porous on the right at 0.) Since the quantity 
$\lambda(E,0,m_{k(n)+1})$ is the length of the largest open interval in the set $(0,
m_{k(n)+1})\cap Ext E$ and $$(l_{k(n)+1}, m_{k(n)+1})\subseteq(0, m_{k(n)+1})\cap Ext
E,$$ we have
\begin{equation}\label{l3.4}
\frac{m_{k(n)+1}-l_{k(n)+1}}{m_{k(n)+1}}\le\frac{\lambda(E,0,m_{k(n)+1})}{m_{k(n)+1}}\le
1.
\end{equation} The sequence $\tilde L$ belongs to $\tilde I_{E}^{sd}.$ Hence $$\lim_{n\to\infty}\frac{m_{k(n)+1}-l_{k(n)+1}}{m_{k(n)+1}}=1.$$
The last relation and \eqref{l3.4} imply \eqref{l3.3}.

The proof of the sufficiency is more awkward, so we divide it into several lemmas.
\begin{lemma}\label{lm3.2}
Let $E\subseteq\mathbb R^{+}$ be strongly porous on the right at 0 and let
$\tilde\tau=\{\tau_n\}_{n\in\mathbb N}\in\tilde E_{0}^{d}$ and $\tilde
h=\{h_n\}_{n\in\mathbb N}\in\tilde H(E).$ If $\tilde\tau\asymp\tilde h,$ then there is
$\{(a_n, b_n)\}_{n\in\mathbb N}\in\tilde I_{E}$ such that
\begin{equation}\label{l3.5}
\{\tau_n\}_{n\in\mathbb N}\asymp\{b_n\}_{n\in\mathbb N}.
\end{equation}
\end{lemma}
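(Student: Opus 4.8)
The plan is to build the intervals $(a_n,b_n)$ directly out of the porosity data $\tilde h$. Fix $c_1,c_2>0$ with $c_1h_n\le\tau_n\le c_2h_n$ for $n\ge N_0$. Since $\tilde\tau\in\tilde E_0^d\ne\varnothing$, Remark~\ref{rem*} gives $0\in ac E$; and since $E$ is strongly porous on the right at $0$ we have $p^{+}(E,0)=1$, so the hypothesis $\tilde h\in\tilde H(E)$ means $\lambda(E,0,h_n)/h_n\to1$. For all large $n$ we have $\lambda(E,0,h_n)>0$; let $(\alpha_n,\beta_n)\subseteq(0,h_n)$ be a largest open $E$-free subinterval, so $\beta_n-\alpha_n=\lambda(E,0,h_n)$. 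Since $0\in ac E$, every interval $(0,\varepsilon)$ meets $E$, hence $\alpha_n>0$; and $\beta_n\le h_n$ together with $(\beta_n-\alpha_n)/h_n\to1$ forces $\beta_n/h_n\to1$ and $\alpha_n/h_n\to0$. Finally I would enlarge $(\alpha_n,\beta_n)$ to a maximal open interval $(a_n,b_n)$ disjoint from $E$, i.e. a connected component of $Ext E$; then $0<a_n\le\alpha_n$ and $b_n\ge\beta_n$. For the finitely many remaining indices, set $(a_n,b_n)$ to be one fixed bounded connected component of $Ext E$.

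The next step is to check that $\{(a_n,b_n)\}_{n\in\mathbb N}\in\tilde I_E$. Every $a_n$ is strictly positive and every $(a_n,b_n)$ is a connected component of $Ext E$ by construction; moreover each $b_n$ is finite (otherwise $E\subseteq[0,a_n]$, which with $a_n\to0$ would contradict $0\in ac E$). From $a_n\le\alpha_n<h_n\to0$ we get $a_n\to0$, and from $a_n/b_n\le\alpha_n/\beta_n=(\alpha_n/h_n)/(\beta_n/h_n)\to0$ we get $(b_n-a_n)/b_n=1-a_n/b_n\to1$. Hence the three conditions defining $\tilde I_E$ all hold; in particular $\tilde I_E\ne\varnothing$, consistently with Remark~\ref{1.3**}.

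It remains to prove \eqref{l3.5}, i.e. $\tilde\tau\asymp\tilde b$. Since $\tau_n\in E$ and $(a_n,b_n)\cap E=\varnothing$, for every $n$ either $\tau_n\le a_n$ or $\tau_n\ge b_n$. The first alternative is impossible for large $n$: it would give $c_1h_n\le\tau_n\le a_n\le\alpha_n$, i.e. $c_1\le\alpha_n/h_n$, contradicting $\alpha_n/h_n\to0$. So for large $n$ we have $\tau_n\ge b_n\ge\beta_n$; since $\beta_n/h_n\to1$ we may also assume $\beta_n\ge\tfrac12 h_n$, and combining with $\tau_n\le c_2h_n$ this yields $\tfrac1{2c_2}\tau_n\le\beta_n\le b_n\le\tau_n$ for all large $n$, which is exactly $\tilde\tau\asymp\tilde b$. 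I expect the delicate point to be this upper estimate $b_n\le\tau_n$: a priori the component $(a_n,b_n)$ could stretch far past $h_n$, and it is precisely the membership $\tau_n\in E$ — which bars $\tau_n$ from $(a_n,b_n)$, while $\alpha_n/h_n\to0$ bars the possibility $\tau_n\le a_n$ — that pins $b_n$ down from above.
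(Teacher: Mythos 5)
Your proof is correct and follows essentially the same route as the paper's: take the near-maximal $E$-free gap in $(0,h_n)$, observe that its left endpoint is $o(h_n)$ while its right endpoint is $\sim h_n$, enlarge to the connected component $(a_n,b_n)$ of $Ext\,E$, use $\tau_n\asymp h_n$ to exclude $\tau_n\le a_n$ and thereby pin $b_n\le\tau_n$, and verify $a_n/b_n\to0$. The only differences are cosmetic (you sandwich $b_n$ directly between multiples of $\tau_n$ rather than first showing $\tilde b\asymp\tilde h$, and you check membership in $\tilde I_E$ before rather than after the equivalence).
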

\begin{proof}
Let $\tilde\tau\asymp\tilde h.$ By the definition of $\tilde H(E),$ for every
$n\in\mathbb N,$ there is an interval $(a_{n}^{'},b_{n}^{'})\subseteq (0, h_n)\cap Ext
E$ such that
\begin{equation}\label{l3.6}
\lim_{n\to\infty}\frac{b_{n}^{'}-a_{n}^{'}}{h_n}=1.
\end{equation}
Moreover, the relation $\tilde\tau\asymp\tilde h$ implies that there are constants
$k\in(0,1)$ and $K\in(1, \infty)$ such that
\begin{equation}\label{l3.7}
\tau_n \in (kh_n, Kh_n)
\end{equation} for every $n\in\mathbb N.$ Consequently
\begin{equation}\label{l3.8}
\tau_n \in (0, Kh_n)\setminus (b_{n}^{'}-a_{n}^{'}).
\end{equation}
Using \eqref{l3.6} we can show that
\begin{equation}\label{l3.9}
b_{n}'>kh_n > a_{n}^{'}
\end{equation} for sufficiently large $n.$ It is clear that $Kh_n > h_n \ge b_{n}^{'}.$ Hence
\eqref{l3.7} -- \eqref{l3.9} imply
\begin{equation}\label{l3.10}
 \tau_n \in [b_{n}^{'}, Kh_n)
\end{equation} for sufficiently large $n$ (see Fig. 2 below).

\begin{figure}[h]
\centerline{\includegraphics[scale=0.7]{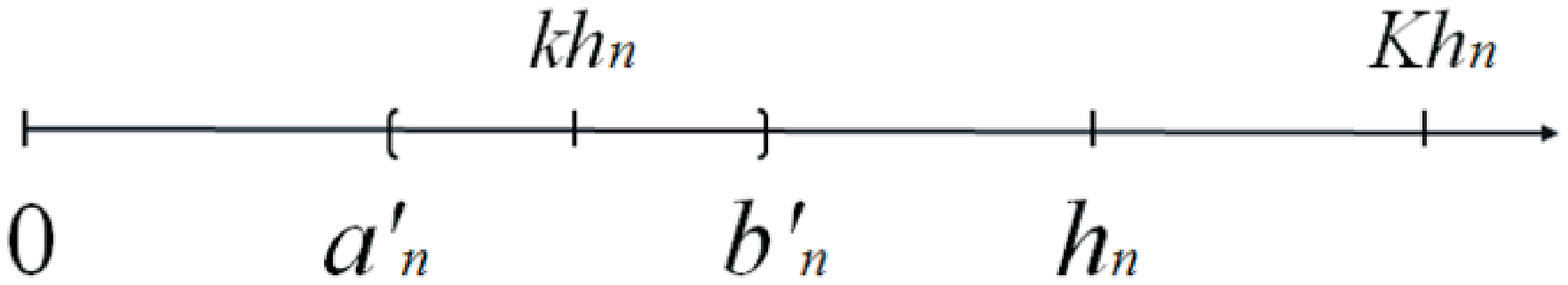}}%
\centerline{\emph{Fig. 2}}
\end{figure}
\noindent Let $(a_n, b_n)$ be the connected component of $Ext E$ meeting the inclusion
$(a_{n}^{'}, b_{n}^{'})\subseteq (a_n, b_n).$ From \eqref{l3.10} it follows $\tau_n \ge
b_n.$ Hence
\begin{equation}\label{l3.11}
kh_n < b_n^{'} \le b_n \le \tau_n <Kh_n
\end{equation}
for sufficiently large $n.$  Consequently $\tilde\tau\asymp\tilde h$ and $\tilde
b\asymp\tilde h,$ so that \eqref{l3.5} follows. To complete the proof, it suffices to
show the membership
$\{(a_n, b_n)\}_{n\in\mathbb N}\in\tilde I_{E}.$ The last relation holds if
and only if
\begin{equation}\label{l3.12}
\lim_{n\to\infty}\frac{a_n}{b_n}=0.
\end{equation}
Inequalities $a_n \le a_{n}' <b_{n}'\le b_{n}$ imply that
\begin{equation}\label{l3.13}
0\le\frac{a_n}{b_n}\le\frac{a_n^{'}}{b_n^{'}}.
\end{equation}
Moreover, since $$\frac{b_n^{'} - a_n^{'}}{h_n}\le \frac{b_n^{'} - a_n^{'}}{b_n^{'}}\le
1,$$ limit relation \eqref{l3.6} yields $$\lim_{n\to\infty}\frac{a_n^{'}}{b_n^{'}}=0.$$
Thus \eqref{l3.13} follows from \eqref{l3.12}.
\end{proof}
\begin{remark}\label{rm3.3}
It is clear that $\{b_n\}_{n\in\mathbb N}\in\tilde H(E)$ for $\{(a_n,
b_n)\}_{n\in\mathbb N}\in\tilde I_{E}.$
\end{remark}
The following lemmas are analogs of Lemma~\ref{Lem2.9}, Proposition~\ref{Pr2.10} and
have the similar proofs.
\begin{lemma}\label{lm3.5}
Let $E\subseteq\mathbb R^{+}.$ If
$\tilde\tau\in\tilde E_{0}^{d}$ and $\{(a_n, b_n)\}_{n\in\mathbb N}\in\tilde I_{E},$
then the weak equivalence $\tilde b\asymp\tilde\tau$ implies that $\tilde b$ and $\tilde
a$ are almost decreasing.
\end{lemma}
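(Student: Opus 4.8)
The plan is to mimic the argument of Lemma~\ref{Lem2.9}, replacing the role of the left endpoints $\tilde a$ by the right endpoints $\tilde b$ throughout. First I would observe that it suffices to prove that $\tilde b$ is almost decreasing, since once this is known, the relations $a_n < b_n$ together with the fact that $\{(a_n,b_n)\}_{n\in\mathbb N}\in\tilde I_{E}$ (so that $b_n/a_n\to\infty$, i.e. $a_n/b_n\to 0$) force $\tilde a$ to be almost decreasing as well: indeed $a_{n+1}\le$ (something comparable to $b_{n+1}$), and $b_{n+1}\le b_n$ eventually, while $b_n/a_n\to\infty$ makes $a_n$ eventually small relative to $b_n$, so $a_{n+1}\le b_{n+1}\le b_n$ and a short comparison gives $a_{n+1}\le a_n$ for large $n$. (Alternatively, one can note that $\tilde a\asymp\tilde b$ by \eqref{l3.11}-type reasoning together with $\tilde b\asymp\tilde\tau$, and a sequence weakly equivalent to an almost decreasing sequence whose consecutive intervals are disjoint is itself almost decreasing, exactly as in Lemma~\ref{Lem2.9}.)

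For the main claim, suppose $\tilde b$ is not almost decreasing. Then there is an infinite set $A\subseteq\mathbb N$ with $b_{n+1}>b_n$ for every $n\in A$. Since $(a_n,b_n)\cap E=\varnothing$ and the intervals are connected components of $Ext E$, the point $b_n$ lies in $E$ (or is a boundary point not interior to any component), so $b_n\notin(a_{n+1},b_{n+1})$; combined with $b_{n+1}>b_n$ this yields $a_{n+1}\ge b_n$, hence $b_{n+1}>a_{n+1}\ge b_n$. Now invoke the weak equivalence $\tilde b\asymp\tilde\tau$: by the analogue of Lemma~\ref{Lem2.3} (with $\tilde b$ in place of $\tilde a$), we get $b_n\ge\tau_n$ for sufficiently large $n$, and since $\tilde\tau$ is almost decreasing we also have $\tau_n\ge\tau_{n+1}$ eventually. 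Chaining these, for sufficiently large $n\in A$,
\[
b_{n+1}\ge a_{n+1}\ge b_n\ge\tau_n\ge\tau_{n+1}.
\]

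From this chain I would extract $\dfrac{b_n}{a_n}\le\dfrac{b_{n+1}}{a_{n+1}}\cdot\dfrac{a_{n+1}}{b_n}\le\dfrac{b_{n+1}}{\tau_{n+1}}$ — more carefully, $a_{n+1}\ge b_n$ gives $\dfrac{1}{a_{n+1}}\le\dfrac{1}{b_n}$, and $\tau_{n+1}\le a_{n+1}$ is not quite what we have, so instead I use $b_n\ge\tau_n\ge\tau_{n+1}$ to write $\dfrac{b_{n+1}}{a_{n+1}}\le\dfrac{b_{n+1}}{b_n}\le\dfrac{b_{n+1}}{\tau_{n+1}}$, while simultaneously the membership in $\tilde I_E$ forces $\dfrac{b_n}{a_n}\to\infty$ along $n\in A$. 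Since $a_{n+1}\ge b_n$ also gives $\dfrac{b_{n}}{a_{n}}\le \dfrac{b_{n}}{a_{n}}$ — the cleanest route, matching Lemma~\ref{Lem2.9} exactly, is: $a_{n+1}\ge b_n$ implies $\dfrac{b_n}{a_n}\le\dfrac{b_{n+1}}{a_{n+1}}\cdot\dfrac{a_{n+1}}{a_n}$; but $a_{n+1}\ge b_n > a_n$ only bounds below. I therefore prefer the symmetric bound $\dfrac{b_n}{a_n}\le\dfrac{b_{n+1}}{\tau_{n+1}}$ obtained from $a_n<b_n\le\tau_? $— this is the one delicate bookkeeping point. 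Granting it, taking $\limsup$ over $n\in A$ yields $\infty=\limsup_{n\in A}\dfrac{b_n}{a_n}\le\limsup_{n\in A}\dfrac{b_{n+1}}{\tau_{n+1}}\le\limsup_{n\to\infty}\dfrac{b_n}{\tau_n}<\infty$, the last inequality by the $\tilde b$-analogue of Lemma~\ref{Lem2.3}; this contradiction proves $\tilde b$ is almost decreasing.

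The main obstacle is purely organizational: getting the chain of inequalities in the right order so that the ``explosion'' $b_n/a_n\to\infty$ is pitted against the ``boundedness'' $b_n/\tau_n = O(1)$, exactly as in Lemma~\ref{Lem2.9} but with the endpoints swapped. No new idea is needed beyond the $\tilde b$-version of Lemma~\ref{Lem2.3}, which should be recorded (or stated to follow by the identical argument) before this lemma.
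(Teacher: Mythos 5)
Your overall strategy (rerun the Lemma~\ref{Lem2.9} argument with the roles of the two endpoints exchanged) is exactly the one the paper intends — the text only remarks that Lemma~\ref{lm3.5} ``has a similar proof'' — but the one inequality you import from the ``$\tilde b$-analogue of Lemma~\ref{Lem2.3}'' points the wrong way, and your chain collapses because of it. In Lemma~\ref{Lem2.3} the conclusion $\tau_n\le a_n$ comes from geometry: $\tau_n\in E$, $\tau_n\le c_2a_n<b_n$ eventually, and $\tau_n\notin(a_n,b_n)$, so $\tau_n$ must land at or below the \emph{left} endpoint. When instead $\tilde\tau\asymp\tilde b$, the same geometry gives the \emph{opposite} one-sided bound: $\tau_n\ge c_1b_n>a_n$ eventually (because $a_n/b_n\to0$), and $\tau_n\notin(a_n,b_n)$, hence $\tau_n\ge b_n$ — not $b_n\ge\tau_n$ as you assert. (The paper's own inequality \eqref{l3.11}, $b_n\le\tau_n$, confirms this direction.) Consequently the link $b_n\ge\tau_n\ge\tau_{n+1}$ in your displayed chain is false; worse, in the very scenario you are refuting one has $\tau_{n+1}\ge b_{n+1}>b_n$, so the inequality $b_n\ge\tau_{n+1}$ underlying your step $\frac{b_{n+1}}{b_n}\le\frac{b_{n+1}}{\tau_{n+1}}$ fails outright. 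Your own ``$a_n<b_n\le\tau_?$'' flags precisely this unresolved point.

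The repair is short and keeps your correct observations: if $b_{n+1}>b_n$ for $n$ in an infinite set $A$, then $b_n\notin Ext\,E\supseteq(a_{n+1},b_{n+1})$ forces $a_{n+1}\ge b_n$. Now use only the two-sided weak equivalence $c_1b_m\le\tau_m\le c_2b_m$ and the almost-decrease of $\tilde\tau$: for large $n\in A$,
\[
\frac{b_{n+1}}{a_{n+1}}\;\le\;\frac{b_{n+1}}{b_n}\;\le\;\frac{\tau_{n+1}/c_1}{\tau_n/c_2}\;=\;\frac{c_2}{c_1}\,\frac{\tau_{n+1}}{\tau_n}\;\le\;\frac{c_2}{c_1},
\]
contradicting $\lim_{m\to\infty}b_m/a_m=\infty$. (Equivalently, with the corrected one-sided bound: $b_{n+1}\le\tau_{n+1}\le\tau_n\le c_2b_n\le c_2a_{n+1}$.) Your reduction of the claim about $\tilde a$ to the claim about $\tilde b$ is fine once phrased via disjointness of components — $a_{n+1}\notin(a_n,b_n)$ and $a_{n+1}<b_{n+1}\le b_n$ force $a_{n+1}\le a_n$ — but delete the parenthetical alternative: $\tilde a\not\asymp\tilde b$ since $a_n/b_n\to0$, so that route is unavailable.
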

\begin{lemma}\label{lm3.6}
Let $E\subseteq\mathbb R^{+},$
$\tilde\tau=\{\tau_n\}_{n\in\mathbb N}\in\tilde E_{0}^{d},$ and let $\{(a_n^{(i)},
b_n^{(i)})\}_{n\in\mathbb N}\in\tilde I_{E},\, i=1,2.$ If $$\tilde
b^{1}\asymp\tilde\tau\asymp\tilde b^{2}$$ where $\tilde
b^{i}=\{b_{n}^{(i)}\}_{n\in\mathbb N}, \, i=1,2,$ then there is $N_0 \in\mathbb N$ such
that
$$(a_{n}^{(1)},b_{n}^{(1)})=(a_{n}^{(2)},b_{n}^{(2)})$$ for every $n\ge N_0.$
\end{lemma}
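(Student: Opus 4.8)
The plan is to mirror the proof of Proposition~\ref{Pr2.10}, replacing the role of the left endpoints $\tilde a^{i}$ by the right endpoints $\tilde b^{i}$ and invoking the endpoint-symmetric tools established in Section~3. First I would pass to $E^{1}$, the closure of $E$ in $\mathbb R^{+}$; by Remark~\ref{rem*} we still have $0\in ac E^{1}$ and $\tilde\tau\in\tilde E_{0}^{1d}$, and since the two sequences $\{(a_n^{(i)}, b_n^{(i)})\}_{n\in\mathbb N}$, $i=1,2$, lie in $\tilde I_{E}$ they lie in $\tilde I_{E^{1}}$ as well. By Lemma~\ref{lm3.5} (the right-endpoint analog of Lemma~\ref{Lem2.9}), the hypotheses $\tilde b^{1}\asymp\tilde\tau$ and $\tilde b^{2}\asymp\tilde\tau$ force each $\tilde b^{i}$ to be almost decreasing, hence $\tilde b^{i}\in\tilde E_{0}^{1d}$ because $b_n^{(i)}\in E^{1}$ and $b_n^{(i)}\to 0$.

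Next I would transfer the equivalence. From $\tilde\tau\asymp\tilde b^{1}$ and $\tilde\tau\asymp\tilde b^{2}$ the transitivity and symmetry of $\asymp$ give $\tilde b^{1}\asymp\tilde b^{2}$. Now I want a version of Lemma~\ref{Lem2.3} phrased for right endpoints: the key point is that since $(a_n^{(i)}, b_n^{(i)})$ is a connected component of $Ext E^{1}$, the endpoint $b_n^{(i)}$ belongs to $E^{1}$, so whenever $b_n^{(1)}$ falls strictly inside $(a_n^{(2)}, b_n^{(2)})$ we get a contradiction, forcing $b_n^{(1)}\le a_n^{(2)}$ or $b_n^{(1)}\ge b_n^{(2)}$; combined with $\lim_{n\to\infty}\frac{b_n^{(i)}-a_n^{(i)}}{b_n^{(i)}}=1$ (equivalently $\frac{a_n^{(i)}}{b_n^{(i)}}\to 0$) and the two-sided bound $c_1 b_n^{(1)}\le b_n^{(2)}\le c_2 b_n^{(1)}$ from $\tilde b^{1}\asymp\tilde b^{2}$, the alternative $b_n^{(1)}\le a_n^{(2)}$ is impossible for large $n$, so $b_n^{(1)}\ge b_n^{(2)}$ eventually; by symmetry $b_n^{(2)}\ge b_n^{(1)}$ eventually, hence $b_n^{(1)}=b_n^{(2)}$ for $n\ge N_0$. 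Since two connected components of $Ext E^{1}$ sharing a right endpoint coincide, this yields $(a_n^{(1)}, b_n^{(1)})=(a_n^{(2)}, b_n^{(2)})$ for $n\ge N_0$, and these are also the components of $Ext E$ we started with.

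I expect the main obstacle to be making precise the ``right-endpoint Lemma~\ref{Lem2.3}'' step: one must argue carefully that $b_n^{(1)}$ cannot land in the gap $(0, a_n^{(2)}]$ infinitely often. The clean way is to note that if $b_n^{(1)}\le a_n^{(2)}$ for $n$ in an infinite set, then using $\frac{a_n^{(2)}}{b_n^{(2)}}\to 0$ we would get $\frac{b_n^{(1)}}{b_n^{(2)}}\le\frac{a_n^{(2)}}{b_n^{(2)}}\to 0$, contradicting the lower bound $\frac{b_n^{(1)}}{b_n^{(2)}}\ge c_2^{-1}>0$ coming from the weak equivalence. Everything else is bookkeeping entirely parallel to Proposition~\ref{Pr2.10}, so the proof can be kept short by citing Lemma~\ref{lm3.5} and the argument pattern of Lemma~\ref{Lem2.3} rather than repeating it.
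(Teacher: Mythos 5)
Your proposal is correct and is essentially the proof the paper has in mind: the paper omits it, stating only that Lemma~\ref{lm3.6} is an analog of Proposition~\ref{Pr2.10} with a similar proof, and your adaptation (pass to the closure, get $\tilde b^{1}\asymp\tilde b^{2}$ by transitivity, then use that $b_n^{(1)}\in\overline{E}$ cannot lie in the component $(a_n^{(2)},b_n^{(2)})$, with the case $b_n^{(1)}\le a_n^{(2)}$ excluded via $a_n^{(2)}/b_n^{(2)}\to 0$ against the lower bound from the weak equivalence) is exactly the right-endpoint analogue of that argument. You correctly noticed the one point where a verbatim copy fails—Lemma~\ref{Lem2.3} cannot be applied directly since $\tilde b^{i}\not\asymp\tilde a^{i}$—and supplied the appropriate substitute, so there is no gap.
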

The next lemma is closely related to the implication $\mathrm{(i)}\Rightarrow
\mathrm{(ii)}$ from Theorem~\ref{ImpTh}.
\begin{lemma}\label{lm3.7}
Let $E\subseteq\mathbb R^{+}$ be strongly porous on the right at 0 and let $0\in ac E.$ If for every $\tilde\tau=\{\tau_n\}_{n\in\mathbb N}\in\tilde E_{0}^{d}$ there is
$\{(a_n, b_n)\}_{n\in\mathbb N}\in\tilde I_{E}$ such that $\{\tau_n\}_{n\in\mathbb N}$
$\asymp\{b_n\}_{n\in\mathbb N},$ then there is an universal $\tilde L=\{(l_n, m_n)\}_{n\in\mathbb N}$ $\in\tilde
I_{E}^{sd}$ with \begin{equation}\label{l3.14} M(\tilde L)<\infty.\end{equation}
\end{lemma}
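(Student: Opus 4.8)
The plan is to follow the blueprint of the implication $\mathrm{(i)}\Rightarrow\mathrm{(ii)}$ of Theorem~\ref{ImpTh}, systematically replacing left endpoints $a_n$ by right endpoints $b_n$ (using Lemma~\ref{lm3.5} and Lemma~\ref{lm3.6} in place of Lemma~\ref{Lem2.9} and Proposition~\ref{Pr2.10}), and then to add a separate argument for $M(\tilde L)<\infty$, since the route used in \S2 for that (Lemma~\ref{Lem2.15}, Corollary~\ref{Col2.16}) presupposes that $E$ is a \textbf{\emph{CSP}}-set, which is exactly what we are working towards and may not yet assume.

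First I would reproduce the dyadic construction from the proof of Theorem~\ref{ImpTh}: split $E\setminus\{0\}$ into the pieces $E\cap[2^{-(j-1)},2^{-(j-2)})$, retain the nonempty ones in increasing order as $A_1,A_2,\dots$ (there are infinitely many since $0\in ac E$), and pick $u_n\in A_n$. Then $\tilde u=\{u_n\}_{n\in\mathbb N}\in\tilde E_{0}^{d}$ and every $\tilde\tau=\{\tau_k\}_{k\in\mathbb N}\in\tilde E_{0}^{d}$ satisfies $\{\tau_k\}_{k\in\mathbb N}\asymp\{u_{f(k)}\}_{k\in\mathbb N}$ for a suitable almost increasing unbounded $f:\mathbb N\to\mathbb N$. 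Applying the hypothesis of the lemma to $\tilde u$ yields $\tilde A=\{(a_n,b_n)\}_{n\in\mathbb N}\in\tilde I_{E}$ with $\tilde b\asymp\tilde u$, and Lemma~\ref{lm3.5} then gives $\tilde A\in\tilde I_{E}^{d}$.

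Next I claim $\tilde A$ is universal. Fix $\tilde T=\{(t_n,p_n)\}_{n\in\mathbb N}\in\tilde I_{E}^{d}$. By Remark~\ref{rem2.13} choose $\tilde\beta\in\tilde E_{0}^{d}$ with $\beta_n/p_n\to1$, and an almost increasing unbounded $f$ with $\{\beta_k\}\asymp\{u_{f(k)}\}$; combining with $\tilde u\asymp\tilde b$ gives $\{p_k\}_{k\in\mathbb N}\asymp\{b_{f(k)}\}_{k\in\mathbb N}$. Since $\tilde A\in\tilde I_{E}^{d}$ and $f$ is almost increasing and unbounded, $\{(a_{f(k)},b_{f(k)})\}_{k\in\mathbb N}\in\tilde I_{E}^{d}$ as well; Lemma~\ref{lm3.6}, applied to $\tilde T$ and $\{(a_{f(k)},b_{f(k)})\}_{k\in\mathbb N}$ (both having right-endpoint sequence $\asymp\tilde\beta$), forces $(t_k,p_k)=(a_{f(k)},b_{f(k)})$ for all large $k$, so $t_k=a_{f(k)}$ and hence $\tilde T\preceq\tilde A$ by Definition~\ref{univ}. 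Thus $\tilde A$ is universal, and Lemma~\ref{lem2.14} delivers a universal $\tilde L=\{(l_n,m_n)\}_{n\in\mathbb N}\in\tilde I_{E}^{sd}$ with $\{l_n\}$ strictly decreasing.

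The remaining step, which I expect to be the main obstacle, is to establish $M(\tilde L)<\infty$ for this $\tilde L$ without invoking that $E\in\textbf{\emph{CSP}}$. Since $\{l_n\}$ is strictly decreasing and the components $(l_n,m_n)$ are pairwise disjoint, they are ordered as $l_{n+1}<m_{n+1}\le l_n<m_n$; moreover $a_n/b_n\to0$ gives $l_n/m_n\to0$, i.e.\ $m_n/l_n\to\infty$, and also $\{m_n\}$ is strictly decreasing. I would now pick $\tilde\tau\in\tilde E_{0}^{d}$ with $\tau_n\in E$ and $(1-\delta_n)l_n\le\tau_n\le l_n$ for some $\delta_n\to0^{+}$ chosen so small (namely $\delta_n<1-l_{n+1}/l_n$) that $\tilde\tau$ is strictly decreasing; such $\tau_n$ exist because either $l_n\in E$, or $l_n\notin E$ and then points of $E$ accumulate at $l_n$ from the left (otherwise the component $(l_n,m_n)$ could be enlarged to the left). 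Thus $\tilde\tau\asymp\{l_n\}$. The hypothesis gives $\{(a_k',b_k')\}_{k\in\mathbb N}\in\tilde I_{E}$ with $\tilde b'\asymp\tilde\tau\asymp\{l_k\}$, hence $\{(a_k',b_k')\}_{k\in\mathbb N}\in\tilde I_{E}^{d}$ by Lemma~\ref{lm3.5}. Universality of $\tilde L$ together with Proposition~\ref{univ*} yields an increasing $g$ with $b_k'=m_{g(k)}$ for all large $k$, so $c_1 l_k\le m_{g(k)}\le c_2 l_k$ for large $k$ with fixed $c_1,c_2>0$. If $g(k)\le k$ held for infinitely many $k$, then (since $\{m_n\}$ is strictly decreasing and $l_k<m_k$) we would get $m_k\le m_{g(k)}\le c_2 l_k$, i.e.\ $m_k/l_k\le c_2$, contradicting $m_n/l_n\to\infty$. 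Hence $g(k)\ge k+1$ for all large $k$, so $m_{g(k)}\le m_{k+1}\le l_k$, and together with $m_{g(k)}\ge c_1 l_k$ this forces $l_k/m_{k+1}\le 1/c_1$ for all large $k$. Therefore $M(\tilde L)=\limsup_{n\to\infty}l_n/m_{n+1}\le 1/c_1<\infty$, which completes the proof.
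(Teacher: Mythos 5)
Your proof is correct and follows essentially the same route as the paper's: same dyadic construction of $\tilde u$, same appeal to the hypothesis and Lemmas~\ref{lm3.5}, \ref{lm3.6} to establish universality of $\tilde A$, then Lemma~\ref{lem2.14} to pass to $\tilde I_E^{sd}$. Your final step for $M(\tilde L)<\infty$ is in the same spirit as the paper's (which chooses $\tau_n\in[b_{n+1},a_n]$ with $\tau_n/a_n\to1$ and invokes ``reasoning as in the proof of \eqref{L29}'' to get $\tilde\tau\asymp\{b_{n+1}\}$), but your version is slightly more explicit and a bit cleaner: you only need the one-sided bound $g(k)\ge k+1$, obtained directly from $m_n/l_n\to\infty$, rather than pinning down $g(k)$ exactly.
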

The following proof is a modification of the corresponding part of the proof of
Theorem~\ref{ImpTh}.
\newline \emph{Proof of Lemma~\ref{lm3.7}.} Suppose that for every $\tilde\tau=\{\tau_n\}_{n\in\mathbb N}\in\tilde
E_0^{d}$ there is $\{(a_n, b_n)\}_{n\in\mathbb N}$ $\in\tilde I_{E}$ such that $\tilde
\tau\asymp\tilde b=\{b_n\}_{n\in\mathbb N}.$ In the proof of Theorem~\ref{ImpTh} we have
found a sequence $\tilde u=\{u_n\}_{n\in\mathbb N}\in\tilde E_0^{d}$ such that for every
$\tilde\tau\in\tilde E_0^{d}$ there is an almost  increasing function $f:\mathbb
N\rightarrow \mathbb N$ satisfying the relation
\begin{equation}\label{l3.15}
\{\tau_k\}_{k\in\mathbb N}\asymp\{u_{f(k)}\}_{k\in\mathbb N}.
\end{equation}
By the supposition there is $\{(a_n, b_n)\}_{n\in\mathbb N}\in\tilde I_{E}$ such that
\begin{equation}\label{l3.15*}
\tilde u\asymp\tilde b.
\end{equation}
Since $\tilde u\in\tilde E_0^{d},$ Lemma~\ref{lm3.5} implies that $\tilde b$ and $\tilde
u$ are almost decreasing. Consequently $\tilde A:=\{(a_n, b_n)\}_{n\in\mathbb
N}\in\tilde I_{E}^{d}.$ We shall show that $\tilde A$ is universal. Let $\tilde
L=\{(l_n, m_n)\}_{n\in\mathbb N}$ be an arbitrary element of $\tilde I_{E}^{d}.$ Using
Definition~\ref{univ*} we see that $\tilde A$ is universal if and only if there are $N_1
\in \mathbb N$ and $f: \mathbb N_{N_1}\rightarrow \mathbb N$ such that
\begin{equation}\label{l3.16}
m_n = b_{f(n)}
\end{equation} for $n\in \mathbb N_{N_1}.$ It is easy to show that there is $\tilde\tau=\{\tau_n\}_{n\in\mathbb N}\in\tilde
E_0^{d}$ such that
\begin{equation}\label{l3.17}
\lim_{n\to\infty}\frac{\tau_n}{m_n}=1.
\end{equation} The last limit relation implies that $\{m_n\}_{n\in\mathbb N}=\tilde m\asymp\tilde\tau=\{\tau_n\}_{n\in\mathbb
N}.$ This equivalence, \eqref{l3.15} and \eqref{l3.15*} give us
$$\{m_k\}_{k\in\mathbb N}\asymp\{b_{f(k)}\}_{k\in\mathbb N}.$$ It is clear that $\{(a_{f(k)},b_{f(k)})\}_{k\in\mathbb N}\in\tilde
I_{E}^{d}.$ Consequently, by Lemma~\ref{lm3.6}, there is $N_0 \in \mathbb N$ such that
$$(l_k, m_k)=(a_{f(k)},b_{f(k)})$$ for all $k\ge N_0.$ Equality \eqref{l3.16} follows
for sufficiently large $n.$ Hence $\tilde A\in\tilde I_{E}^{d}$ is universal. Using
Lemma~\ref{lem2.14} we can assume that $\{a_n\}_{n\in\mathbb N}$ and
$\{b_n\}_{n\in\mathbb N}$ are strictly decreasing. To complete the proof it suffices to
show that $M(\tilde A)<\infty.$ As in the proof of Lemma~\ref{Lem2.15} we may consider
the closed intervals $[b_{n+1}, a_n], \, n=1,2,...,$ that together with the half-open
interval $[b_1, \infty)$  form a disjoint cover of the set $E\setminus \{0\},$
$$E\setminus \{0\}\subseteq[b_1, \infty)\cup\left(\bigcup_{n\in\mathbb N}[b_{n+1}, a_n]\right).$$
We can find a sequence $\tilde\tau=\{\tau_n\}_{n\in\mathbb N}\in\tilde E_0^{d}$ such
that
\begin{equation}\label{l3.18}
\lim_{n\to\infty}\frac{\tau_n}{a_n}=1 \quad \mbox{and} \quad \tau_n \in [b_{n+1}, a_n]
\end{equation} for every $n\in\mathbb N.$ Reasoning as in the proof of equality
\eqref{L29} we can see that $$\{\tau_n\}_{n\in\mathbb N}\asymp\{b_{n+1}\}_{n\in\mathbb
N},$$ i.e.,  there are positive constants $c_1, c_2$ such that $$c_1 b_{n+1}\le
\tau_{n}\le c_2 b_{n+1}.$$ The last inequality and \eqref{l3.18} imply $$\infty> c_2 \ge
\limsup_{n\to\infty}\frac{\tau_n}{b_{n+1}}=\limsup_{n\to\infty}\frac{\tau_n}{a_n}\frac{a_n}{b_{n+1}}=\limsup_{n\to\infty}\frac{a_n}{b_{n+1}}=M(\tilde
A),$$ and so the lemma is proved. $\qquad \qquad \qquad \qquad \qquad \qquad \qquad
\qquad  \qquad \qquad \qquad \qquad \, \,\Box$

\vspace{5mm}
Now we can simply finish the proof of Theorem~\ref{th3.1}.

\bigskip

\noindent \emph{Proof of Theorem~\ref{th3.1}.} \emph{The sufficiency.} Suppose for every
$\tilde\tau=\{\tau_n\}_{n\in\mathbb N}\in\tilde E_0^{d}$ there is $\tilde
h=\{h_n\}_{n\in\mathbb N}\in\tilde H(E)$ such that $\tilde \tau\asymp\tilde h.$ Then, by
Lemma~\ref{lm3.2}, for every $\tilde\tau \in \tilde E_0^{d}$ there is $\{(a_n,
b_n)\}_{n\in\mathbb N}\in\tilde I_{E}$ such that $\tilde\tau\asymp\tilde b.$
Consequently, by Lemma~\ref{lm3.7}, the preordered set $(\tilde I_{E}^{d},\preceq)$ has
an universal element $\tilde L\in\tilde I_{E}^{sd}$ satisfying the inequality $M(\tilde
L)<\infty.$ By Theorem~\ref{ImpTh} $E$ is a \textbf{\emph{CSP}} - set.
$\qquad \qquad \qquad \qquad \qquad \qquad \qquad \quad \quad \qquad \qquad \qquad
  \Box$

\bigskip

  Let $A$ and $B$ be subsets of $\mathbb R^{+}.$ We shall write $A\sqsubseteq B$ if there is
  $t=t(A,B)>0$ such that $$A\cap(0,t)\subseteq B\cap (0,t).$$ The next theorem gives
  a constructive description of the  \textbf{\emph{CSP}} - sets.
  \begin{theorem}\label{descr}
Let $E\subseteq\mathbb R^{+}.$ Then $E$ is a \textbf{\emph{CSP}} - set
if and only if there are $q>1$ and a strictly decreasing
sequence $\{x_n\}_{n\in\mathbb N}, x_n>0$ for $n\in\mathbb N,$ such that
\begin{equation}\label{l3.20}\lim_{n\to\infty}\frac{x_{n+1}}{x_n}=0\end{equation} and
\begin{equation}\label{l3.21}E\sqsubseteq W(q)\end{equation} where \begin{equation}\label{l3.22}W(q):=\bigcup_{n\in\mathbb N}(q^{-1}x_n , q x_n).\end{equation}
  \end{theorem}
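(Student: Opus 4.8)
The plan is to prove both directions by reducing to the characterizations already established, especially Theorem~\ref{ImpTh} (uniform strong porosity / universal element) and the structure result Theorem~\ref{Th2.24}, together with Examples~\ref{ex1} and \ref{ex2} which already show that the model sets $W$ and $W(q)$ are \textbf{\emph{CSP}} - sets. For the sufficiency, suppose $E \sqsubseteq W(q)$ with $W(q) = \bigcup_{n}(q^{-1}x_n, qx_n)$ and $x_{n+1}/x_n \to 0$. If $0 \notin ac E$ the conclusion is trivial by Remark~\ref{1.3*}, so assume $0 \in ac E$. First I would check that $E$ is strongly porous on the right at $0$: the gaps $(qx_{n+1}, q^{-1}x_n)$ (which are disjoint from $W(q)$ for large $n$ since $q^2 x_{n+1} < x_n$ eventually, hence also disjoint from $E$ on $(0,t)$) have relative length tending to $1$, so $p^{+}(E,0)=1$. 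Then I would verify criterion (ii) of Theorem~\ref{ImpTh} or, more economically, the structure criterion \eqref{L48} of Theorem~\ref{Th2.24}: every connected component $(a,b) \in Com$ with $a < t$ (small) and $b/a$ exceeding a fixed threshold $c = c(q) > 1$ must straddle a whole block $(q^{-1}x_n, qx_n)$, hence sit inside a maximal gap $[qx_{n+1}, q^{-1}x_n]$ of the allowable form, and then $b/a$ is forced to be large because the ratios $(q^{-1}x_n)/(qx_{n+1}) = q^{-2} x_n/x_{n+1} \to \infty$. This gives \eqref{L48}, so by Theorem~\ref{Th2.24} there is a universal element, and one checks $M(\tilde L) < \infty$ directly (the lengths blow up), whence $E \in \textbf{\emph{CSP}}$ by Theorem~\ref{ImpTh}.

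For the necessity, suppose $E \in \textbf{\emph{CSP}}$. Again the case $0 \notin ac E$ is handled separately: then $Ext E$ contains some interval $(0,\delta)$, and one can take any $\{x_n\}$ with $x_n \to 0$, $x_{n+1}/x_n \to 0$, $x_n < \delta$, and $q$ arbitrary, so that $E \cap (0,\delta) = \varnothing \subseteq W(q)$. Assume $0 \in ac E$. By Theorem~\ref{ImpTh} the preordered set $(\tilde I_{E}^{d}, \preceq)$ has a universal element $\tilde L = \{(l_n, m_n)\}_{n\in\mathbb N} \in \tilde I_{E}^{sd}$ with $M(\tilde L) = \limsup_n l_n/m_{n+1} < \infty$. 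Fix $q > 1$ with $q^2 > M(\tilde L)$ (actually any $q$ with $q > \sup_n l_n/m_{n+1}$ after discarding finitely many terms works, adjusting the index). Now set $x_n := m_n$ for $n$ large (re-indexed to start at $1$), or more symmetrically choose $x_n$ to be a geometric mean-type point inside the gap; the key facts are that $l_n/m_{n+1}$ is bounded above, that $m_{n+1}/l_{n+1} \to \infty$ (so consecutive gaps are far apart), and that $l_n \to 0$. These give $x_{n+1}/x_n \to 0$. Finally I would show $E \sqsubseteq W(q)$: a point $x \in E$ with $x$ small lies, by the cover argument from the proof of Lemma~\ref{Lem2.15}, in one of the closed intervals $[m_{k+1}, l_k]$; since $l_k/m_{k+1} < q^2$ eventually, the whole interval $[m_{k+1}, l_k]$ is contained in $(q^{-1}x_k, q x_k)$ for a suitable choice of $x_k$ (e.g. $x_k = \sqrt{l_k m_{k+1}}$, so that $q^{-1}x_k \le m_{k+1} \le l_k \le q x_k$ precisely when $q \ge \sqrt{l_k/m_{k+1}}$), hence $x \in W(q)$.

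The main obstacle I anticipate is bookkeeping at the boundary between "the model" and "the actual set" — namely making the $\sqsubseteq$ relation (which only controls $E$ near $0$) mesh cleanly with the two-sided structure of the $(l_n,m_n)$: one must be careful that the maximal gaps of $E$ near $0$ are \emph{exactly} the intervals $(l_n, m_n)$ of the universal element (true by universality plus Proposition~\ref{Pr2.27}-type reasoning, since a universal $\tilde L \in \tilde I_E^{sd}$ enumerates cofinitely all components of $Ext E$ with large enough relative length), and that components with small relative length are harmless because they are automatically swallowed inside a single block $(q^{-1}x_n, qx_n)$. Once this is pinned down, the choice $x_n = \sqrt{l_n m_{n+1}}$ and $q$ slightly larger than $\sqrt{M(\tilde L)}$ makes both \eqref{l3.20} and \eqref{l3.21} fall out, and the converse direction is a direct application of Theorem~\ref{Th2.24}; so the genuine content is entirely concentrated in this matching of $Com$ with the universal sequence.
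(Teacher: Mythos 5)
Your outline matches the paper's argument in both directions: for sufficiency you observe that any component $(a,b)\in Com$ near $0$ with $b/a>q^{2}$ must engulf one of the gaps $(qx_{n+1},q^{-1}x_n)$ and therefore has $b/a\ge q^{-2}x_n/x_{n+1}\to\infty$; for necessity you take a universal $\tilde L=\{(l_n,m_n)\}\in\tilde I_E^{sd}$ with $M(\tilde L)<\infty$, use the disjoint cover $E\setminus\{0\}\subseteq[m_1,\infty)\cup\bigcup_n[m_{n+1},l_n]$, and anchor the block system at $x_n=m_n$ (or at the geometric means $\sqrt{l_n m_{n+1}}$, which only changes the admissible $q$). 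Two points deserve tightening. First, the phrase ``sit inside a maximal gap $[qx_{n+1},q^{-1}x_n]$'' has the inclusion backwards: the component $(a,b)$ \emph{contains} that gap, it is not contained in it; the dichotomy is that $(a,b)$ either lies in a single closed block $[q^{-1}x_n,qx_n]$ (forcing $b/a\le q^{2}$) or swallows a whole gap. Second, in the sufficiency half the appeal to Theorem~\ref{Th2.24} is a detour that does not by itself finish the job: Theorem~\ref{Th2.24} only gives \emph{existence} of a universal element, and Example~\ref{ex3} shows a universal $\tilde L$ can have $M(\tilde L)=\infty$, so you still have to identify a concrete universal $\tilde L$ and bound $M(\tilde L)$. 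The natural choice is precisely the one the paper constructs — the component of $Ext E$ containing $(qx_{n+1},q^{-1}x_n)$ — for which one reads off $m_{n+1},l_n\in[q^{-1}x_{n+1},qx_{n+1}]$ and hence $M(\tilde L)\le q^{2}$; your parenthetical ``the lengths blow up'' is not the relevant estimate, since what must be controlled is that $l_n/m_{n+1}$ stays bounded, not that the gaps grow. With these repairs your argument coincides with the paper's.
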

\begin{proof}
The theorem is trivial if $0\not\in ac E.$ Let us consider the case when $0\in ac E.$
Suppose that there are $q>1$ and a sequence $\{x_n\}_{n\in\mathbb N}$ of positive real
numbers such that \eqref{l3.20} and \eqref{l3.21} hold. Let $N_1$ and $N_2$ be natural
numbers such that
\begin{equation}\label{l3.23}(q^{-1}x_{n+1}, qx_{n+1})\cap(q^{-1}x_{n}, qx_{n})=\varnothing\end{equation}
for $n\ge N_1$
\begin{equation}\label{l3.24}E\cap(0, t)\subseteq W(q)\cap(0, t)\end{equation}
for $t\le x_{N_2}.$ Then we have $$(qx_{n+1}, q^{-1}x_{n})\subseteq Ext
E$$ for $n\ge N_1 \vee N_2$ and write, in this case, $(l_n, m_n)$ for the unique
connected component of $Ext E$ satisfying the inclusion
\begin{equation}\label{l3.25}(l_n, m_n)\supseteq (qx_{n+1}, q^{-1}x_{n}). \end{equation}
Let $(l_n, m_n):=(l_{N_1 \vee N_2}, m_{N_1 \vee N_2})$ for $n< N_1 \vee N_2.$ We claim
that $\tilde L=\{(l_n, m_n)\}_{n\in\mathbb N}$ is universal. Indeed, \eqref{l3.25} imply that
$$\liminf_{n\to\infty}\frac{m_n}{l_n}\ge q^{-2} \liminf_{n\to\infty}\frac{x_{n+1}}{x_{n}}=\infty.$$
Thus $$\lim_{n\to\infty}\frac{m_n}{l_n}=\infty,$$ so that $\tilde L$ belongs to $\tilde
I_{E}^{d}.$ Let $\tilde A=\{(a_j, b_j)\}_{j\in\mathbb N}$ be an arbitrary element of
$\tilde I_{E}^{d}.$ There is $N_3 \in\mathbb N$ such that
\begin{equation}\label{l3.26}\frac{b_j}{a_j}>q^{2} \end{equation} and $b_j < (x_{N_1} \vee
 x_{N_2})$ for $j\ge N_3.$ Let $j\ge N_3.$ The interval $(a_j, b_j)$ is a connected
 component of $Ext E.$ Consequently, there is $n\ge (N_1 \vee N_2)$ such that either
\begin{equation}\label{l3.27}(a_j, b_j)\supseteq(qx_{n+1}, q^{-1}x_n) \end{equation} or
\begin{equation}\label{l3.28}(a_j, b_j)\subseteq(q^{-1}x_n, x_n). \end{equation}
Inclusion \eqref{l3.28} implies $$\frac{b_j}{a_j}\le\frac{qx_n}{q^{-1}x_n}=q^{2},$$
contrary to \eqref{l3.26}. Hence \eqref{l3.27} holds. Since for every nonvoid interval
$(s,t)\subseteq Ext E$ there is a unique connected component $(a,b)\supseteq (s,t),$
inclusions \eqref{l3.25} and \eqref{l3.27} imply the equality $(l_n, m_n)=(a_j,
b_j).$ Hence $\tilde L\succeq\tilde A$ for every $\tilde A\in\tilde I_{E}^{d}.$ Thus
$\tilde L$ is an universal element of $(\tilde I_{E}^{d}, \preceq).$

In accordance with Theorem~\ref{ImpTh} to prove that $E$ is a \textbf{\emph{CSP}} - set it is sufficient to show \begin{equation}\label{l3.29}M(\tilde
L)=\limsup_{n\to\infty}\frac{l_n}{m_{n+1}}<\infty.
\end{equation}

\begin{figure}[h]
\centerline{\includegraphics[scale=0.7]{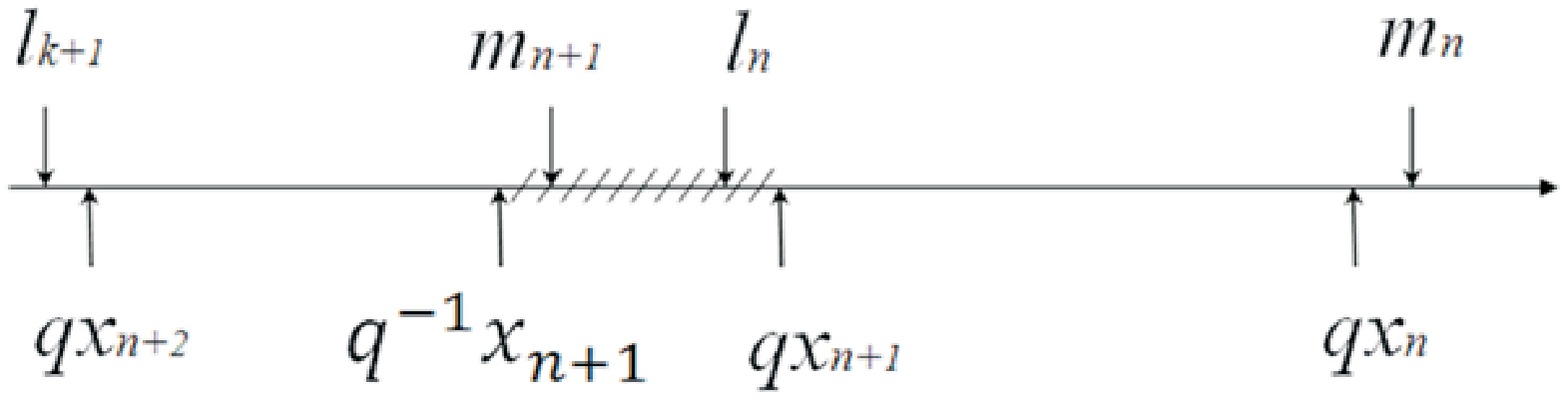}}%
\centerline{\emph{Fig. 3}}
\end{figure}

Since, for sufficiently large $n,$ $(l_n, m_n)\supseteq (qx_{n+1}, q^{-1}x_{n})$ and
$(l_{n+1}, m_{n+1})\supseteq (qx_{n+2}, q^{-1}x_{n+1})$ and $l_{n+1}<m_{n+1}<l_{n}<m_{n}$
and $qx_{n+2}<q^{-1}x_{n+1}<qx_{n+1}<q^{-1}x_{n}$ (see Fig. 3), we have $$m_{n+1},
l_{n}\in [q^{-1}x_{n+1}, qx_{n+1}].$$ Consequently the inequality
$$\frac{l_n}{m_{n+1}}\le \frac{qx_{n+1}}{q^{-1}x_{n+1}}=q^{2}$$ holds for sufficiently
large $n.$ Inequality \eqref{l3.29} follows.

Now assume that $E$ is a \textbf{\emph{CSP}} - set. Let $\{(l_n, m_n)\}_{n\in\mathbb
N}\in\tilde I_{E}^{sd}$ be universal. Without
loss of generality, we may suppose that the sequence $\{l_n\}_{n\in\mathbb N}$ is strictly decreasing. Define $\{x_n\}_{n\in\mathbb
N}:=\{m_n\}_{n\in\mathbb N}.$ Using the inequality $m_{n+1}\le l_n$ we obtain, from the
definition of $\tilde I_{E}^{d},$ that $$\limsup_{n\to\infty}\frac{x_{n+1}}{x_n}\le
\limsup_{n\to\infty}\frac{l_n}{m_n}=0.$$ Thus
$$\lim_{n\to\infty}\frac{x_{n+1}}{x_n}=0.$$
To complete the proof it is sufficient to show that there is $q>1$ such that
\eqref{l3.21} holds. As in the proof of Lemma~\ref{Lem2.15}, one can easily note that
\begin{equation}\label{l3.30}E\setminus \{0\}\subseteq [m_1, \infty) \cup \left (\bigcup_{n\in\mathbb N}[m_{n+1},
l_n]\right).
\end{equation}
By formulas \eqref{L13} and \eqref{L14} we have $$M(\tilde
L)=\limsup_{n\to\infty}\frac{l_n}{m_{n+1}}<\infty.$$ Let $q\in (M(\tilde L), \infty).$
Then there is $N_4 \in \mathbb N$ such that $\frac{l_n}{m_{n+1}}<q$ for $n\ge N_4.$
It is clear that $q>1.$ Consequently the inequalities
$q^{-1}m_{n+1}<m_{n+1}\le l_{n}<qm_{n+1}$ hold for $n\ge N_4.$ These inequalities yield the inclusion $[m_{n+1}, l_n]\subseteq(q^{-1}m_{n+1}, qm_{n+1}).$ The
last inclusion and \eqref{l3.30} imply $$E\cap (0,t)\subseteq \left
(\bigcup_{n\in\mathbb N}(q^{-1}m_{n}, qm_n)\right)\cap (0,t)$$ for every $t\in (0,
m_{N_{4}+1}).$ Relation \eqref{l3.21} follows.
\end{proof}
  In the case of the closed sets $E$ we may modify Theorem~\ref{descr} by the
  following way.
  \begin{theorem}\label{descr*}
Let $E\subseteq\mathbb R^{+}$ be closed and let $0\in ac E.$ Then $E$ is
a \textbf{\emph{CSP}} - set if and only if there are $q>1$ and a strictly decreasing
sequence of numbers $x_n >0, \, n\in\mathbb N,$ such that
$\mathop{\lim}\limits_{n\to\infty}\frac{x_{n+1}}{x_n}=0$ and$$W(1)\sqsubseteq
E\sqsubseteq W(q)$$ where
$$W(a)=\left(\bigcup_{n\in\mathbb N}[x_{n}, ax_{n}]\right), \, a\in [1, \infty).$$
  \end{theorem}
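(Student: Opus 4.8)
**Proof proposal for Theorem~\ref{descr*}.**

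The plan is to deduce this from Theorem~\ref{descr} together with the extra hypothesis that $E$ is closed. Since $0\in ac E$ is now assumed outright, the trivial case disappears and we must establish both inclusions $W(1)\sqsubseteq E$ and $E\sqsubseteq W(q)$ for suitable $q$ and a suitable strictly decreasing null-quotient sequence $\{x_n\}$.

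First, for the "only if" direction, suppose $E$ is a \textbf{\emph{CSP}} - set. I would run through the second half of the proof of Theorem~\ref{descr}: pick a universal $\tilde L=\{(l_n,m_n)\}_{n\in\mathbb N}\in\tilde I_{E}^{sd}$ with $\{l_n\}$ strictly decreasing, set $x_n:=m_n$, and get $\lim_{n\to\infty}x_{n+1}/x_n=0$ and $E\sqsubseteq W(q)$ (in the union-of-open-intervals sense) for any $q>M(\tilde L)$ exactly as there. The new point is to produce the lower inclusion $W(1)\sqsubseteq E$, i.e.\ that eventually each $[x_n, x_n]=\{m_n\}$ — more usefully, each closed interval $[x_{n+1}, x_n]$? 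No: $W(1)=\bigcup_n[x_n,1\cdot x_n]=\bigcup_n\{x_n\}$, so $W(1)\sqsubseteq E$ just says $m_n\in E$ for large $n$. This holds because $(l_n,m_n)$ is a connected component of $Ext E$ and $E$ is closed, so the endpoint $m_n$ belongs to $E$ (a boundary point of $Ext E$ that is not $0$ lies in $\overline{E}=E$). Thus $W(1)\sqsubseteq E\sqsubseteq W(q)$, using that $W(q)=\bigcup(q^{-1}x_n,qx_n)$ from Theorem~\ref{descr} sits inside $\bigcup[q'^{-1}x_n, q'x_n]$ for any $q'>q$, so after renaming $q$ we land in the closed-interval form stated here.

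For the "if" direction, assume $q>1$ and a strictly decreasing sequence $x_n>0$ with $x_{n+1}/x_n\to0$ satisfy $W(1)\sqsubseteq E\sqsubseteq W(q)$ with $W(a)=\bigcup_n[x_n,ax_n]$. From $E\sqsubseteq W(q)=\bigcup_n[x_n,qx_n]$ I would pass to the slightly larger open cover $\bigcup_n(q'^{-1}x_n, q'x_n)$ for any fixed $q'>q$: since $x_{n+1}/x_n\to0$ these intervals are eventually disjoint, and $[x_n,qx_n]\subseteq(q'^{-1}x_n,q'x_n)$, so $E\sqsubseteq \bigcup_n(q'^{-1}x_n, q'x_n)$ in the sense of Theorem~\ref{descr}. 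Hence $E$ is a \textbf{\emph{CSP}} - set directly by the "if" half of Theorem~\ref{descr}; the hypothesis $W(1)\sqsubseteq E$ (and closedness) is not even needed for this direction, and I would simply remark that.

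The main obstacle — really the only subtle point — is the handling of the endpoints when moving between the closed-interval description $W(a)=\bigcup[x_n,ax_n]$ used here and the open-interval description $W(q)=\bigcup(q^{-1}x_n,qx_n)$ used in Theorem~\ref{descr}: one must verify that inflating the constant absorbs the boundary and preserves the eventual-disjointness needed to identify connected components of $Ext E$, and, in the "only if" direction, that closedness of $E$ forces the finite endpoints $m_n$ of the complementary components into $E$ so that $W(1)\sqsubseteq E$. Once the constant-inflation bookkeeping is written out, both directions are immediate consequences of Theorem~\ref{descr}.
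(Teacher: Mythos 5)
Your overall strategy --- deduce Theorem~\ref{descr*} from Theorem~\ref{descr}, using closedness of $E$ to place the right endpoints $m_n$ in $E$ and hence get $W(1)\sqsubseteq E$ --- is exactly what the paper intends when it says the proof is ``similar to'' that of Theorem~\ref{descr}, and your ``if'' direction (inflate the closed intervals $[x_n,qx_n]$ to open ones $(q'^{-1}x_n,q'x_n)$ with $q'>q$ and quote Theorem~\ref{descr}) is correct as written, including the observation that $W(1)\sqsubseteq E$ plays no role there.

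The ``only if'' direction, however, has a genuine gap in the conversion step. You propose to start from the open--interval conclusion $E\sqsubseteq\bigcup_n(q^{-1}m_n,qm_n)$ of Theorem~\ref{descr}'s proof and then absorb it into $\bigcup_n[q'^{-1}x_n,q'x_n]$, ``renaming $q$.'' But Theorem~\ref{descr*} requires the \emph{asymmetric} form $W(a)=\bigcup_n[x_n,ax_n]$, whose left endpoints are $x_n$ themselves. To rewrite $\bigcup_n[q'^{-1}m_n,q'm_n]$ in that shape you are forced to take $x_n:=q'^{-1}m_n$ (with $a=q'^2$), and then $W(1)=\bigcup_n\{q'^{-1}m_n\}$ is generally \emph{not} contained in $E$ --- for large $n$ one has $l_n<q'^{-1}m_n<m_n$, so $q'^{-1}m_n\in(l_n,m_n)\subseteq Ext\,E$. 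Thus the inflation that rescues $E\sqsubseteq W(q)$ simultaneously destroys $W(1)\sqsubseteq E$, and the two halves of the conclusion cannot be met by that choice.

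The fix is to bypass the open--interval form entirely and use the cover already present in the proof of Theorem~\ref{descr}: with $x_n:=m_n$ and $q>M(\tilde L)$ one has $E\setminus\{0\}\subseteq[m_1,\infty)\cup\bigcup_{n}[m_{n+1},l_n]$ and $l_n<qm_{n+1}$ for large $n$, so directly $[m_{n+1},l_n]\subseteq[m_{n+1},qm_{n+1}]=[x_{n+1},qx_{n+1}]$, giving $E\sqsubseteq\bigcup_n[x_n,qx_n]=W(q)$ without any rescaling of the $x_n$. Since $x_n=m_n$ are kept as the left endpoints, your closedness argument then does give $W(1)=\bigcup_n\{m_n\}\sqsubseteq E$, and the two inclusions coexist. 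Everything else in your proposal stands.
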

  The last theorem shows that examples \ref{ex1} and \ref{ex2} give us, in a sense,``the
  extremal elements'' among the closed \textbf{\emph{CSP}} - sets with the accumulation point 0. The proof of Theorem~\ref{descr*} is similar to the proof of Theorem~\ref{descr}, so
  we omit it here.


\medskip
\noindent
{\bf Viktoriia Bilet} \hspace{5cm} {\bf Oleksiy Dovgoshey}\\
IAMM of NASU, Donetsk \hspace{3.4cm} IAMM of NASU, Donetsk \\
E-mail: biletvictoriya@mail.ru \hspace{2.6 cm} E-mail: aleksdov@mail.ru  \\


\begin{thebibliography}{99}
\bibitem{ADK} {\it F. Abdullayev, O. Dovgoshey, M. K\"{u}\c{c}\"{u}kaslan,} Compactness and boundedness of tangent spaces to metric spaces // Beitr. Algebra Geom., \textbf{51} (2010), 547 -- 576.

\bibitem{BS} {\it D. B. Beliaev, S. K. Smirnov,} On dimension of porous measures // Math. Ann., \textbf{323} (2002), № 1,  123 -- 141.

\bibitem{BD} {\it  V. Bilet, O. Dovgoshey,} Boundedness of pretangent spaces and local strong porosi-ty// (in preparation).

\bibitem{VC} {\it V. Chousionis,} Directed porosity on conformal iterated function systems and weak convergence of singular integrals // Ann. Acad. Sci. Fenn. Math., \textbf{34} (2009), № 1, 215 -- 232.

\bibitem{D1} {\it A. Denjoy,} Sur une propri\'{e}t\'{e} des s\'{e}ries trigonom\'{e}triques, Verlag v.d. G. V. der Wie-en Natuur. Afd., (1920).

\bibitem{D2} {\it A. Denjoy,} Le\c{c}ons le calcul des coefficents d'une s\'{e}rie trigonom\'{e}trique, Part II, M\'{e}trique et topologie d'ensembles parfaits et de fonctions, Gauthier-Villars, Paris, (1941).

\bibitem{Dol1} {\it E. P. Dol\v{z}enko,} Boundary properties of arbitrary functions // Izv. Akad. Nauk SSSR, Ser. Mat, \textbf{1} (1967), 1 -- 12.

\bibitem{EJ} {\it J.-P. Eckmann, E. J\"{a}rvenp\"{a}\"{a}, M. J\"{a}rvenp\"{a}\"{a},} Porosities and dimensions of measures // Nonlinearity, \textbf{13} (2000), № 1, 1 -- 18.

\bibitem{H} {\it J. Foran, P. Humke,} Some set-theoretic properties of $\sigma$-porous sets // Real Anal. Exchange, \textbf{6} (1980/81), № 1,  114 -- 119.

\bibitem{HV} {\it P. Humke, T. Vessey,} Another note on $\sigma$-porous sets // Real Anal. Exchange, \textbf{8} (1982/83),  261 -- 271.

\bibitem{KPS} {\it L. Karp, T. Kilpel\"{a}inen, A. Petrosyan, H. Shangholian,} On the porosity of free boundaries in degenerate variational inequalities // J. Differential Equations, \textbf{164} (2000),  110 -- 117.

\bibitem{AV} {\it A. K\"{a}enm\"{a}ki, V. Suomala,} Conical upper density theorems and porosity of measures // Adv. Math., \textbf{217} (2008), № 3, 952 -- 966.

\bibitem{Kh} {\it A. Khintchine,} An investigation of the structure of measurable functions // Mat. Sbornik, \textbf{31} (1924), 265 -- 285.

\bibitem{KR} {\it P. Koskela, S. Rohde,} Hausdorff dimension and mean porosity// Math. Ann., \textbf{309} (1997), № 4, 593 -- 609.

\bibitem{OV} {\it O. Martio, M. Vuorinen,} Whitney cubes, p-capacity, and Minkowski content// Exposition. Math., \textbf{5} (1987), № 1, 17 -- 40.

\bibitem{M} {\it P. Mattila,} Distribution of sets and measures along planes// J. London Math. Soc. (2), \textbf{38} (1988), № 1, 125 -- 132.

\bibitem{TN} {\it T. Nieminen,} Generalized mean porosity and dimension // Ann. Acad. Sci. Fenn. Math., \textbf{31} (2006), № 1, 143 -- 172.

\bibitem{PR} {\it F. Przytycki, S. Rohde,} Porosity of Collet-Eckmann Julia sets // Fund. Math., \textbf{155} (1998),  189 -- 199.

\bibitem{TR} {\it T. Rajala,} Large porosity and dimension of sets in metric spaces // Ann. Acad. Sci. Fenn. Math., \textbf{34} (2009), № 2, 565 -- 581.

\bibitem{Th} {\it B. S. Thomson,} Real Functions, Lecture Notes in Mathematics, \textbf{1170},
Springer-Verlag, Berlin, Heidelberg, New York, Tokyo, (1985).

\bibitem{Tk} {\it J. Tkadlec,} Construction of some non $\sigma$-porous sets on the real line // Real Anal. Exchange, \textbf{9} (1983/84),  473 -- 482.

\bibitem{MU} {\it M. Urba\'{n}ski,} Porosity in conformal iterated function systems // J. Number Theory, \textbf{88} (2001), № 2, 283 -- 312.

\bibitem{JV} {\it J. V\"{a}is\"{a}l\"{a},} Porous sets and quasisymmetric maps // Trans. Amer. Math. Soc., \textbf{299} (1987), 525 -- 533.

\bibitem{Y} {\it N. Yanagihara,} Angular cluster sets and horicyclic cluster sets // Proc. Japan. Acad., \textbf{45} (1969),  423 -- 428.

\bibitem{Yo1} {\it H. Yoshida,} Tangential boundary properties of arbitrary functions in the unit disc // Nagyoa Math. J, \textbf{46} (1972),  111 -- 120.

\bibitem{Yo2} {\it H. Yoshida,} On the boundary properties and the spherical derivatives of meromorphic functions in the unit disc // Math. Z., \textbf{132} (1973),  51 -- 68.

\bibitem{Z} {\it L. Zaji\v{c}ek,} On cluster sets of arbitrary functions // Fund. Math., \textbf{83} (1974),  197 -- 217.

\bibitem{Z3} {\it L. Zaji\v{c}ek,} Porosity and  $\sigma$-porosity // Real Anal. Exchange, \textbf{13} (1987/88),  314 -- 350.


\end{thebibliography}
\end{document}